\newcommand{\mmp}{\mathbb{P}}
\newcommand{\dod}{\overset{d}{\to}}
\newcommand{\tp}{\overset{\mathbb{P}}{\to}}
\newcommand{\me}{\mathbb{E}}
\newcommand{\mr}{\mathbb{R}}
\newcommand{\mn}{\mathbb{N}}
\newtheorem{thm}{Theorem}[section]
\newtheorem{lemma}[thm]{Lemma}
\newtheorem{cor}[thm]{Corollary}
\newtheorem{assertion}[thm]{Proposition}
\theoremstyle{definition}
\theoremstyle{remark}
\newtheorem{rem}[thm]{Remark}
\DeclareMathOperator{\1}{\mathbbm{1}}
\begin{document}
\title{Fractionally integrated inverse stable subordinators}

\date{\today}
\author[1]{Alexander Iksanov\thanks{E-mail: iksan@univ.kiev.ua}}
\affil[1]{Faculty of Cybernetics, Taras Shevchenko National University of Kyiv, 01601 Kyiv, Ukraine}
\author[2]{Zakhar Kabluchko\thanks{E-mail: zakhar.kabluchko@uni-muenster.de}}
\affil[2]{Institut f\"{u}r Mathematische Statistik, Westf\"{a}lische Wilhelms-Universit\"{a}t M\"{u}nster, 48149 M\"{u}nster, Germany}
\author[1,2]{Alexander Marynych\thanks{E-mail: marynych@unicyb.kiev.ua}}
\author[3]{Georgiy Shevchenko\thanks{E-mail: zhora@univ.kiev.ua}}
\affil[3]{Faculty of Mechanics and Mathematics, Taras Shevchenko National University
of Kyiv, 01601 Kyiv, Ukraine}

\maketitle

\begin{abstract}
A fractionally integrated inverse stable subordinator (FIISS) is
the convolution of a power function and an inverse stable
subordinator. We show that the FIISS is a scaling limit in the
Skorokhod space of a renewal shot noise process with heavy-tailed,
infinite mean `inter-shot' distribution and regularly varying
response function. We prove local H\"{o}lder continuity of FIISS
and a law of iterated logarithm for both small and large times.
\end{abstract}

\noindent
\emph{2010 Mathematics Subject Classification: }Primary                       60F17, 60G17      \\        
\hphantom{\emph{2010 Mathematics Subject Classification: }}Secondary          60G18               

\noindent \emph{Keywords}: H\"{o}lder continuity; inverse stable
subordinator; Lamperti representation; law of iterated logarithm;
renewal shot noise process; self-similarity; weak convergence in
the Skorokhod space

\section{Introduction}

\subsection{A brief survey of inverse stable subordinators}\label{W}

For $\alpha\in (0,1)$, let $(D_{\alpha}(t))_{t \geq 0}$ be an
$\alpha$-stable subordinator, i.e., an increasing L\'evy process,
with\footnote{We write $\Gamma(1-\alpha)t^\alpha$ rather than just
$t^\alpha$ to conform with the notation exploited in our previous
works.} $-\log \me e^{-t D_{\alpha}(1)} = \Gamma(1-\alpha)
t^\alpha$ for $t \geq 0$, where $\Gamma(\cdot)$ is Euler's gamma
function. Its generalized inverse
$W_\alpha:=(W_{\alpha}(u))_{u\in\mr}$ defined by
\begin{equation*}
W_{\alpha}(u)    ~:=~    \inf\{t \geq 0: D_{\alpha}(t)>u\}, \quad
u \geq 0
\end{equation*}
and $W_\alpha(u):=0$ for $u<0$, is called an {\it inverse
$\alpha$-stable subordinator}. Obviously, $W_\alpha$ has a.s.\
continuous and nondecreasing sample paths. Further, it is clear
that $W_\alpha$ is self-similar with index $\alpha$, i.e., the
finite-dimensional distributions of $(W_\alpha(cu))_{u\geq 0}$ for
fixed $c>0$ are the same as those of $(c^\alpha
W_\alpha(u))_{u\geq 0}$.

More specific properties of $W_\alpha$ include (local) H\"{o}lder
continuity with arbitrary exponent $\gamma<\alpha$ which is a
consequence of
\begin{equation}\label{sam}
M:=\sup_{0\leq v<u\leq 1/2}\,\frac{W_\alpha(u)-W_\alpha(v)}{
(u-v)^\alpha|\log (u-v)|^{1-\alpha}}<\infty\quad\text{a.s.}
\end{equation}
(Lemma 3.4 in \cite{Owada+Samorodnitsky:2014+}), a modulus
of continuity result
\begin{equation*}\label{modcont}
\underset{\delta\to 0+}{\lim}\,\underset{\substack{0\leq t\leq
1\\0<h<\delta}}{\sup}\,\frac{W_\alpha(t+h)-W_\alpha(t)}{
h^\alpha |\log h|^{1-\alpha}}=\frac{1}{
\Gamma(1-\alpha)\alpha^{2\alpha-1}(1-\alpha)^{1-\alpha}}
\quad\text{a.s.}
\end{equation*}
(formula (6) in \cite{Hawkes:1971}), and the law of iterated
logarithm
\begin{equation}\label{bert}
\lim\sup\,\frac{W_\alpha(u)}{u^\alpha(\log|\log
u|)^{1-\alpha}}=\frac{1}{
\Gamma(1-\alpha)\alpha^\alpha(1-\alpha)^{1-\alpha}}\quad\text{a.s.}
\end{equation}
both as $u\to 0+$ and $u\to+\infty$ which can be extracted from
Theorem 4.1 in \cite{Bertoin:1999}. For later needs, we note that
the random variable $M$ defined in \eqref{sam} satisfies
\begin{equation}\label{sam3}
\me e^{sM}<\infty
\end{equation}
for all $s>0$ (Lemma 3.4 in \cite{Owada+Samorodnitsky:2014+}).

Denote by $D[0,\infty)$ and $D(0,\infty)$ the Skorokhod spaces of
right-continuous real-valued functions which are defined on
$[0,\infty)$ and $(0,\infty)$, respectively, and have finite
limits from the left at each positive point. Elements of these
spaces are sometimes called {\it c\`{a}dl\`{a}g} functions.
Throughout the paper, weak convergence on $D[0,\infty)$ or
$D(0,\infty)$ endowed with the well-known $J_1$-topology is
denoted by $\Rightarrow$. See \cite{Billingsley:1999, Whitt:2002}
for a comprehensive account on the $J_1$-topology.

Let $\xi_1$, $\xi_2,\ldots$ be a sequence of independent copies of
a positive random variable $\xi$. Denote by $(S_n)_{n\in\mn_0}$,
where $\mn_0:=\mn\cup\{0\}$, the zero-delayed standard random walk
with jumps $\xi_k$, i.e., $S_0:=0$ and $S_n:=\xi_1+\ldots+\xi_n$
for $n\in\mn$. The corresponding first-passage time process is
defined by
$$\nu(t):=\inf\{k\in\mn_0: S_k>t\},\quad t\in\mr.$$ Note that
$\nu(t)=0$ for $t<0$.

Assume that
\begin{equation}\label{1}
\mmp\{\xi>t\}\quad\sim\quad t^{-\alpha}\ell(t),\quad t\to\infty
\end{equation}
for some $\alpha\in (0,1)$ and some $\ell$ slowly varying at $\infty$. Then, according to Corollary 3.4 in \cite{Meerschaert+Scheffler:2004},
\begin{equation}\label{flt_nu}
\mmp\{\xi>t\}\nu(ut)\quad\Rightarrow\quad W_\alpha(u),\quad t\to\infty
\end{equation}
on $D[0,\infty)$.

In the recent years inverse stable subordinators, also known as
Mittag-Leffler processes\footnote{The terminology stems from the
fact that, for any fixed $u>0$, the random variable $W_\alpha(u)$
has a Mittag-Leffler distribution with parameter $\alpha$, see
Section \ref{Lamp} below.}, have become a popular object of
research, both from the theoretical and applied viewpoints.
Relation \eqref{flt_nu} which tells us that the processes
$W_\alpha$ are scaling limits of the first-passage time processes
with heavy-tailed waiting times underlies the ubiquity of inverse stable subordinators in a heavy-tailed world. For instance, inverse stable subordinators are often used as a time-change of the subordinated processes intended to model heavy-tailed phenomena. 
The most prominent example of this kind is a scaling limit for
continuous-time random walks  with heavy-tailed waiting times
\cite{Meerschaert+Scheffler:2004,Meerschaert+Straka:2013}. In the
simplest situation, the scaling limit takes the form
$S(W_{\alpha}(\cdot))$, where $S(\cdot)$ is a $\gamma$-stable
process with $0<\gamma\leq 2$. The special case $\gamma=2$ appears
in many problems related to the {\it anomalous (or fractional)
diffusion} and has attracted considerable attention in both
physics \cite{Magdziarz+Weron:2011,Stanislavsky+Weron+Weron:2008}
and mathematics literature
\cite{Baeumer+Meerschaert+Nane:2009,Magdziarz+Schilling:2015,Nane:2009}.
More general subordinated processes $X(W_{\alpha}(\cdot))$, with
$X$ being a Markov process, can be used to construct solutions to
fractional partial differential equations
\cite{Meerschaert+Benson+Scheffler+Baeumer:2002,Meerschaert+Nane+Vellaisamy:2009}.
Also, inverse stable subordinators play an important role in the
analysis of (a) stationary infinitely divisible processes
generated by conservative flows
\cite{Jung+Owada+Samorodnitsky:2016+,Owada+Samorodnitsky:2014+}
and (b) asymptotics of convolutions of certain (explicitly given)
functions and rescaled continuous-time random walks
\cite{Scalas+Viles:2014}. In (a) and (b), the limit processes are
convolutions involving inverse stable subordinators and, as such,
are close relatives of processes $Y_{\alpha,\,\beta}$ to be
introduced below.

\subsection{Definition and known properties of fractionally integrated inverse stable
subordinators}\label{properties}

In this section we define the processes which are in focus in the
present paper and review some of their known properties.

For $\beta\in\mr$, set
\begin{equation*}
Y_{\alpha,\,\beta}(0):=0,\quad Y_{\alpha,\,\beta}(u) :=
\int_{[0,\,u]} (u-y)^\beta {\rm d}W_\alpha(y), \quad u>0.
\end{equation*}
Since the integrator $W_\alpha$ has nondecreasing paths, the
integral exists as a pathwise Lebesgue-Stieltjes integral.
Proposition \ref{finite_lemma} below shows that
$Y_{\alpha,\,\beta}(u)<\infty$ a.s.\ for each fixed $u>0$.
Following \cite{Iksanov:2013} and
\cite{Iksanov+Marynych+Meiners:2014}, we call
$Y_{\alpha,\,\beta}:=(Y_{\alpha,\,\beta}(u))_{u\geq 0}$ {\it
fractionally integrated inverse $\alpha$-stable subordinator}.

In \cite{Iksanov:2013}, it was shown that the processes
$Y_{\alpha,\,\beta}$ with $\beta\geq 0$ are scaling limits in the
Skorokhod space of renewal shot noise processes with eventually
nondecreasing regularly varying response functions and
heavy-tailed `inter-shot' distributions of infinite mean.
According to Theorem 2.9 in \cite{Iksanov+Marynych+Meiners:2014},
in the case when $\beta\in [-\alpha, 0]$ (and the response
functions are eventually nonincreasing) a similar statement holds
in the sense of weak convergence of  finite-dimensional
distributions. More exotic processes involving $Y_{\alpha,\,\beta}$ arise as scaling limits for random processes with immigration which are renewal shot noise processes with {\it random} response functions (see \cite{Iksanov+Marynych+Meiners:2016} for the precise definition). In Proposition 2.2 of \cite{Iksanov+Marynych+Meiners:2016}, the limit is a conditionally 
Gaussian process with conditional variance $Y_{\alpha,\,\beta}$. 

We shall use the representations
\begin{equation}\label{repr1}
Y_{\alpha,\,\beta}(u)=\beta\int_0^u (u-y)^{\beta-1}W_\alpha(y){\rm
d}y,\quad u>0
\end{equation}
when $\beta > 0$ and
\begin{eqnarray}\label{repr2}
Y_{\alpha,\,\beta}(u)&=&u^\beta
W_\alpha(u)+|\beta|\int_0^u(W_\alpha(u)-W_\alpha(u-y))y^{\beta-1}{\rm
d}y\notag\\&=&|\beta|\int_0^\infty
(W_\alpha(u)-W_\alpha(u-y))y^{\beta-1}{\rm d}y,\quad u>0
\end{eqnarray}
when $-\alpha<\beta<0$. These show that $Y_{\alpha,\,\beta}$ is
nothing else but the {\it Riemann-Liouville fractional integral}
(up to a multiplicative constant) of $W_\alpha$ in the first case
and the {\it Marchaud fractional derivative} of $W_\alpha$ in the
second (see p.~33 and p.~111 in
\cite{Samko+Kilbas+Marichev:1993}).

Here are some known properties of $Y_{\alpha,\,\beta}$.

\begin{itemize}
\item[(I)]
$Y_{\alpha,\,\beta}(u)<\infty$ a.s.\ for each $u>0$ (the case
$\beta\geq 0$ is trivial; the case $\beta\in(-\alpha,0)$ is
covered by Lemma 2.14 in \cite{Iksanov+Marynych+Meiners:2013}; for
arbitrary $\beta$, see Proposition \ref{finite_lemma} below).
\item[(II)] $Y_{\alpha,\,\beta}$ is a.s.\ continuous whenever $\beta>-\alpha$ (see p.~1993 in \cite{Iksanov:2013}
for the case $\beta \geq 0$ and Proposition 2.18 in
\cite{Iksanov+Marynych+Meiners:2013} for the case $\beta\in
(-\alpha, 0)$). In the case when $\beta\leq -\alpha$ the
probability that $Y_{\alpha,\,\beta}$ is unbounded on a given
interval is strictly positive (see the proof of Proposition 2.7 in
\cite{Iksanov+Marynych+Meiners:2016} for the case $\beta=-\alpha$;
although an extension to the case $\beta<-\alpha$ is
straightforward, it is discussed in the proof of Proposition
\ref{finite_lemma} below for the sake of completeness).  
\item[(III)] The increments of $Y_{\alpha,\,\beta}$ are neither independent nor stationary (see p.~1994 in \cite{Iksanov:2013} and Proposition 2.16 in \cite{Iksanov+Marynych+Meiners:2013}).
\item[(IV)]
$Y_{\alpha,\,\beta}$ is self-similar with index $\alpha+\beta$ (even though this can be easily checked, we state this observation as Proposition \ref{simil} for ease of reference).
\end{itemize}

Three realizations of inverse $3/4$-stable subordinators together
with the corresponding fractionally integrated inverse
$3/4$-stable subordinators for different $\beta$ are shown on
Figure \ref{FIISS_fig1}.

\begin{figure}[!ht]
\begin{center}
\includegraphics[scale=0.5]{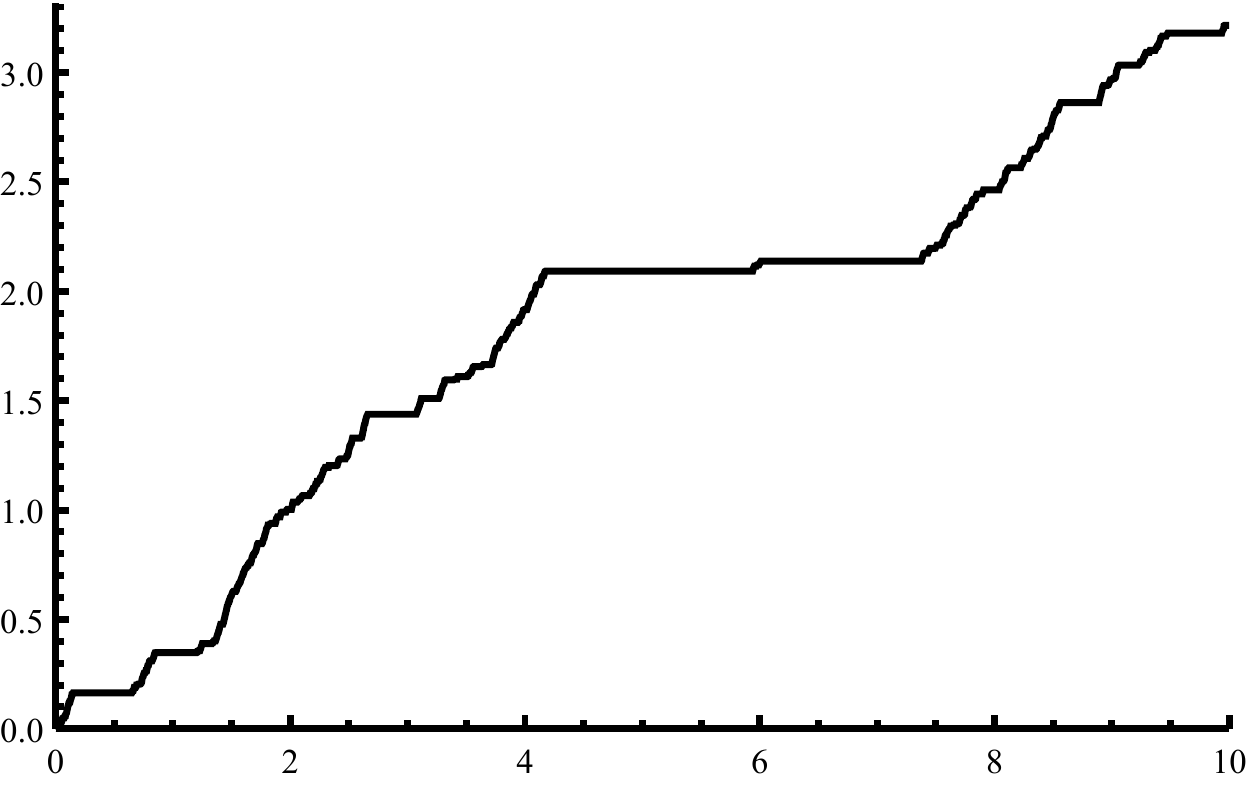}\hspace{1cm}\includegraphics[scale=0.5]{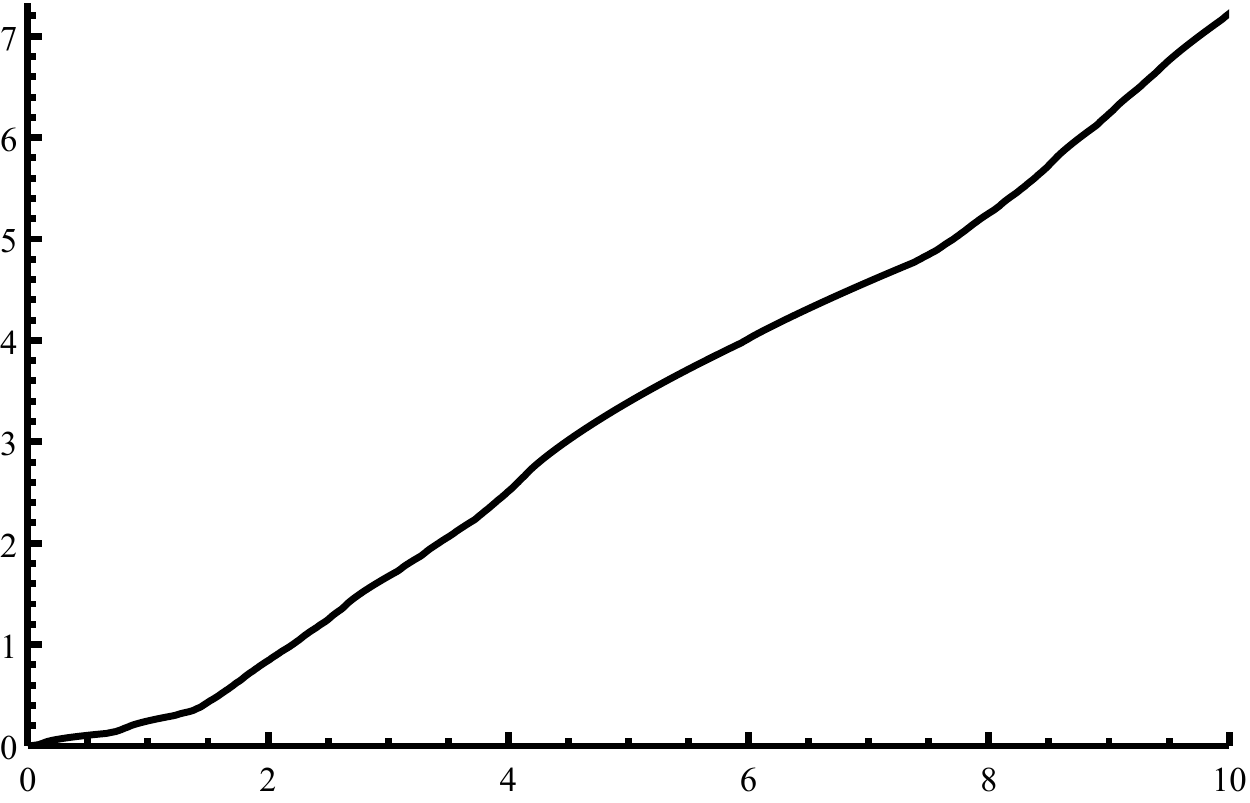}\\
$\alpha=0.75$ and $\beta=0.5$\\
\includegraphics[scale=0.5]{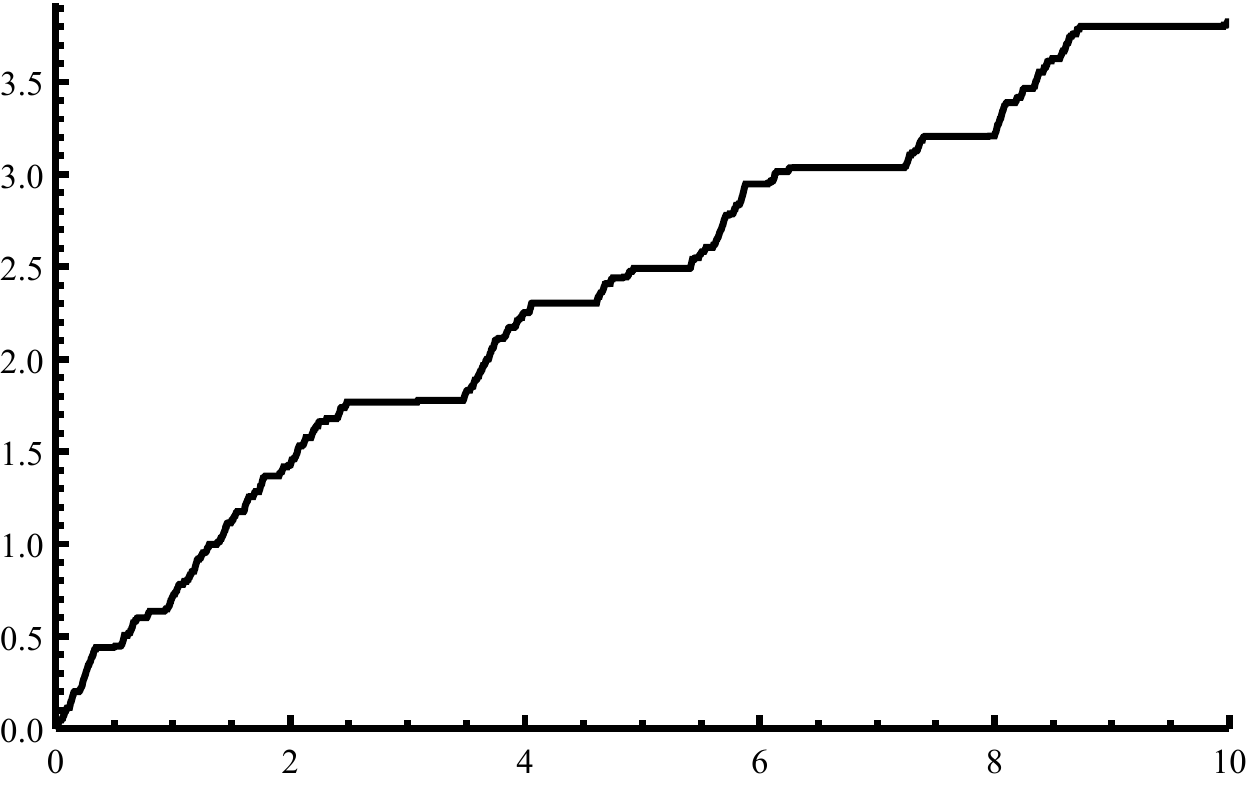}\hspace{1cm}\includegraphics[scale=0.5]{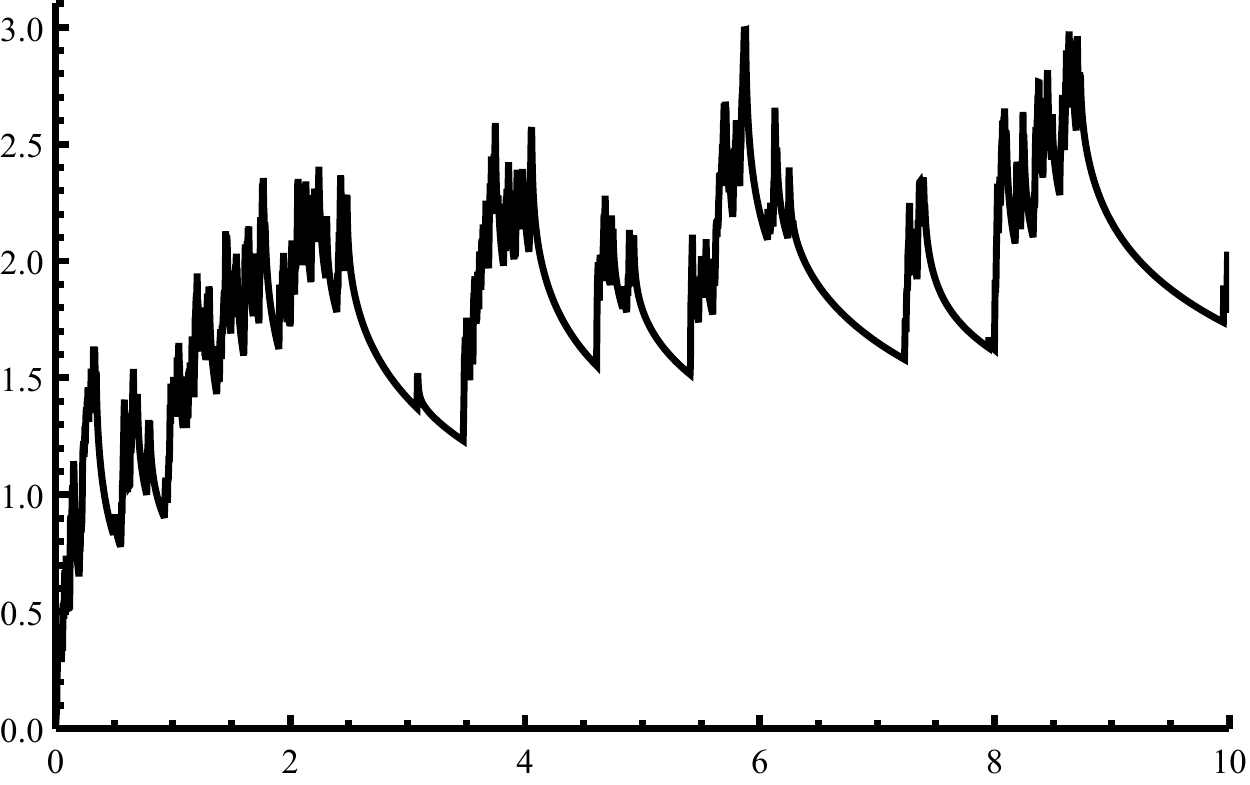}\\
$\alpha=0.75$ and $\beta=-0.5$\\
\includegraphics[scale=0.5]{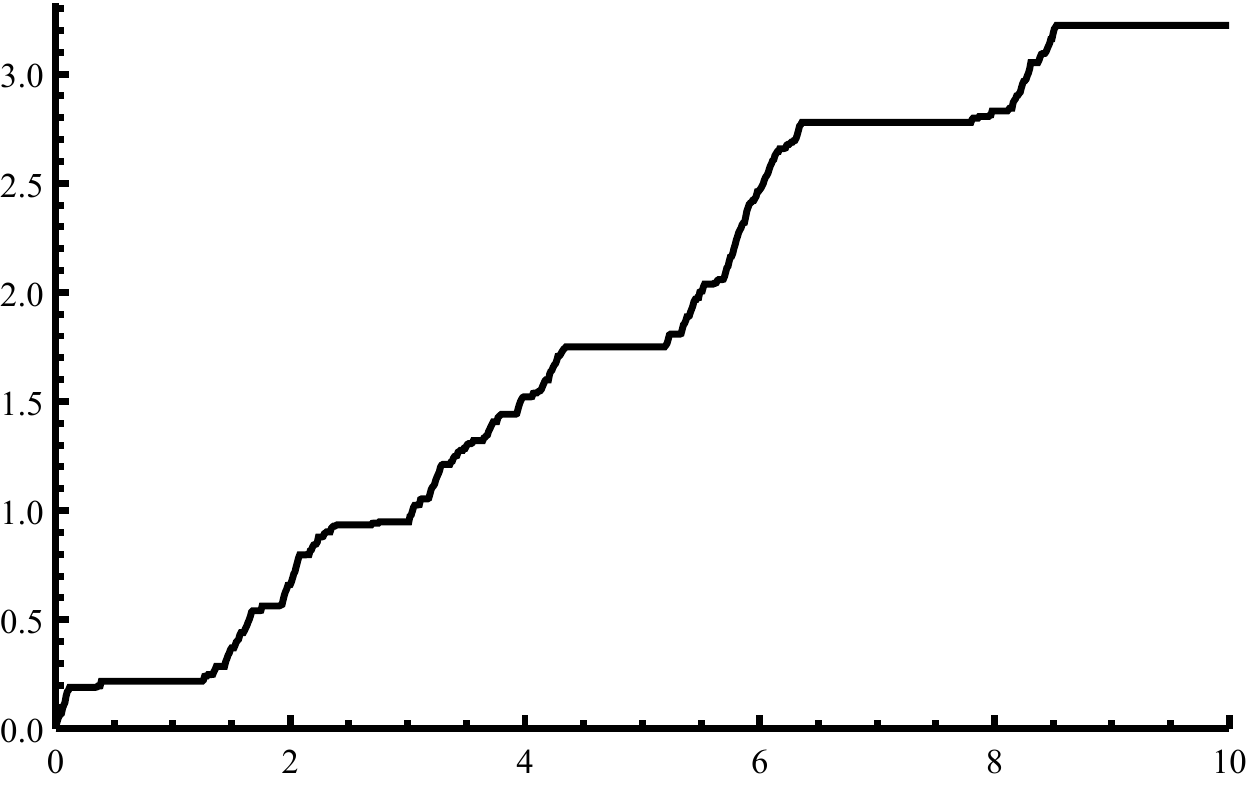}\hspace{1cm}\includegraphics[scale=0.5]{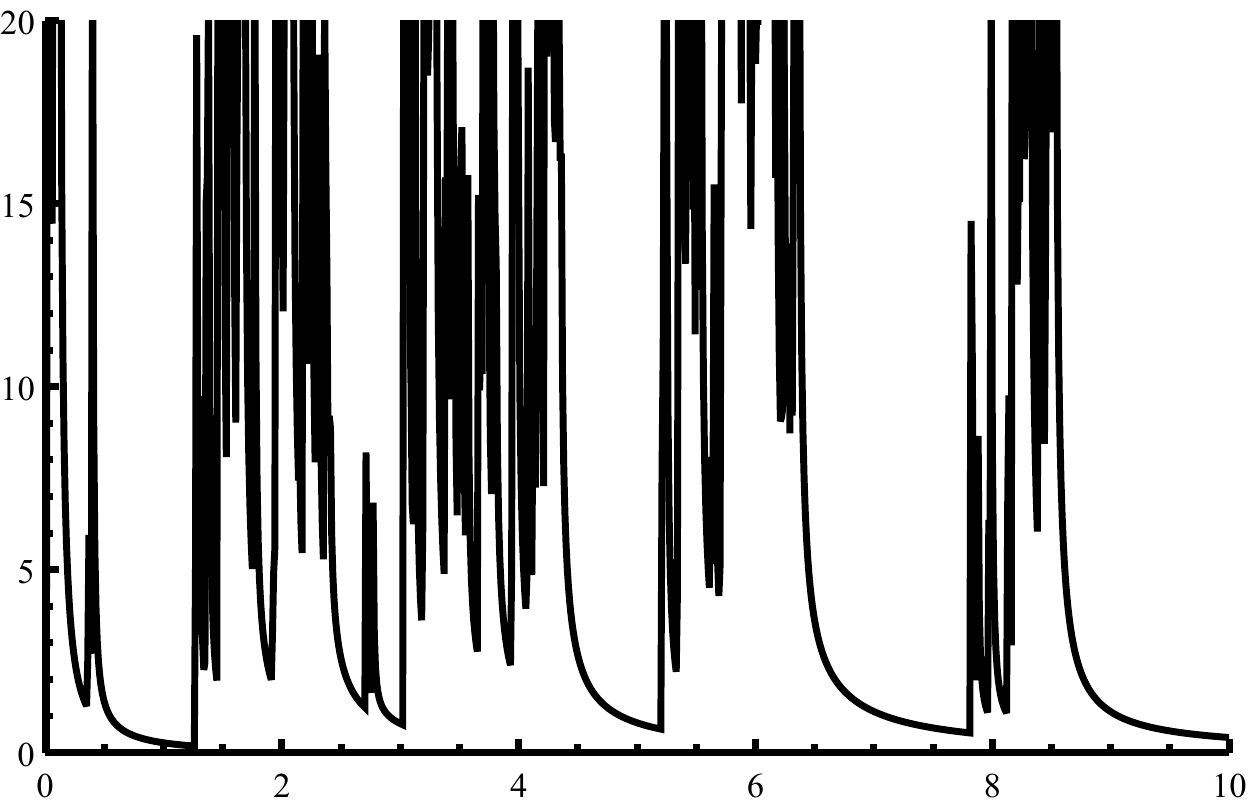}\\
$\alpha=0.75$ and $\beta=-1.5$\\
\end{center}
\caption{Inverse stable subordinators (left) and the corresponding FIISS (right)}
\label{FIISS_fig1}
\end{figure}

The rest of the paper is structured as follows. Main results are formulated in Section \ref{main}. Theorem \ref{xxx} states
that fractionally integrated stable subordinators $Y_{\alpha,\,\beta}$ for $\beta>-\alpha$ are scaling limits
in the Skorokhod space of certain renewal shot noise processes with heavy-tailed `inter-shot' distributions. Since the renewal shot noise processes
are extensively used in diverse areas of applied mathematics, the processes $Y_{\alpha,\,\beta}$, as their limits, may be useful
for heavy-tailed modeling. The paths of $Y_{\alpha,\,\beta}$ for $\beta\leq -\alpha$ are ill-behaved (see Proposition \ref{finite_lemma}). Hence,
the convergence of finite-dimensional distributions provided by Theorem \ref{fin-dim} is the best possible result in this case. The other main results of the paper are concerned with sample path properties of $Y_{\alpha,\,\beta}$. Theorem \ref{holder} is a H\"{o}lder-type result which generalizes \eqref{sam}. Theorem \ref{logar} is the law of iterated logarithm for both small and large times which generalizes \eqref{bert}. In Section \ref{Lamp} we show that $Y_{\alpha,\,\beta}(1)$ has the same distribution as the exponential functional of a killed subordinator by exploiting the Lamperti representation \cite{Lamperti:1972} of semi-stable processes. The main results are proved in Sections \ref{proofxxx}, \ref{proof_holder} and \ref{proof_logar}. The Appendix collects several auxiliary results.

\section{Main results}\label{main}

\subsection{Fractionally integrated inverse stable subordinators as scaling limits of renewal shot noise processes}

Below we shall use the notation introduced in Section \ref{W}.

For a c\`{a}dl\`{a}g function $h$, define $$X(t):=\sum_{k\geq 0}h(t-S_k)\1_{\{S_k\leq t\}}=\int_{[0,\,t]}h(t-y){\rm d}\nu(y),\quad t\geq 0.$$ The process $(X(t))_{t\geq 0}$ is called {\it renewal shot noise process} with response function $h$. There has been an outbreak of recent activity around weak convergence of renewal shot noise processes and their generalizations called {\it random processes with immigration}, see \cite{Alsmeyer+Iksanov+Marynych:2016, Iksanov:2013, Iksanov+Kabluchko+Marynych:2016, Iksanov+Marynych+Meiners:2014, Iksanov+Marynych+Meiners:2016, Iksanov+Marynych+Meiners:2016b}. Both renewal shot noise processes and random processes with immigration are ubiquitous in applied mathematics. Many relevant references can be traced via the last cited articles.
\begin{thm}\label{xxx}
Assume that $\mmp\{\xi>t\}\sim t^{-\alpha}\ell(t)$ for some
$\alpha\in (0,1)$ and some $\ell$ slowly varying at $\infty$.
Let $h:[0,\infty)\to [0,\infty)$ be a right-continuous monotone function that satisfies 
$h(t)\sim t^\beta\widehat{\ell}(t)$ for some $\beta>-\alpha$ and
some $\widehat{\ell}$ slowly varying at $\infty$. Then, as
$t\to\infty$, $$\frac{\mmp\{\xi>t\}}{h(t)}\sum_{k\geq
0}h(ut-S_k)\1_{\{S_k\leq ut\}}\quad \Rightarrow\quad
Y_{\alpha,\,\beta}(u)$$ on $D(0,\infty)$.
\end{thm}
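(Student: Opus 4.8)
The natural approach is to combine the known functional limit theorem \eqref{flt_nu} for the first-passage process with a continuous-mapping-type argument, handling separately the contribution of large and small lags in the shot-noise sum. Write $a(t):=\mmp\{\xi>t\}$ and $X_t(u):=\frac{a(t)}{h(t)}\int_{[0,\,ut]}h(ut-y){\rm d}\nu(y)$. Changing variables $y=ts$ gives $X_t(u)=\frac{a(t)}{h(t)}\int_{[0,\,u]}h(t(u-s)){\rm d}\nu(ts)$, and the candidate limit, after an analogous change of variables in the definition of $Y_{\alpha,\,\beta}$, is $\int_{[0,\,u]}(u-s)^\beta{\rm d}W_\alpha(s)$. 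So heuristically one wants to pass to the limit in the integrand using $h(t(u-s))/h(t)\to(u-s)^\beta$ (uniformity on compacts via the uniform convergence theorem for regularly varying functions, valid away from $s=u$) and in the integrator using \eqref{flt_nu}, i.e. $a(t)\nu(ts)\Rightarrow W_\alpha(s)$ on $D[0,\infty)$.

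The plan is as follows. First, by the Skorokhod representation theorem, realize the convergence in \eqref{flt_nu} almost surely on a common probability space, so that $a(t)\nu(t\,\cdot)\to W_\alpha(\cdot)$ in $D[0,\infty)$ (hence locally uniformly, since $W_\alpha$ is continuous). Second, fix $\delta\in(0,u)$ and split $X_t(u)=X_t^{(1)}(u)+X_t^{(2)}(u)$, the integrals over $[0,u-\delta]$ and $(u-\delta,u]$ respectively. On $[0,u-\delta]$ the factor $h(t(u-s))/h(t)$ converges uniformly to the bounded continuous function $(u-s)^\beta$; combined with the a.s. uniform convergence of the integrators and an integration-by-parts (so that the integral becomes a Riemann–Stieltjes integral against the uniformly convergent process $a(t)\nu(t\,\cdot)$ rather than against its increments), one gets $X_t^{(1)}(u)\to\int_{[0,\,u-\delta]}(u-s)^\beta{\rm d}W_\alpha(s)$ a.s., for each fixed $u$ and $\delta$. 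As $\delta\downarrow0$ this tends to $Y_{\alpha,\,\beta}(u)$. Third, control the boundary term $X_t^{(2)}(u)$ uniformly in $t$: one must show $\limsup_{\delta\to0}\limsup_{t\to\infty}\me[X_t^{(2)}(u)\wedge 1]=0$ (or an a.s. analogue), which amounts to an estimate of the form $\frac{a(t)}{h(t)}\int_{(u-\delta,u]}h(t(u-s)){\rm d}\nu(ts)$ being small. Using monotonicity of $h$ together with Markov's inequality and the elementary renewal-type bound $\me\,\nu(t)\le C/a(t)$ (via $\me[\nu(t)]=\sum_{n\ge0}\mmp\{S_n\le t\}$ and the strong renewal theorem / Erickson's estimate under \eqref{1}), one bounds the $\delta$-block by something like $C\,\delta^{\alpha+\beta}$ times slowly varying corrections, which vanishes since $\alpha+\beta>0$. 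That last inequality is exactly the place where the hypothesis $\beta>-\alpha$ is used.

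Finally, one upgrades pointwise convergence (convergence of one-dimensional, then finite-dimensional, distributions — the latter by the same splitting argument applied coordinatewise, or by noting the map $W\mapsto(u\mapsto\int_{[0,u]}(u-s)^\beta{\rm d}W(s))$ is continuous on the relevant subset of $D[0,\infty)$) to weak convergence in $D(0,\infty)$ by establishing tightness. For tightness in $D(0,\infty)$ it suffices to work on each compact subinterval $[\varepsilon,T]\subset(0,\infty)$; since the prelimit processes are monotone on $[\varepsilon,T]$ when $h$ is monotone and $t$ is large (the regular-variation asymptotics force eventual monotonicity of the dominant behaviour), one can invoke the simple tightness criterion for monotone processes in $D$ — convergence of finite-dimensional distributions plus continuity of the limit yields $J_1$-convergence — or alternatively verify the modulus-of-continuity condition directly using the bound from the previous step. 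The main obstacle, and the step deserving the most care, is the uniform (in $t$) smallness of the boundary block $X_t^{(2)}(u)$ together with the interchange of limits $\delta\downarrow0$ and $t\to\infty$; this is where the regular variation of both $\mmp\{\xi>\cdot\}$ and $h$, the bound on $\me\,\nu(t)$, and the constraint $\beta>-\alpha$ all enter, and it is the analytic heart of the proof. The monotonicity assumption on $h$ is what makes these estimates clean; without it one would need a more delicate decomposition into monotone pieces.
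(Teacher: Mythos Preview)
Your overall architecture---split off a boundary layer near $s=u$, use Skorokhod representation plus the uniform convergence theorem on the bulk, then control the boundary and upgrade to $J_1$-convergence---matches the paper's approach. The gap is in the last two steps, precisely in the case $\beta\in(-\alpha,0)$ (nonincreasing $h$), which is the only new case here since $\beta\ge0$ was already handled in \cite{Iksanov:2013}.

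First, your tightness argument is incorrect: when $h$ is nonincreasing the prelimit process $u\mapsto\sum_{k\ge0}h(ut-S_k)\1_{\{S_k\le ut\}}$ is \emph{not} monotone in $u$. Between renewal epochs each summand decreases, while at each epoch $S_k/t$ the process jumps up by $h(0)$; the result is a sawtooth, and no rescaling makes it monotone. So the ``simple tightness criterion for monotone processes'' is unavailable exactly where you need it.

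Second, and relatedly, the boundary control you sketch via $\me\,\nu(t)\le C/a(t)$ and Markov's inequality may give, after some work, a bound for \emph{fixed} $u$, but the functional statement requires the uniform estimate
\[
\lim_{\delta\to0}\,\limsup_{t\to\infty}\,\mmp\Big\{\sup_{u\in[a,b]}\frac{a(t)}{h(t)}\sum_{k\ge0}h(ut-S_k)\1_{\{(u-\delta)t<S_k\le ut\}}>\theta\Big\}=0,
\]
and this does not follow from a pointwise expectation bound. The paper obtains it via a genuinely additional ingredient: a modulus-of-continuity estimate for the rescaled first-passage process (Proposition~\ref{renewal_modulus_of_continuity}), namely that $\sup\,a(t)(\nu(ut)-\nu(vt))/(u-v)^{\alpha-\delta}$ over pairs with $u-v\ge1/t$ is tight, proved by a chaining argument together with a uniform exponential-moment bound $\sup_{t\ge0}\me\,e^{\lambda a(t)\nu(t)}<\infty$. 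This, combined with Potter's bound and an integration by parts, is what delivers the uniform boundary control. Your proposal identifies the right obstacle but does not supply the tool needed to overcome it.
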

\begin{rem}\label{eventually}
In the case $\beta\geq 0$, Theorem \ref{xxx} was proved in
\cite{Iksanov:2013} under weaker assumptions that
$h:[0,\infty)\to\mr$ is c\`{a}dl\`{a}g, {\it eventually}
nondecreasing and regularly varying. In Section \ref{proofxxx} we
shall show that, in the case $\beta\leq 0$, Theorem \ref{xxx}
holds whenever $h:[0,\infty)\to\mr$ is c\`{a}dl\`{a}g, {\it
eventually} nonincreasing and regularly varying.
\end{rem}

Recall that convergence to a continuous limit in $D(0,\infty)$
equipped with the $J_1$-topology is equivalent to uniform
convergence on $[a,b]$ for any finite positive $a$ and $b$, $a<b$.
Since the limit process in Theorem \ref{xxx} has a.s.\ continuous sample paths 
we obtain the following
\begin{cor}
Under the assumptions of Theorem \ref{xxx}, as $t\to\infty$,
$$\frac{\mmp\{\xi>t\}}{h(t)}\underset{u\in [a,\,b]}{\sup}\,\sum_{k\geq
0}h(ut-S_k)\1_{\{S_k\leq ut\}}\quad\dod\quad \underset{u\in
[a,\,b]}{\sup}\,Y_{\alpha,\,\beta}(u)$$ for any finite positive $a$ and $b$,
$a<b$.
\end{cor}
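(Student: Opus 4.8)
The plan is to obtain the corollary as an immediate consequence of Theorem~\ref{xxx} together with the continuous mapping theorem. Fix finite reals $0<a<b$ and write $X_t(u):=\frac{\mmp\{\xi>t\}}{h(t)}\sum_{k\geq 0}h(ut-S_k)\1_{\{S_k\leq ut\}}$ for the process on the left-hand side. By Theorem~\ref{xxx}, $X_t\Rightarrow Y_{\alpha,\,\beta}$ on $D(0,\infty)$ with the $J_1$-topology, and, by property~(II) (here $\beta>-\alpha$), the limit process $Y_{\alpha,\,\beta}$ has a.s.\ continuous sample paths. As recalled just before the statement, $J_1$-convergence to a continuous limit in $D(0,\infty)$ is equivalent to uniform convergence on every compact subinterval; consequently $X_t\Rightarrow Y_{\alpha,\,\beta}$ also with respect to the topology of locally uniform convergence on $(0,\infty)$, and in particular the restrictions of $X_t$ to $[a,b]$ converge weakly to the restriction of $Y_{\alpha,\,\beta}$ in the space $C[a,b]$ of continuous functions on $[a,b]$ endowed with the supremum norm.

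It then remains to apply the continuous mapping theorem to the functional $\Phi\colon C[a,b]\to\mr$ defined by $\Phi(f):=\sup_{u\in[a,\,b]}f(u)$. This functional is $1$-Lipschitz with respect to the supremum norm, since $|\Phi(f)-\Phi(g)|\leq\sup_{u\in[a,\,b]}|f(u)-g(u)|$, hence continuous. Therefore $\Phi(X_t|_{[a,b]})\dod\Phi(Y_{\alpha,\,\beta}|_{[a,b]})$ by the continuous mapping theorem (see, e.g., \cite{Billingsley:1999}), which is exactly the claimed convergence. Equivalently, one may argue via the Skorokhod representation theorem: on a common probability space one can realize copies $\widetilde{X}_t\od X_t$ and $\widetilde{Y}\od Y_{\alpha,\,\beta}$ with $\widetilde{X}_t\to\widetilde{Y}$ a.s.\ in $D(0,\infty)$; by continuity of $\widetilde{Y}$ this convergence is uniform on $[a,b]$, so $\sup_{u\in[a,\,b]}\widetilde{X}_t(u)\to\sup_{u\in[a,\,b]}\widetilde{Y}(u)$ almost surely, and the assertion follows.

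There is no real obstacle in this argument: the substantive content is entirely contained in Theorem~\ref{xxx}, and the only points to check are the a.s.\ continuity of the limit (already available as property~(II)) and the elementary continuity of the supremum functional on $C[a,b]$.
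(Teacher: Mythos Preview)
Your argument is correct and follows exactly the paper's reasoning, which simply observes that $J_1$-convergence to a continuous limit entails locally uniform convergence and then invokes the continuity of the supremum functional. One minor quibble: the prelimit processes $X_t$ are c\`adl\`ag rather than continuous, so their restrictions need not lie in $C[a,b]$; your Skorokhod-representation variant (or, equivalently, applying the continuous mapping theorem directly on $D[a,b]$, where the supremum functional is continuous at every $f\in C[a,b]$) avoids this slip.
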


Extremal behavior of shot-noise processes has attracted
considerable attention in the literature, see
\cite{Doney+Obrien:1991, Homble+McCormick:1995,
Hsing+Teugels:1989, Lebedev:2002,
McCormick:1997,McCormick+Seymour:2001}. However, assumptions of
the cited papers were other than ours.

By Proposition \ref{finite_lemma} below, the paths of
$Y_{\alpha,\beta}$ for $\beta\leq -\alpha$ do not belong to the
space $D(0,\infty)$. Although this shows that the classical
functional limit theorem cannot hold, we still have the
convergence of finite-dimensional distributions.
\begin{thm}\label{fin-dim}
Under the same assumptions as in Theorem \ref{xxx} but with
arbitrary $\beta \in\mr$ we have, as $t\to\infty$,
\begin{equation*}
\frac{\mmp\{\xi>t\}}{h(t)}\bigg(\sum_{k\geq
0}h(u_1t-S_k)\1_{\{S_k\leq u_1t\}},\ldots, \sum_{k\geq
0}h(u_nt-S_k)\1_{\{S_k\leq u_nt\}}\bigg)\dod (Y_{\alpha,\,\beta}(u_1),\ldots, Y_{\alpha,\,\beta}(u_n))
\end{equation*}
for any $n\in\mn$ and any $0<u_1<\ldots<u_n<\infty$.
\end{thm}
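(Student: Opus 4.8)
The plan is to derive the statement from the functional limit theorem \eqref{flt_nu} by a truncation argument. Since $J_1$-convergence to an a.s.\ continuous limit implies convergence of finite-dimensional distributions, the case $\beta>-\alpha$ is contained in Theorem \ref{xxx} ($Y_{\alpha,\beta}$ being a.s.\ continuous there), so it remains to treat $\beta\le-\alpha$. For such $\beta$ one has $\beta<0$, hence $h$ is nonincreasing, bounded and strictly positive on $[0,\infty)$. Write $a(t):=\mmp\{\xi>t\}$, $X(t):=\sum_{k\ge0}h(t-S_k)\1_{\{S_k\le t\}}$, and for $0<\epsilon<\min_{1\le j\le n}u_j$ split $X(ut)=X_\epsilon(ut)+R_\epsilon(ut)$, where
$$X_\epsilon(ut):=\sum_{k\,:\,S_k\le(u-\epsilon)t}h(ut-S_k),\qquad R_\epsilon(ut):=\sum_{k\,:\,(u-\epsilon)t<S_k\le ut}h(ut-S_k)\ge0$$
collect the ``old'' and the ``recent'' shots, respectively. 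A change of variable turns the rescaled old part into $\frac{a(t)}{h(t)}X_\epsilon(ut)=\int_{[0,\,u-\epsilon]}\frac{h(t(u-s))}{h(t)}\,{\rm d}\big(a(t)\nu(ts)\big)$.

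For a fixed $\epsilon$, I would first prove that
$$\big(\tfrac{a(t)}{h(t)}X_\epsilon(u_1t),\ldots,\tfrac{a(t)}{h(t)}X_\epsilon(u_nt)\big)\ \dod\ \big(Y^{(\epsilon)}_{\alpha,\beta}(u_1),\ldots,Y^{(\epsilon)}_{\alpha,\beta}(u_n)\big),\qquad t\to\infty,$$
where $Y^{(\epsilon)}_{\alpha,\beta}(u):=\int_{[0,\,u-\epsilon]}(u-s)^\beta{\rm d}W_\alpha(s)$. Using the Skorokhod representation theorem I may assume that the convergence in \eqref{flt_nu} holds a.s., and then, $W_\alpha$ being continuous, $a(t)\nu(t\cdot)\to W_\alpha(\cdot)$ a.s.\ uniformly on compact sets. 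On each $[0,u_j-\epsilon]$ the deterministic integrand $h(t(u_j-s))/h(t)$ converges uniformly in $s$ to $(u_j-s)^\beta$ by the uniform convergence theorem for regularly varying functions, and $s\mapsto(u_j-s)^\beta$ is continuous and of bounded variation there; an integration by parts then yields a.s.\ coordinatewise convergence, hence a.s.\ convergence of the vectors, and in particular convergence in distribution.

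Two approximations remain. First, as $\epsilon\downarrow0$, $Y^{(\epsilon)}_{\alpha,\beta}(u)\to Y_{\alpha,\beta}(u)$ a.s.; the limit is finite by Proposition \ref{finite_lemma} — in fact, for a fixed $u$ the path $W_\alpha$ is a.s.\ constant on a random left neighbourhood of $u$, so the discarded integral over $(u-\epsilon,u]$ vanishes once $\epsilon$ is small — and jointly the a.s.\ limit is $(Y_{\alpha,\beta}(u_1),\ldots,Y_{\alpha,\beta}(u_n))$. Secondly, the remainder is asymptotically negligible: since $h>0$, the event $\{R_\epsilon(ut)>0\}$ is precisely the event that $((u-\epsilon)t,ut]$ contains a renewal epoch, i.e.\ that the age $ut-S_{\nu(ut)-1}$ is smaller than $\epsilon t$, so that for every $\delta>0$
$$\mmp\big\{\tfrac{a(t)}{h(t)}R_\epsilon(u_jt)>\delta\ \text{for some}\ j\le n\big\}\ \le\ \sum_{j=1}^{n}\mmp\big\{u_jt-S_{\nu(u_jt)-1}<\epsilon t\big\};$$
as the steps $\xi_k$ are heavy-tailed with infinite mean, the age at time $u_jt$ is of order $t$ (by the Dynkin--Lamperti arcsine theorem $(u_jt-S_{\nu(u_jt)-1})/t$ converges in distribution to an a.s.\ positive random variable), so the right-hand side has $\limsup_{t\to\infty}$ tending to $0$ as $\epsilon\downarrow0$. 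Inserting these three facts into the standard approximation lemma (e.g.\ Theorem 3.2 in \cite{Billingsley:1999}) finishes the proof.

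I expect the last point to be the crux. A naive estimate of $R_\epsilon$ through its mean breaks down for $\beta\le-1$, where $\me Y_{\alpha,\beta}(u)=+\infty$; what rescues the argument is that the recent part is nonzero only on the event that the (typically order-$t$) age at time $u_jt$ is atypically small, whose probability tends to $0$ as $\epsilon\downarrow0$ uniformly in large $t$.
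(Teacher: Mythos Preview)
Your proposal is correct and follows essentially the same route as the paper: truncate the shot noise at distance $\epsilon$ from the singularity, pass to the limit in the truncated part via Skorokhod representation of \eqref{flt_nu} together with the uniform convergence theorem for regularly varying functions, let the truncated limit $Y^{(\epsilon)}_{\alpha,\beta}(u)$ tend to $Y_{\alpha,\beta}(u)$ by Proposition \ref{finite_lemma}, and kill the remainder by bounding $\mmp\{R_\epsilon(ut)>0\}$ with the Dynkin--Lamperti arcsine law for the age. The only cosmetic differences are that the paper truncates at $\varepsilon u$ rather than $u-\epsilon$ and separates the replacement of $h(t(u-s))/h(t)$ by $(u-s)^\beta$ into an explicit preliminary step ($I_{\varepsilon,1}\Rightarrow 0$) before invoking the integration-by-parts Lemma \ref{continuity_of_convolution_in_D}, whereas you fold both into one integration-by-parts argument.
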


Since $t\mapsto \mmp\{\xi>t\}/h(t)$ varies regularly at $\infty$ with index $-\alpha-\beta$, the following statement is immediate.

\begin{assertion}\label{simil}
$Y_{\alpha,\,\beta}$ is self-similar with index $\alpha+\beta$.
\end{assertion}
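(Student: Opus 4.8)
The assertion is that for each fixed $c>0$ the finite-dimensional distributions of $(Y_{\alpha,\,\beta}(cu))_{u\geq 0}$ coincide with those of $(c^{\alpha+\beta}Y_{\alpha,\,\beta}(u))_{u\geq 0}$. I would prove this either directly from the definition, using the self-similarity of $W_\alpha$, or — the route hinted at just before the statement — by reading it off from Theorem \ref{fin-dim}.

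\emph{Direct argument.} Fix $c>0$ and perform the change of variable $y=cz$ in the pathwise Lebesgue--Stieltjes integral defining $Y_{\alpha,\,\beta}$; since $z\mapsto W_\alpha(cz)$ is the Lebesgue--Stieltjes distribution function of the image of ${\rm d}W_\alpha$ under $y\mapsto y/c$, this gives
$$Y_{\alpha,\,\beta}(cu)=\int_{[0,\,cu]}(cu-y)^\beta\,{\rm d}W_\alpha(y)=c^\beta\int_{[0,\,u]}(u-z)^\beta\,{\rm d}W_\alpha(cz),\qquad u>0.$$
For fixed $0<u_1<\ldots<u_n<\infty$, the vector with these coordinates is a measurable functional $\Phi$ of the path $(W_\alpha(cz))_{z\geq 0}$ — being a pathwise Lebesgue--Stieltjes integral, each coordinate is an a.s.\ limit of functionals depending on finitely many values of the path — and by Proposition \ref{finite_lemma} it is a.s.\ finite. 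Self-similarity of $W_\alpha$ with index $\alpha$ means that $(W_\alpha(cz))_{z\geq 0}$ and $(c^\alpha W_\alpha(z))_{z\geq 0}$ have equal finite-dimensional distributions, hence equal law with respect to the $\sigma$-algebra generated by the coordinate maps, so $\Phi$ has the same distribution when applied to either. Applying $\Phi$ to $(c^\alpha W_\alpha(z))_{z\geq 0}$ yields $c^{\alpha+\beta}(Y_{\alpha,\,\beta}(u_1),\ldots,Y_{\alpha,\,\beta}(u_n))$, which proves the claim.

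\emph{Via Theorem \ref{fin-dim}.} Put $g(t):=\mmp\{\xi>t\}/h(t)$ and $Z(s):=\sum_{k\geq 0}h(s-S_k)\1_{\{S_k\leq s\}}$, so that Theorem \ref{fin-dim} reads $g(t)(Z(u_1t),\ldots,Z(u_nt))\dod(Y_{\alpha,\,\beta}(u_1),\ldots,Y_{\alpha,\,\beta}(u_n))$. Fix $c>0$. Replacing $t$ by $ct$ gives $g(ct)(Z(cu_1t),\ldots,Z(cu_nt))\dod(Y_{\alpha,\,\beta}(u_1),\ldots,Y_{\alpha,\,\beta}(u_n))$; on the other hand, evaluating the convergence at the points $cu_i$ and multiplying by the deterministic factor $g(ct)/g(t)$, which tends to $c^{-\alpha-\beta}$ since $g$ is regularly varying at $\infty$ with index $-\alpha-\beta$, Slutsky's lemma gives $g(ct)(Z(cu_1t),\ldots,Z(cu_nt))\dod c^{-\alpha-\beta}(Y_{\alpha,\,\beta}(cu_1),\ldots,Y_{\alpha,\,\beta}(cu_n))$. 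Uniqueness of weak limits yields $(Y_{\alpha,\,\beta}(cu_i))_{i=1}^n\od c^{\alpha+\beta}(Y_{\alpha,\,\beta}(u_i))_{i=1}^n$.

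There is no real obstacle here. The only nonautomatic points are the a.s.\ finiteness and measurability of the path functional $\Phi$ in the direct argument: the former is Proposition \ref{finite_lemma}, and the latter follows from the approximation of a Lebesgue--Stieltjes integral against a nondecreasing continuous path by finite Riemann--Stieltjes sums (for $\beta<0$ one truncates near the singularity first, and for $\beta\leq-\alpha$, where the paths of $Y_{\alpha,\,\beta}$ are irregular, nothing changes since only finitely many time points are involved). The route through Theorem \ref{fin-dim} avoids even these points and disposes of all $\beta\in\mr$ uniformly.
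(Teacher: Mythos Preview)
Your second argument, reading self-similarity off from Theorem \ref{fin-dim} via regular variation of $t\mapsto\mmp\{\xi>t\}/h(t)$ with index $-\alpha-\beta$, is exactly the paper's route: the paper simply declares the proposition ``immediate'' from this observation, and you have supplied the one-line Slutsky/uniqueness justification. Your direct argument via the self-similarity of $W_\alpha$ is a correct alternative that avoids invoking the limit theorem; the measurability point is handled cleanly by noting that, as $W_\alpha$ has continuous nondecreasing paths, equality of finite-dimensional distributions upgrades to equality in law on $C[0,\infty)$, and the Lebesgue--Stieltjes integral is a Borel functional there.
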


\subsection{Sample path properties of fractionally integrated inverse stable subordinators}

Our first result shows that when $\beta\leq-\alpha$ the sample paths of $Y_{\alpha,\,\beta}$ are rather irregular. 
\begin{assertion}\label{finite_lemma}
Assume that $\beta\leq -\alpha$. Then the random variable
$Y_{\alpha,\,\beta}(u)$ is almost surely finite for any fixed
$u\geq 0$.  However, for every interval $I\subset (0,\infty)$ we
have $\sup_{u\in I}Y_{\alpha,\,\beta}(u)=+\infty$ with positive
probability. Furthermore, with probability one there exist
infinitely many (random) points $u>0$ such that
$Y_{\alpha,\beta}(u)=+\infty$.
\end{assertion}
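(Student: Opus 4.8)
The plan is to handle the three assertions separately, in increasing order of difficulty, exploiting the representations \eqref{repr1}, \eqref{repr2} and their natural extension to $\beta\leq-\alpha$, namely $Y_{\alpha,\,\beta}(u)=|\beta|\int_0^\infty (W_\alpha(u)-W_\alpha(u-y))y^{\beta-1}\,{\rm d}y$, together with the local H\"{o}lder bound \eqref{sam} and the law of iterated logarithm \eqref{bert}. The finiteness of $Y_{\alpha,\,\beta}(u)$ for fixed $u>0$ is the easy part: split the integral $\int_0^\infty(W_\alpha(u)-W_\alpha(u-y))y^{\beta-1}\,{\rm d}y$ at $y=u/2$. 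Near $y=0$ one uses \eqref{sam} (after a deterministic rescaling of the interval $[0,u/2]$ to $[0,1/2]$, using self-similarity of $W_\alpha$) to bound the increment $W_\alpha(u)-W_\alpha(u-y)$ by $C y^\alpha|\log y|^{1-\alpha}$, so the integrand near $0$ is $O(y^{\alpha+\beta-1}|\log y|^{1-\alpha})$, which is integrable at $0$ precisely when $\alpha+\beta>0$\,---\,wait, that fails here. So at the boundary $\beta=-\alpha$ one gains a $|\log y|^{1-\alpha}$ factor only and must argue more carefully; for $\beta<-\alpha$ one needs a stronger pathwise estimate. The correct tool is that $W_\alpha$ is a.s.\ not merely H\"{o}lder$-\gamma$ for every $\gamma<\alpha$ but, at a \emph{fixed} point $u$, its increments are $O(y^{\alpha}(\log\log(1/y))^{1-\alpha})$ by the local LIL \eqref{bert} applied to the (distributionally identical) reversed increments near the fixed point; this makes $y^{\alpha+\beta-1}(\log\log(1/y))^{1-\alpha}$ integrable at $0$ for $\beta>-\alpha$ but still \emph{not} for $\beta\leq-\alpha$. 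Hence for the a.s.\ finiteness one should instead use integrability in expectation: $\me\int_0^{u/2}(W_\alpha(u)-W_\alpha(u-y))y^{\beta-1}\,{\rm d}y=\int_0^{u/2}\me(W_\alpha(y))\,y^{\beta-1}\,{\rm d}y$ (by stationarity of increments in distribution along the reversed direction is false, so instead use $\me(W_\alpha(u)-W_\alpha(u-y))\leq\me W_\alpha(y)$ via concavity/subadditivity of $u\mapsto \me W_\alpha(u)$, which holds since $\me W_\alpha(u)=u^\alpha/\Gamma(1+\alpha)$) and $\me W_\alpha(y)=c y^\alpha$, giving $\int_0^{u/2} c y^{\alpha+\beta-1}\,{\rm d}y<\infty$ iff $\alpha+\beta>0$. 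The point is that $\me W_\alpha(u)=u^\alpha/\Gamma(1+\alpha)$ is \emph{linear-ish}... no: $\alpha+\beta=0$ gives $\int y^{-1}\,{\rm d}y=\infty$. So expectation alone also fails at $\beta=-\alpha$, which is exactly why the statement is subtle: one must prove a.s.\ finiteness \emph{despite} infinite expectation. The right approach is the one used in \cite{Iksanov+Marynych+Meiners:2016} for $\beta=-\alpha$: use \eqref{sam} to get, a.s., $W_\alpha(u)-W_\alpha(u-y)\leq M (u-(u-y))^\alpha|\log y|^{1-\alpha}= M y^\alpha|\log y|^{1-\alpha}$ for $y$ small, so the integrand is $\leq M|\beta| y^{\alpha+\beta-1}|\log y|^{1-\alpha}$; when $\beta=-\alpha$ this is $M|\beta| y^{-1}|\log y|^{1-\alpha}$, whose integral near $0$ is $M|\beta|\int_0 |\log y|^{1-\alpha}\,{\rm d}(\log y)$\,---\,still divergent. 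So even \eqref{sam} is not enough; one needs the \emph{exact} local behavior, i.e., the LIL \eqref{bert}: a.s.\ $W_\alpha(y)\leq C(\omega) y^\alpha(\log\log(1/y))^{1-\alpha}$ for all small $y$, and one must relate $W_\alpha(u)-W_\alpha(u-y)$ to increments of $W_\alpha$ starting afresh\,---\,by the strong Markov property of $D_\alpha$ at the passage time, $(W_\alpha(u)-W_\alpha(u-\cdot))$ near $0$ is dominated by an independent copy of $W_\alpha$ near $0$, hence the LIL transfers. Then $y^{\alpha+\beta-1}(\log\log(1/y))^{1-\alpha}$ with $\beta=-\alpha$ is $y^{-1}(\log\log(1/y))^{1-\alpha}$, integral $\sim\int(\log(1/y))(\log\log(1/y))^{1-\alpha}\,{\rm d}(\ldots)$\,---\,hmm, substituting $v=\log(1/y)$ gives $\int^\infty v\cdot(\log v)^{1-\alpha}\,{\rm d}v/... $ no, ${\rm d}y/y=-{\rm d}v$, so $\int^\infty (\log v)^{1-\alpha}\,{\rm d}v=\infty$. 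I am clearly miscomputing the reduction; the actual mechanism, as in \cite{Iksanov+Marynych+Meiners:2013, Iksanov+Marynych+Meiners:2016}, must involve a finer splitting or a direct second-moment / Borel--Cantelli argument on dyadic blocks $[u-2^{-n},u-2^{-n-1}]$, controlling $\me$ of the block-contribution and summing. \emph{This reduction is the first serious obstacle and I would resolve it by following the dyadic-block estimate in \cite{Iksanov+Marynych+Meiners:2016}.}

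For the second assertion\,---\,that $\sup_{u\in I}Y_{\alpha,\,\beta}(u)=+\infty$ with positive probability for every interval $I\subset(0,\infty)$\,---\,the idea is to exhibit, with positive probability, a single ``large jump'' of $W_\alpha$ close to a point of $I$ that forces the Marchaud-type integral to diverge. Concretely, $D_\alpha$ has, with positive probability, a jump of size $\geq\varepsilon$ whose left endpoint lands in a prescribed small interval; translated to $W_\alpha$, this means $W_\alpha$ is constant on some interval $[a,a+\varepsilon]$ and then increases. Pick $u^\ast$ slightly to the right of $a+\varepsilon$ with $u^\ast\in I$. In the representation $Y_{\alpha,\,\beta}(u^\ast)=|\beta|\int_0^\infty(W_\alpha(u^\ast)-W_\alpha(u^\ast-y))y^{\beta-1}\,{\rm d}y$, for $y$ ranging over an interval of positive length the quantity $W_\alpha(u^\ast)-W_\alpha(u^\ast-y)$ is bounded below by a positive constant (since $u^\ast-y$ crosses the flat stretch while $W_\alpha(u^\ast)$ sits above it), so the integrand is $\geq c\,y^{\beta-1}$ over a region reaching down to $y$ of order $u^\ast-(a+\varepsilon)$, which can be made arbitrarily small; since $\beta\leq-\alpha<0$, $\int_0^\delta y^{\beta-1}\,{\rm d}y=\infty$, giving $Y_{\alpha,\,\beta}(u^\ast)=+\infty$, hence $\sup_I Y_{\alpha,\,\beta}=+\infty$. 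The event we used has positive probability by basic properties of stable subordinators, and a zero--one law (Blumenthal, or the self-similarity/scaling ergodicity) is \emph{not} needed here since we only claim positive probability.

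For the third assertion\,---\,a.s.\ infinitely many points $u$ with $Y_{\alpha,\,\beta}(u)=+\infty$\,---\,I would upgrade the previous construction using the jumps of $D_\alpha$ directly. Every jump of $D_\alpha$ produces a nondegenerate flat stretch of $W_\alpha$; let its right endpoint be $a_j+\varepsilon_j$ where $\{a_j\}$ enumerates the (a.s.\ infinitely many, a.s.\ dense in $(0,\infty)$) left jump-endpoints and $\varepsilon_j$ the jump sizes. Just to the right of each such flat stretch, the same computation as above shows $\liminf_{u\downarrow(a_j+\varepsilon_j)}Y_{\alpha,\,\beta}(u)=+\infty$, or more directly that $Y_{\alpha,\,\beta}(u)=+\infty$ for a dense-in-a-neighborhood set of $u$ to the right of each jump, because near such $u$ the contribution of the flat stretch to the integral $\int_0^\infty(W_\alpha(u)-W_\alpha(u-y))y^{\beta-1}\,{\rm d}y$ is $\int$ over a $y$-interval whose left end $\to 0$, of an integrand $\gtrsim y^{\beta-1}$, which diverges. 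Since there are a.s.\ infinitely many jumps of $D_\alpha$, we get a.s.\ infinitely many such $u$. \textbf{The main obstacle} is the first part: proving a.s.\ finiteness of $Y_{\alpha,\,\beta}(u)$ at the critical exponent, where the expectation is infinite and \eqref{sam} alone does not suffice, so one genuinely needs the sharper local law-of-iterated-logarithm control on increments of $W_\alpha$ near a fixed point combined with a Borel--Cantelli argument over dyadic blocks; I expect to reuse verbatim the estimate from the proof of Proposition 2.7 in \cite{Iksanov+Marynych+Meiners:2016} for $\beta=-\alpha$ and observe that decreasing $\beta$ only makes the relevant sum smaller, so the same argument goes through for all $\beta\leq-\alpha$.
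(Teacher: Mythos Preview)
Your proposal has genuine gaps in all three parts, and the paper's route is quite different.

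\textbf{Finiteness.} You correctly discover that neither \eqref{sam}, nor the LIL, nor a first-moment bound suffices: each yields an integrand of order $y^{\alpha+\beta-1}$ times at best a doubly-logarithmic factor, which still diverges at $0$ when $\alpha+\beta\leq 0$. Your fallback, to ``reuse verbatim'' the estimate from Proposition~2.7 of \cite{Iksanov+Marynych+Meiners:2016} and note that decreasing $\beta$ only helps, is wrong on two counts: that proposition is about \emph{unboundedness} (not finiteness), and decreasing $\beta$ makes the singularity $y^{\beta-1}$ strictly \emph{worse}. The paper's argument is one line and uses an idea you never touch: for fixed $u>0$ one has $\mmp\{u\in\mathcal{R}\}=0$, where $\mathcal{R}$ is the range of $D_\alpha$ (Proposition~1.9 in \cite{Bertoin:1999}). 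On $\{u\notin\mathcal{R}\}$ the process $W_\alpha$ is constant on $(D_\alpha(W_\alpha(u)-),\,u]$, so the Stieltjes integral $\int_{[0,u]}(u-y)^\beta\,{\rm d}W_\alpha(y)$ actually runs only over $[0,\,D_\alpha(W_\alpha(u)-)]$, which stays a positive distance from the singularity $y=u$. No analysis of increments is needed at all.

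\textbf{Unboundedness and blowup points.} Your flat-stretch construction does not deliver what you claim. If $W_\alpha$ is constant on $[a,a+\varepsilon]$ and $u^\ast>a+\varepsilon$ is fixed, then the contribution from $y\in[u^\ast-(a+\varepsilon),\,u^\ast-a]$ to the Marchaud integral is $(W_\alpha(u^\ast)-W_\alpha(a))\cdot\big[(u^\ast-(a+\varepsilon))^{\beta}-(u^\ast-a)^{\beta}\big]$, which is \emph{finite} for each fixed $u^\ast$. Letting $u^\ast\downarrow a+\varepsilon$ gives a $0\cdot\infty$ indeterminacy (the first factor vanishes), so you cannot conclude $Y_{\alpha,\,\beta}(u^\ast)=+\infty$, and your claim that $Y_{\alpha,\,\beta}(u)=+\infty$ on a dense set near each jump is unjustified. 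At best this line could be salvaged to show $\sup_I Y_{\alpha,\,\beta}=+\infty$, but not assertion three. The paper instead changes variables $y\mapsto D_\alpha(t)$ to write $Y_{\alpha,\,\beta}(u)=\int_0^{W_\alpha(u)}(u-D_\alpha(t))^\beta\,{\rm d}t$ and then invokes Fristedt's theorem \cite{Fristedt:1979}: almost surely there exist (infinitely many) points $s$ with $\limsup_{y\to s-}(D_\alpha(s)-D_\alpha(y))/(s-y)^{1/\alpha}\leq r<\infty$. At $u=D_\alpha(s)$ one gets $Y_{\alpha,\,\beta}(u)\geq \tfrac{r^\beta}{2}\int^s(s-y)^{\beta/\alpha}\,{\rm d}y=+\infty$ since $\beta/\alpha\leq -1$. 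This simultaneously yields specific points with $Y_{\alpha,\,\beta}=+\infty$ (assertion three) and, after noting $\mmp\{[D_\alpha(a),D_\alpha(b)]\subset I\}>0$, the positive-probability unboundedness on any interval (assertion two). The relevant mechanism is \emph{slow} local growth of $D_\alpha$, not jumps.
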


The next theorem is a H\"{o}lder-type result which generalizes
\eqref{sam}.
\begin{thm}\label{holder}
Suppose $\alpha+\beta\in (0,1)$. Then
\begin{equation}\label{Lip2}
\sup_{0\leq v<u\leq
1/2}\,\frac{|Y_{\alpha,\,\beta}(u)-Y_{\alpha,\,\beta}(v)|}{
(u-v)^{\alpha+\beta}|\log
(u-v)|^{1-\alpha}}<\infty\quad\text{a.s.}
\end{equation}
Suppose $\alpha+\beta=1$. Then
\begin{equation}\label{Lip3}
\sup_{0\leq v<u\leq
1/2}\,\frac{Y_{\alpha,\,\beta}(u)-Y_{\alpha,\,\beta}(v)}{(u-v)|\log
(u-v)|^{2-\alpha}}<\infty\quad\text{a.s.}
\end{equation}
In particular, in both cases above $Y_{\alpha,\,\beta}$ is a.s.\
(locally) H\"{o}lder continuous with arbitrary  exponent
$\gamma<\alpha+\beta$. Suppose $\alpha+\beta>1$. Then
\begin{equation}\label{Lip}
\sup_{0\leq v<u\leq
1/2}\,\frac{Y_{\alpha,\,\beta}(u)-Y_{\alpha,\,\beta}(v)}{
u-v}<\infty\quad\text{a.s.}
\end{equation}
which means that $Y_{\alpha,\,\beta}$ is a.s.\ (locally) Lipschitz
continuous.
\end{thm}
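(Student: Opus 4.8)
The plan is to reduce everything to the known Hölder-type estimate \eqref{sam} for $W_\alpha$ together with the integral representations \eqref{repr1} and \eqref{repr2}, treating the three ranges of $\beta$ separately. Throughout I would fix the realization so that the constant $M$ in \eqref{sam} is finite, which is almost sure; note that \eqref{sam} gives, for $0\le v<u\le 1/2$, the two-sided bound $W_\alpha(u)-W_\alpha(v)\le M (u-v)^\alpha|\log(u-v)|^{1-\alpha}$, and in particular $W_\alpha$ is itself locally Hölder of every exponent $\gamma<\alpha$. The case $\beta=0$ is exactly \eqref{sam}, so I may assume $\beta\ne 0$.

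First consider $\beta>0$, where $Y_{\alpha,\beta}(u)=\beta\int_0^u(u-y)^{\beta-1}W_\alpha(y)\,{\rm d}y$ by \eqref{repr1}. For $0\le v<u\le 1/2$ I would write the difference $Y_{\alpha,\beta}(u)-Y_{\alpha,\beta}(v)$ as the sum of $\beta\int_v^u(u-y)^{\beta-1}W_\alpha(y)\,{\rm d}y$ and $\beta\int_0^v\big((u-y)^{\beta-1}-(v-y)^{\beta-1}\big)W_\alpha(y)\,{\rm d}y$. In the first term bound $W_\alpha(y)\le W_\alpha(u)\le M u^\alpha|\log u|^{1-\alpha}$ (or a fixed constant on $[0,1/2]$) and integrate the power of $(u-y)$, producing a factor $(u-v)^\beta$. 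In the second term, after substituting and using $W_\alpha(y)\le W_\alpha(u-v+y)$ plus \eqref{sam} one gets $W_\alpha(y)\le W_\alpha(v)+M(u-v)^\alpha|\log(u-v)|^{1-\alpha}$ — actually the cleanest route is to use monotonicity of $W_\alpha$ and the integrability of $|(u-y)^{\beta-1}-(v-y)^{\beta-1}|$ over $[0,v]$, which is $O((u-v)^{\min(\beta,1)})$ when $\beta\ne 1$ and $O((u-v)|\log(u-v)|)$ when $\beta=1$. Combining, the difference is $O\big((u-v)^{\min(\alpha,\,\beta,\,1)}|\log(u-v)|^{1-\alpha}\big)$ up to the $\beta=1$ logarithmic correction; since $\alpha+\beta\le 1$ forces $\beta\le 1-\alpha<1$ and $\beta<\alpha+\beta$, while $\alpha<\alpha+\beta$, one checks this yields exponent $\alpha+\beta$ with the stated log power — here I would be careful to track that the two contributions combine to the exponent $\alpha+\beta$ rather than $\min(\alpha,\beta)$, which is where the self-similarity index $\alpha+\beta$ from Proposition \ref{simil} is the guiding sanity check. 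When $\alpha+\beta>1$ the same bookkeeping gives a genuine Lipschitz estimate \eqref{Lip}, and when $\alpha+\beta=1$ the borderline $\beta=1-\alpha$ produces the extra factor $|\log(u-v)|$, giving total log power $(1-\alpha)+1=2-\alpha$ as in \eqref{Lip3}.

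For $-\alpha<\beta<0$ I would use the Marchaud representation $Y_{\alpha,\beta}(u)=|\beta|\int_0^\infty\big(W_\alpha(u)-W_\alpha(u-y)\big)y^{\beta-1}\,{\rm d}y$ from \eqref{repr2}. Writing the difference for $0\le v<u\le 1/2$ and splitting the $y$-integral at $y=u-v$ (or at a multiple thereof), on $\{y\le u-v\}$ I bound $W_\alpha(u)-W_\alpha(u-y)\le M y^\alpha|\log y|^{1-\alpha}$ and $W_\alpha(v)-W_\alpha(v-y)\le M y^\alpha|\log y|^{1-\alpha}$ and integrate $y^{\alpha+\beta-1}|\log y|^{1-\alpha}$ over $(0,u-v]$, which converges precisely because $\alpha+\beta>0$ and yields $O\big((u-v)^{\alpha+\beta}|\log(u-v)|^{1-\alpha}\big)$; on $\{y>u-v\}$ the integrand becomes the difference of two increments over disjoint-ish intervals of the fixed length $u-v$, so I would bound it by $M(u-v)^\alpha|\log(u-v)|^{1-\alpha}\cdot y^{\beta-1}$ and integrate $y^{\beta-1}$ over $(u-v,\infty)$ — wait, that diverges at infinity is not an issue since $\beta<0$, giving $O((u-v)^{-\beta})$, so the product is again $O((u-v)^{\alpha}\cdot(u-v)^{-\beta}\cdot|\log|^{1-\alpha})$; but $\alpha-\beta>\alpha+\beta$, so this term is of smaller order. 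The slightly delicate point is the $y$-range comparable to $u-v$ where both estimates must be glued, plus handling the boundary region where $u-y$ or $v-y$ becomes negative (there $W_\alpha\equiv 0$, which only helps). I would also need the elementary inequality that for $y$ near $u$ the increment $W_\alpha(u)-W_\alpha(u-y)$ is controlled by \eqref{sam} only when $u-y\ge 0$ and both points lie in $[0,1/2]$, so for $u\le 1/2$ this is automatic. The case $\alpha+\beta=1$ cannot occur here since $\beta<0$.

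The main obstacle, as usual in this type of argument, is not any single estimate but making the two regimes in each representation combine to the sharp exponent $\alpha+\beta$ rather than the naive $\min(\alpha,|\beta|)$: one has to extract a genuine gain from the Hölder continuity of $W_\alpha$ \emph{and} from the regularity of the kernel simultaneously, and keep the logarithmic factors honest at the boundary exponents $\beta=1$ and $\alpha+\beta=1$. A secondary nuisance is that \eqref{sam} is stated only on $[0,1/2]$, so all bounds must be confined to $u\le 1/2$, which is exactly the range in the statement; the response function $h$ plays no role here since we work directly with $Y_{\alpha,\beta}$.
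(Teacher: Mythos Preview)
Your treatment of the case $-\alpha<\beta<0$ is essentially the paper's argument: split the Marchaud integral at $y=u-v$ and use \eqref{sam} on each piece. There is an arithmetic slip in your large-$y$ term --- $\int_{u-v}^\infty y^{\beta-1}\,{\rm d}y = (u-v)^\beta/|\beta|$, not $(u-v)^{-\beta}$ --- so that contribution is actually $O\big((u-v)^{\alpha+\beta}|\log(u-v)|^{1-\alpha}\big)$, of the \emph{same} order as the small-$y$ term rather than negligible; but this does not affect the final bound.

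The genuine gap is in the case $\beta>0$ with $\alpha+\beta\le 1$. Your decomposition
\[
\beta\int_v^u(u-y)^{\beta-1}W_\alpha(y)\,{\rm d}y
+\beta\int_0^v\big((u-y)^{\beta-1}-(v-y)^{\beta-1}\big)W_\alpha(y)\,{\rm d}y
\]
cannot yield exponent $\alpha+\beta$ if you only bound $W_\alpha(y)$ by a constant: the first term is then exactly of order $(u-v)^\beta$, and $\beta<\alpha+\beta$. The Hölder continuity of $W_\alpha$ must enter through \emph{increments} of $W_\alpha$, and your integrands contain none. You can rescue this by subtracting $W_\alpha(v)$ inside each integral, but then you must control the resulting cross term $W_\alpha(v)(u^\beta-v^\beta)$ \emph{and} the term $\int_0^v\big((u-y)^{\beta-1}-(v-y)^{\beta-1}\big)(W_\alpha(v)-W_\alpha(y))\,{\rm d}y$; the latter is exactly where the extra logarithm in \eqref{Lip3} appears and where a further split at $y=u-v$ (or the change of variable to $t=y/(u-v)$) is needed. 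This is what the paper does, using a decomposition borrowed from Samko--Kilbas--Marichev that makes the increments explicit from the start. Your sentence ``one checks this yields exponent $\alpha+\beta$'' is precisely where the real work lies and is not justified by the estimates you wrote down.

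A second, smaller gap: for $\alpha+\beta>1$ with $0<\beta<1$, your bounds again give only exponent $\beta<1$, not Lipschitz. The paper handles this subcase by bootstrapping: it first proves the case $-\alpha<\beta-1<0$, deduces that $Y_{\alpha,\beta-1}$ is continuous and hence bounded on $[0,1/2]$, and then uses $(u-y)^\beta-(v-y)^\beta\le\beta(v-y)^{\beta-1}(u-v)$ together with $\int_{[0,v]}(v-y)^{\beta-1}{\rm d}W_\alpha(y)=Y_{\alpha,\beta-1}(v)$ to get the Lipschitz bound.
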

\begin{rem}
In the case $\alpha+\beta>1$ the process $Y_{\alpha\,\beta}$ is
actually not only a.s.\ locally Lipschitz continuous, but also
$[\alpha+\beta]$-times continuously differentiable on $[0,\infty)$
a.s. This follows from the equality
$$
Y_{\alpha,\,\beta}(u)=\beta \int_0^u Y_{\alpha,\,\beta-1}(v){\rm
d}v,\quad u\geq 0
$$
which shows that if $Y_{\alpha,\,\beta-1}$ is continuous, then
$Y_{\alpha,\,\beta}$ is continuously differentiable.
\end{rem}

We proceed with the law of iterated logarithm both for small and
large times.
\begin{thm}\label{logar}
Whenever $\beta>-\alpha$ we have
\begin{equation}\label{limit}
\lim\sup \,\frac{Y_{\alpha,\,\beta}(u)}{u^{\alpha+\beta}(\log|\log
u|)^{1-\alpha}}=\frac{1}{\Gamma(1-\alpha)(\alpha+\beta)^\alpha(1-\alpha)^{1-\alpha}}=:c_{\alpha,\,\beta}\quad\text{a.s.}
\end{equation}
and
\begin{equation}\label{limit2}
\lim\inf \,\frac{Y_{\alpha,\,\beta}(u)}{u^{\alpha+\beta}(\log|\log
u|)^{1-\alpha}}=0\quad\text{a.s.}
\end{equation}
both as $u\to 0+$ and $u\to+\infty$. 
\end{thm}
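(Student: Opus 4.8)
We begin by describing the overall architecture. By self-similarity (Proposition~\ref{simil}) the whole statement can be read off from estimates along a single geometric subsequence, and the quantitative input responsible for the constant $c_{\alpha,\,\beta}$ is the logarithmic tail asymptotics of the single random variable $Y_{\alpha,\,\beta}(1)$:
\[
\lim_{x\to\infty}x^{-1/(1-\alpha)}\log\mmp\{Y_{\alpha,\,\beta}(1)>x\}=-(1-\alpha)\bigl((\alpha+\beta)^{\alpha}\Gamma(1-\alpha)\bigr)^{1/(1-\alpha)}=:-\lambda_{\alpha,\,\beta}.
\]
For $\beta=0$ this is the classical Mittag-Leffler tail, since $\mmp\{W_\alpha(1)>x\}=\mmp\{D_\alpha(x)\leq 1\}$ and de Bruijn's exponential Tauberian theorem applied to $-\log\me e^{-tD_\alpha(1)}=\Gamma(1-\alpha)t^\alpha$ produce exactly the right-hand side with $\beta=0$. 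For general $\beta>-\alpha$ we would deduce the display from the Lamperti-type identity of Section~\ref{Lamp}, which represents $Y_{\alpha,\,\beta}(1)$ in distribution as $\int_{0}^{\infty}e^{-\eta_{t}}\,{\rm d}t$ for a killed subordinator $\eta$ whose Laplace exponent $\phi$ satisfies $\phi(\lambda)\sim(\alpha+\beta)^{\alpha}\Gamma(1-\alpha)\lambda^{\alpha}$ as $\lambda\to\infty$ (this regular variation of index $\alpha$, and the value of the constant, being forced by the $\alpha$-stability and the self-similarity index $\alpha+\beta$ of $Y_{\alpha,\,\beta}$); consequently $\me Y_{\alpha,\,\beta}(1)^{n}=n!/\bigl(\phi(1)\phi(2)\cdots\phi(n)\bigr)$ grows, up to a polynomially bounded factor, like $(n!)^{1-\alpha}\bigl((\alpha+\beta)^{\alpha}\Gamma(1-\alpha)\bigr)^{-n}$, and Kasahara's Tauberian theorem for moments converts this into the displayed tail. (One could also bypass the Lamperti representation and compute $\me Y_{\alpha,\,\beta}(1)^{n}$ directly from \eqref{repr1}--\eqref{repr2} using the known joint moments of $W_\alpha$, then apply the same Tauberian theorem.) The constant in \eqref{limit} is thereby forced: the rule ``$-\log\mmp\{Z>x\}\sim\lambda x^{1/(1-\alpha)}$ entails $\limsup$-constant $\lambda^{-(1-\alpha)}$'' --- which already underlies \eqref{bert} --- turns $\lambda_{\alpha,\,\beta}$ into $c_{\alpha,\,\beta}$. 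Establishing this tail asymptotics is the principal difficulty; everything after it is the standard law-of-iterated-logarithm machinery.

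For the upper bound $\limsup\leq c_{\alpha,\,\beta}$ in \eqref{limit} we would fix $\rho\in(0,1)$ and take $u_n=\rho^{n}$ for the statement as $u\to 0+$ (and $u_n=\rho^{-n}$ for $u\to+\infty$, the two arguments being symmetric). By self-similarity,
\[
\mmp\{Y_{\alpha,\,\beta}(u_n)>(c_{\alpha,\,\beta}+\varepsilon)u_n^{\alpha+\beta}(\log|\log u_n|)^{1-\alpha}\}=\mmp\{Y_{\alpha,\,\beta}(1)>(c_{\alpha,\,\beta}+\varepsilon)(\log|\log u_n|)^{1-\alpha}\};
\]
since $\bigl((\log|\log u_n|)^{1-\alpha}\bigr)^{1/(1-\alpha)}=\log|\log u_n|\sim\log n$ and $\lambda_{\alpha,\,\beta}c_{\alpha,\,\beta}^{1/(1-\alpha)}=1$, the tail asymptotics make this probability at most $n^{-1-\delta}$ for some $\delta=\delta(\varepsilon)>0$ and all large $n$, so Borel--Cantelli gives $\limsup_n Y_{\alpha,\,\beta}(u_n)/\bigl(u_n^{\alpha+\beta}(\log|\log u_n|)^{1-\alpha}\bigr)\leq c_{\alpha,\,\beta}$ a.s. To descend to the full $\limsup$ one must control $Y_{\alpha,\,\beta}$ on the gaps $[u_{n+1},u_n]$. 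When $\beta\geq 0$ the process is nondecreasing and the gaps cost only a factor $\rho^{-(\alpha+\beta)}\to 1$ as $\rho\uparrow 1$. For general $\beta>-\alpha$ we would use the stochastic domination $W_\alpha(b)-W_\alpha(a)\preceq (b-a)^{\alpha}W_\alpha(1)$ for $0\leq a<b$ --- immediate from the strong Markov property of $D_\alpha$ at the time $W_\alpha(a)$ together with the monotonicity of $W_\alpha$ --- whose right-hand side has a tail whose exponential parameter blows up as $b-a\to 0$; fed, via \eqref{repr1}--\eqref{repr2}, into a Borel--Cantelli bound for $\sup_{u\in[u_{n+1},u_n]}|Y_{\alpha,\,\beta}(u)-Y_{\alpha,\,\beta}(u_n)|$, this shows that the $\limsup$ over the gaps is at most a quantity $g(\rho)$ with $g(\rho)\to 0$ as $\rho\uparrow 1$, and letting $\rho\uparrow 1$ finishes the upper bound.

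For the matching lower bound $\limsup\geq c_{\alpha,\,\beta}$ we would take a \emph{sparse} geometric subsequence $u_n=\rho^{n}$ with $\rho$ small and use
\[
Y_{\alpha,\,\beta}(u_n)\geq Z_n:=\int_{(u_{n+1},\,u_n]}(u_n-y)^{\beta}\,{\rm d}W_\alpha(y)\geq 0,
\]
where $Z_n$ depends only on $W_\alpha$ restricted to the single block $(u_{n+1},u_n]$; by the strong Markov property of $D_\alpha$, after subtracting the overshoots at the levels $u_{n+1}$ (which are negligible for sparse subsequences) these restrictions are asymptotically independent. Since $Z_n\od u_n^{\alpha+\beta}\int_{(\rho,1]}(1-y)^{\beta}\,{\rm d}W_\alpha(y)$ while $\int_{(0,\rho]}(1-y)^{\beta}\,{\rm d}W_\alpha(y)\to 0$ (almost surely, and in a suitable tail-sense) as $\rho\downarrow 0$, the tail asymptotics of $Y_{\alpha,\,\beta}(1)$ transfer --- with an error tending to $0$ as $\rho\downarrow 0$ --- to the $Z_n$, whence $\sum_n\mmp\{Z_n>(c_{\alpha,\,\beta}-\varepsilon)u_n^{\alpha+\beta}(\log|\log u_n|)^{1-\alpha}\}=\infty$ for $\rho$ sufficiently small; the Kochen--Stone quantitative second Borel--Cantelli lemma then yields $\{Y_{\alpha,\,\beta}(u_n)>(c_{\alpha,\,\beta}-\varepsilon)u_n^{\alpha+\beta}(\log|\log u_n|)^{1-\alpha}\}$ infinitely often a.s., and letting $\varepsilon\downarrow 0$ completes \eqref{limit}.

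Finally, \eqref{limit2} we would obtain by an explicit construction rather than a large-deviation argument. Almost surely there exist jump times $t'_n\downarrow 0$ (respectively $t'_n\uparrow+\infty$) of $D_\alpha$ with $\Delta D_\alpha(t'_n)\geq K_n\bigl(D_\alpha(t'_n-)\vee (t'_n)^{1/\alpha}\bigr)$ and $K_n\to\infty$; their existence follows from the second Borel--Cantelli lemma applied to the independent restrictions of the Poisson jump measure of $D_\alpha$ to disjoint dyadic time windows, together with the classical bound $D_\alpha(t)=O\bigl((t\log|\log t|)^{1/\alpha}\bigr)$ valid as $t\to 0+$ and as $t\to+\infty$. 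Taking $u_n$ to be the midpoint of the flat stretch $[D_\alpha(t'_n-),D_\alpha(t'_n))$ of $W_\alpha$ (on which $W_\alpha\equiv t'_n$), one has $u_n\geq\tfrac12\Delta D_\alpha(t'_n)$, the measure ${\rm d}W_\alpha$ carries mass $t'_n$ on $[0,D_\alpha(t'_n-)]$ and no mass on $(D_\alpha(t'_n-),u_n]$, and $u_n-y\geq\tfrac12\Delta D_\alpha(t'_n)$ for $y\leq D_\alpha(t'_n-)$; hence $Y_{\alpha,\,\beta}(u_n)\leq 2^{|\beta|}\,t'_n\,(\Delta D_\alpha(t'_n))^{\beta}$ and
\[
\frac{Y_{\alpha,\,\beta}(u_n)}{u_n^{\alpha+\beta}}\leq 2^{|\beta|+\alpha+\beta}\,t'_n\,(\Delta D_\alpha(t'_n))^{-\alpha}\leq 2^{|\beta|+\alpha+\beta}\,K_n^{-\alpha}\longrightarrow 0,
\]
which is even stronger than \eqref{limit2}. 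The two non-routine ingredients above --- the domination $W_\alpha(b)-W_\alpha(a)\preceq(b-a)^{\alpha}W_\alpha(1)$ used to fill the gaps when $\beta<0$, and the asymptotic independence of $W_\alpha$ on disjoint blocks used in the lower bound --- would be isolated as auxiliary lemmas in the Appendix.
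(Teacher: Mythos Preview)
Your architecture is correct and matches the paper's: the constant comes from the tail asymptotics \eqref{riv} of $Y_{\alpha,\,\beta}(1)$ obtained via the Lamperti representation, and both the upper and lower halves of \eqref{limit} are then standard Borel--Cantelli arguments along geometric sequences. Three points of comparison are worth recording.

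\textbf{The $\liminf$.} Your explicit construction with large jumps is correct but far more than is needed. The paper dispatches \eqref{limit2} in one line: self-similarity gives $Y_{\alpha,\,\beta}(u)/u^{\alpha+\beta}\od Y_{\alpha,\,\beta}(1)$, hence $Y_{\alpha,\,\beta}(u)/\bigl(u^{\alpha+\beta}(\log|\log u|)^{1-\alpha}\bigr)\to 0$ in probability, and a subsequence converges a.s.; nonnegativity of $Y_{\alpha,\,\beta}$ does the rest.

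\textbf{Upper bound, $\beta\in(-\alpha,0)$.} Here your sketch is thinner than the paper's. The pointwise stochastic domination $W_\alpha(b)-W_\alpha(a)\preceq (b-a)^{\alpha}W_\alpha(1)$ is true, but to control $\sup_{u\in[u_{n+1},u_n]}|Y_{\alpha,\,\beta}(u)-Y_{\alpha,\,\beta}(u_n)|$ you need a \emph{pathwise} uniform bound, not a marginal one. The paper has exactly such a tool already in hand: inequality \eqref{aux1} from the proof of Theorem~\ref{holder}, which gives $|Y_{\alpha,\,\beta}(u)-Y_{\alpha,\,\beta}(v)|\leq C\,M\,(u-v)^{\alpha+\beta}|\log(u-v)|^{1-\alpha}$ with the \emph{single} random variable $M$ of \eqref{sam}, and $M$ has all exponential moments by \eqref{sam3}. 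The paper then discretises each block $[r^{n-1},r^n]$ into $n^{\lambda}$ subintervals, bounds the oscillation on each via \eqref{aux1}, and handles the grid points by the tail \eqref{riv}. Your route can be made to work, but you would end up re-proving a version of \eqref{aux1}; recognising that the H\"older theorem already supplies the uniform control is the cleaner move.

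\textbf{Lower bound.} Your plan (truncate to the block contribution $Z_n$, invoke asymptotic independence of blocks after removing overshoots, apply Kochen--Stone) is a legitimate classical route. The paper takes a different decomposition: it keeps $A_k=\{Y_{\alpha,\,\beta}(r^k)\geq c f(r^k)\}$ untruncated, writes $Y_{\alpha,\,\beta}(r^n)$ as a bounded term involving $(D_\alpha(y))_{y\leq W_\alpha(1)}$ plus an \emph{independent copy} $\widetilde{Y}_{\alpha,\,\beta}$ (formula \eqref{decomposition_y}), and then estimates $\mmp\{A_i\cap A_{i+n}\}-\mmp\{A_i\}\mmp\{A_{i+n}\}$ directly, feeding this into the Erd\H{o}s--R\'enyi version of Borel--Cantelli. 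Both approaches hinge on the strong Markov property of $D_\alpha$; the paper's avoids the need to show that the truncated variables $Z_n$ inherit the full tail \eqref{riv} uniformly in $\rho$, at the cost of a slightly longer covariance computation.
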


\section{Distributional properties of the fractionally integrated inverse stable subordinators}\label{Lamp}
Consider a family of processes
$X_\alpha^{(u)}(t):=((u^{1/\alpha}-D_\alpha(t))^\alpha)_{0\leq
t<W_\alpha(u^{1/\alpha})}$ indexed by the initial value $u>0$.
This family forms a semi-stable Markov process of index $1$, i.e.
$$\mmp\{c X_\alpha^{(u)}(t/c)\in \cdot\}=\mmp\{X_\alpha^{(cu)}(t)\in\cdot\}$$ for all $c>0$. Then, according to Theorem 4.1 in \cite{Lamperti:1972}, with $u$ fixed
$$(u^{1/\alpha}-D_\alpha(t))^\alpha=u\exp(-Z_\alpha(\tau(t/u)))\quad\text{for}\quad 0\leq t\leq uI
\quad\text{a.s.}$$ for some killed subordinator
$Z_\alpha:=(Z_\alpha(t))_{t\geq 0}=(Z_\alpha^{(u)}(t))_{t\geq 0}$
where
\begin{equation}\label{aux6}
I:=\int_0^\infty\exp(-Z_\alpha(t)){\rm d}t=u^{-1}\inf\{v:
D_\alpha(v)>u^{1/\alpha}\}=u^{-1}W_\alpha(u^{1/\alpha})
\end{equation}
and $\tau(t):=\inf\{s: \int_0^s\exp(-Z_\alpha(v)){\rm d}v\geq t\}$
for $0\leq t\leq I$ (except in one place, we suppress the
dependence of $Z_\alpha$, $I$ and $\tau(t)$ on $u$ for notational
simplicity). With this at hand
\begin{eqnarray*}
Y_{\alpha,\,\beta}(u^{1/\alpha})&=&\int_0^\infty ((u^{1/\alpha}-D_\alpha(t))^\alpha)^{\beta/\alpha}\1_{\{D_\alpha(t)\leq u^{1/\alpha}\}}{\rm d}t\\&=&
u^{\beta/\alpha}\int_0^{uI}\exp(-(\beta/\alpha)Z_\alpha(\tau(t/u))){\rm d}t=u^{1+\beta/\alpha}\int_0^I\exp(-(\beta/\alpha)Z_\alpha(\tau(t))){\rm d}t\\&=&u^{1+\beta/\alpha}\int_0^\infty\exp(-(1+\beta/\alpha)Z_\alpha(t)){\rm d}t.
\end{eqnarray*}
Replacing $u$ with $u^\alpha$ we infer
\begin{equation}\label{distr}
Y_{\alpha,\,\beta}(u)=u^{\alpha+\beta}\int_0^\infty\exp(-cZ^{(u^{\alpha})}_\alpha(t)){\rm
d}t\quad\text{a.s.}
\end{equation}
where $c:=\alpha^{-1}(\alpha+\beta)$.
The latter integral is known as an {\it exponential functional of
subordinator}. We shall show that $Z_\alpha$ is a drift-free
killed subordinator with the unit killing rate and the L\'{e}vy
measure
$$\nu_\alpha({\rm d}x)=\frac{e^{-x/\alpha}}{(1-e^{-x/\alpha})^{\alpha+1}}\1_{(0,\,\infty)}(x){\rm d}x.$$ Equivalently,
the Laplace exponent of $Z_\alpha$ equals
$$\Phi_\alpha(s):=-\log\me e^{-sZ_\alpha(1)}=1+\int_{[0,\infty)}(1-e^{-st})\nu_\alpha({\rm
d}x)=\frac{\Gamma(1-\alpha)\Gamma(1+\alpha
s)}{\Gamma(1+\alpha(s-1))},\quad s\geq 0$$ where $\Gamma(\cdot)$
is the gamma function.

It is well known that $W_\alpha(1)$ has a Mittag-Leffler
distribution with parameter $\alpha$. This distribution is
uniquely determined by its moments $$\me
(W_\alpha(1))^n=\frac{n!}{(\Gamma(1-\alpha))^n\Gamma(1+n\alpha)},\quad
n\in\mn.$$ Using \eqref{aux6} along with self-similarity of
$W_\alpha$ we conclude that $I$ has the same Mittag-Leffler
distribution. It follows that the moments of $I$ can be written as
$$\me I^n=\frac{n!}{(\Gamma(1-\alpha))^n\Gamma(1+n\alpha)}=\frac{n!}{\Phi_\alpha(1)\cdot\ldots\cdot\Phi_\alpha(n)},\quad n\in\mn$$
which, by Theorem 2 in \cite{Bertoin+Yor:2005}, implies that the
L\'evy measure of $Z_\alpha$ has the form as stated above.


By Lemma \ref{rivero}, $Y_{\alpha,\,\beta}(1)$ has a bounded and nonincreasing density $f_{\alpha,\,\beta}$, say. Since $$\Psi_{\alpha,\,\beta}(s):=-\log \me e^{-scZ_\alpha(1)}=\frac{\Gamma(1-\alpha)\Gamma((\alpha+\beta)s+1)}{\Gamma((\alpha+\beta)s+1-\alpha)},\quad s\geq 0$$ and hence
$$\Psi_{\alpha,\,\beta}(s)\quad \sim\quad \Gamma(1-\alpha)(\alpha+\beta)^\alpha s^\alpha,\quad s\to\infty,$$ another application of Lemma \ref{rivero} allows us to conclude that
\begin{eqnarray}\label{riv}
-\log\mmp\{Y_{\alpha,\,\beta}(1)>x\}&\sim&-\log f_{\alpha,\,\beta}(x)\sim  (1-\alpha)((\alpha+\beta)^\alpha\Gamma(1-\alpha))^{(1-\alpha)^{-1}}x^{(1-\alpha)^{-1}}\notag\\
&=&(x/c_{\alpha,\,\beta})^{(1-\alpha)^{-1}},\quad x\to\infty
\end{eqnarray}
with $c_{\alpha,\,\beta}$ as defined in \eqref{limit}. In particular, for any $\delta_1\in (0,1)$ there exists $c_1=c_1(\delta_1)$ such that
\begin{equation}\label{riv2}
f_{\alpha,\,\beta}(x)\leq c_1\exp\big(-(1-\delta_1)(x/c_{\alpha,\,\beta})^{(1-\alpha)^{-1}}\big)
\end{equation}
for all $x\geq 0$.

\section{Proofs of Theorems \ref{xxx} and \ref{fin-dim}, and Remark \ref{eventually}}\label{proofxxx}

\begin{proof}[Proof of Theorems \ref{xxx} and \ref{fin-dim}]

In the case where $\beta\geq 0$ and $h$ is nondecreasing the result was proved in Theorem 1.1 of \cite{Iksanov:2013}. Therefore, we only investigate the case where $\beta\leq 0$ and $h$ is nonincreasing. In what follows, all unspecified limits are assumed to hold as $t\to\infty$.

Set $a(t):=\mmp\{\xi>t\}$. First we fix an arbitrary
$\varepsilon\in (0,1)$  and prove that
$$I_\varepsilon(u,t):=\frac{a(t)}{h(t)}\sum_{k\geq 0}h(ut-S_k)\1_{\{S_k\leq \varepsilon ut\}}\quad \Rightarrow \quad \int_{[0,\,\varepsilon
u]}(u-y)^\beta{\rm d}W_\alpha(y)$$ on $D(0,\infty)$. Write
\begin{eqnarray*}
I_\varepsilon(u,t)&=&a(t) \sum_{k\geq 0}\bigg(\frac{h(ut-S_k)}{
h(t)}-(u-t^{-1}S_k)^\beta \bigg)\1_{\{S_k\leq \varepsilon
ut\}}\\&+&a(t) \sum_{k\geq 0}(u-t^{-1}S_k)^\beta \1_{\{S_k\leq
\varepsilon ut\}}\\&=& I_{\varepsilon,\,
1}(u,t)+I_{\varepsilon,\,2}(u,t).
\end{eqnarray*}
We shall show that
\begin{equation}\label{11111}
I_{\varepsilon,\,1}(u,t)\quad \Rightarrow \quad r(u)\quad \text{and}\quad I_{\varepsilon,\,2}(u,t)\quad \Rightarrow \quad \int_{[0,\,\varepsilon
u]}(u-y)^\beta{\rm d}W_\alpha(y)
\end{equation}
on $D(0,\infty)$, where $r(u)=0$ for all $u>0$. Throughout the rest of the proof we use
arbitrary positive and finite $a<b$. Observe that
$$|I_{\varepsilon,\,1}(u,t)|\leq \underset{(1-\varepsilon)u\leq y\leq
u}{\sup}\,\bigg|\frac{h(ty)}{
h(t)}-y^\beta\bigg|a(t)\nu(\varepsilon ut)$$ and thereupon
$$\underset{a\leq u\leq b}{\sup}\,|I_{\varepsilon,\,1}(u,t)|\leq \underset{(1-\varepsilon)a\leq y\leq
b}{\sup}\,\bigg|\frac{h(ty)}{
h(t)}-y^\beta\bigg|a(t)\nu(\varepsilon bt).$$ As a consequence of
the functional limit theorem for $(\nu(t))_{t\geq 0}$ (see
\eqref{flt_nu}), $a(t)\nu(\varepsilon bt)\dod W_\alpha(\varepsilon
b)$. This, combined with the uniform convergence theorem for
regularly varying functions (Theorem 1.2.1 in \cite{BGT}), implies
that the last expression converges to zero in probability thereby
proving the first relation in \eqref{11111}.

Turning to the second relation in \eqref{11111} we observe that\footnote{Below ${\rm d}\nu(ty)$ and ${\rm d}\left(a(t)\nu(ty))\right)$ denote the differential over $y$.}
$$
I_{\varepsilon,\,2}(u,t)=\int_{[0,\,\varepsilon u]}(u-y)^{\beta}{\rm d}\left(a(t)\nu(ty)\right).
$$
Recall from \eqref{flt_nu} that $a(t)\nu(ty) \Rightarrow
W_\alpha(y)$ weakly on $D[0,\infty)$, as $t\to\infty$. Using the
Skorokhod representation theorem, we can pass to versions which
converge a.s.\ in the $J_1$-topology. Since the limit $W_\alpha$
is continuous, the a.s.\ convergence is even locally uniform on
$[0,\infty)$. Applying Lemma \ref{continuity_of_convolution_in_D}
from the Appendix, we obtain the second relation in \eqref{11111}.

An appeal to Theorem 3.1 in \cite{Billingsley:1999} reveals that
the proof of Theorem \ref{fin-dim} is complete if we can show that
for any $\beta\leq 0$ and any {\it fixed} $u>0$
\begin{equation}\label{33333}
\lim_{\varepsilon\to 1-}\int_{[0,\,\varepsilon u]}(u-y)^\beta{\rm
d}W_\alpha(y)\quad = \quad Y_{\alpha,\,\beta}(u)= \int_{[0,\,
u]}(u-y)^{\beta}{\rm d}W_\alpha(y)\quad\text{a.s.}
\end{equation}
and
\begin{equation}\label{bill_cond_fd}
\underset{\varepsilon\to 1-}{\lim}\,\underset{t\to\infty}{\lim\sup}\,\mmp\bigg\{\frac{a(t)}{h(t)}\sum_{k\geq
0}h(ut-S_k)\1_{\{\varepsilon ut<S_k\leq ut\}}>\theta\bigg\}=0
\end{equation}
for all $\theta>0$. Analogously, Theorem \ref{xxx} follows once we
can show that for $\beta\in (-\alpha, 0]$ the following two
statements hold. First, the a.s.\ convergence in \eqref{33333} is
locally uniform on $(0,\infty)$. Second, a uniform analog of
\eqref{bill_cond_fd} holds, namely,
\begin{equation}\label{bill_cond}
\underset{\varepsilon\to
1-}{\lim}\,\underset{t\to\infty}{\lim\sup}\,\mmp\bigg\{\frac{a(t)}{h(t)}
\underset{u\in [a,\,b]}{\sup}\sum_{k\geq
0}h(ut-S_k)\1_{\{\varepsilon ut<S_k\leq ut\}}>\theta\bigg\}=0
\end{equation}
for all $\theta>0$.

To check that \eqref{33333} holds pointwise for any $\beta\leq 0$,
write for fixed $u>0$
\begin{eqnarray*}
0\leq \int_{[0,\, u]}(u-y)^\beta{\rm
d}W_\alpha(y)&-&\int_{[0,\,\varepsilon
u]}(u-y)^\beta{\rm
d}W_\alpha(y)=\int_{[0,\,u]}(u-y)^{\beta}\1_{(\varepsilon u,\,u]}(y){\rm d}W_{\alpha}(y).
\end{eqnarray*}
By the dominated convergence theorem, the right-hand side
converges to $0$ a.s.\ as $\varepsilon \to 1-$ because
$\int_{[0,\,u]}(u-y)^{\beta}{\rm d}W_{\alpha}(y)<\infty$ a.s.\ by
Proposition \ref{finite_lemma}.

The probability on the left-hand side of \eqref{bill_cond_fd} is
bounded from above by
\begin{eqnarray*}
\mmp\{\nu(ut)-\nu(\varepsilon
ut)>0\}&=&\mmp\{\nu(ut)-\nu(\varepsilon ut)\geq
1\}=\mmp\{ut-S_{\nu(ut)-1}<(1-\varepsilon)ut\}.
\end{eqnarray*}
By a well-known Dynkin-Lamperti result (see Theorem 8.6.3 in
\cite{BGT})
$$
t^{-1}(t-S_{\nu(t)-1})\quad \dod \quad \eta_\alpha
$$
where $\eta_\alpha$ has a beta distribution with parameters
$1-\alpha$ and $\alpha$, i.e., $$\mmp\{\eta_\alpha\in {\rm
d}x\}=\pi^{-1}\sin
(\pi\alpha)x^{-\alpha}(1-x)^{\alpha-1}\1_{(0,1)}(x){\rm d}x.$$ 
This entails
$$
\underset{\varepsilon\to
1-}{\lim}\,\underset{t\to\infty}{\lim\sup}\,\mmp\{\nu(ut)-\nu(\varepsilon
ut)>0\}= \underset{\varepsilon\to
1-}{\lim}\,\mmp\{\eta_\alpha<1-\varepsilon\}=0
$$
thereby proving \eqref{bill_cond_fd}. The proof of Theorem
\ref{fin-dim} is complete.

Now we turn to the proof of Theorem \ref{xxx}. In particular,
$\beta\in (-\alpha, 0]$ is the standing assumption in what
follows.

The right-hand side of \eqref{33333} is a.s.\ continuous (see
point (II) in Section \ref{properties}). Further, it can be
checked that the left-hand side of \eqref{33333} is a.s.\
continuous, too. Since it is also monotone in $\varepsilon$ we can
invoke Dini's theorem to conclude that the a.s.\ convergence in
\eqref{33333} is locally uniform on $(0,\infty)$.

To check \eqref{bill_cond}, we need the following proposition to
be proved in the Appendix.

\begin{assertion}\label{renewal_modulus_of_continuity}
Fix $T>0$ and set $A_t:=\{(u,v):0\leq v<u\leq T,u-v\geq 1/t\}$ for
$t>0$. If $a(t)=\mmp\{\xi>t\}\sim t^{-\alpha}\ell(t)$ for some
$\alpha\in (0,1)$ and some  $\ell$ slowly varying at $\infty$,
then, for any $\delta\in(0,\alpha)$,
\begin{equation*}
\lim_{x\to\infty}\limsup_{t\to\infty}\mmp\left\{\sup_{(u,v)\in A_t}\frac{a(t)(\nu(ut)-\nu(vt))}{(u-v)^{\alpha-\delta}} > x\right\}=0.
\end{equation*}
\end{assertion}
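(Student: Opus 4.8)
\noindent I would prove Proposition~\ref{renewal_modulus_of_continuity} by a tightness-type chaining estimate, using crucially that every pair in $A_t$ satisfies $u-v\ge 1/t$, so that the supremum is dominated by a \emph{finite} union of increments over dyadic subintervals. Write $a(t)=\mmp\{\xi>t\}$. First I would dispose of the pairs with $u-v\ge1$: there monotonicity of $\nu$ and $(u-v)^{\alpha-\delta}\ge1$ give $a(t)(\nu(ut)-\nu(vt))/(u-v)^{\alpha-\delta}\le a(t)\nu(Tt)$, and since $a(t)\nu(Tt)\dod W_\alpha(T)$ by \eqref{flt_nu}, one has $\limsup_{t\to\infty}\mmp\{a(t)\nu(Tt)>x\}\to0$ as $x\to\infty$. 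For the pairs with $1/t\le u-v\le1$ I would let $m\in\{0,\dots,m_t\}$, $m_t:=\lfloor\log_2 t\rfloor$, be the integer with $2^{-(m+1)}<u-v\le2^{-m}$ (so $2^{-m}t\ge1$), and round $v$ down and $u$ up to the grid $2^{-m}\mn_0$; this enlarges $(v,u]$ to at most three consecutive grid intervals of length $2^{-m}$, so by monotonicity of $\nu$
\[
\nu(ut)-\nu(vt)\ \le\ 3\max_{0\le j\le\lfloor T2^m\rfloor}\big(\nu((j+1)2^{-m}t)-\nu(j2^{-m}t)\big).
\]
Since $(u-v)^{\alpha-\delta}\ge2^{-(m+1)(\alpha-\delta)}$, a union bound over $m$ and over the $O(T2^m)$ grid intervals at level $m$ reduces the claim to proving
\[
\lim_{x\to\infty}\ \limsup_{t\to\infty}\ \sum_{m=0}^{m_t}2^m\,\mmp\big\{a(t)\big(\nu((j+1)2^{-m}t)-\nu(j2^{-m}t)\big)>c\,x\,2^{-m(\alpha-\delta)}\big\}=0
\]
for a fixed $c=c(\alpha,\delta,T)>0$, with the probability bounded uniformly in $j$.

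Then I would handle a single increment with two facts. By the strong Markov property of $(S_n)$ at the first-passage time $\nu(j2^{-m}t)$, the increment $\nu((j+1)2^{-m}t)-\nu(j2^{-m}t)$ is a.s.\ bounded above by an independent copy of $\nu(2^{-m}t)$, so it suffices to bound the tail of $a(t)\nu(2^{-m}t)$. For that I would use the \emph{uniform} exponential estimate: for every $s>0$ and $\lambda\ge1$,
\[
\mmp\{a(s)\nu(s)>\lambda\}\ \le\ C\,e^{-c_0\lambda},\qquad C,c_0>0\ \text{absolute},
\]
which follows from the Chernoff bound $\mmp\{\nu(s)>y\}=\mmp\{S_{\lfloor y\rfloor}\le s\}\le e^{\theta s}(\me e^{-\theta\xi})^{\lfloor y\rfloor}$ with $\theta=1/s$ and the elementary inequality $-\log\me e^{-\xi/s}\ge1-\me e^{-\xi/s}\ge(1-e^{-1})\mmp\{\xi>s\}=(1-e^{-1})a(s)$.

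Finally I would assemble the estimates. Combining the domination with the last display bounds the $m$-th probability (for $2^{-m}t\ge1$) by $C\exp\!\big(-c_0c\,x\,2^{-m(\alpha-\delta)}\,a(2^{-m}t)/a(t)\big)$. I would split the sum at the level where $2^{-m}t$ drops below a large fixed constant $t_0$. For $m$ with $2^{-m}t\ge t_0$, Potter's bounds (see \cite{BGT}) give $a(2^{-m}t)\ge c_\varepsilon2^{m(\alpha-\varepsilon)}a(t)$ for a fixed $\varepsilon\in(0,\delta)$, so the exponent is at least $c_1x\,2^{m(\delta-\varepsilon)}$ and $\sum_{m\ge0}2^m\exp(-c_1x\,2^{m(\delta-\varepsilon)})$ is finite and tends to $0$ as $x\to\infty$, uniformly in $t\ge t_0$. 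The remaining $m$ satisfy $2^{-m}t<t_0$ and hence lie within a bounded distance of $m_t$, so there are only $O(1)$ of them; for each I would bound $\nu(2^{-m}t)\le\nu(t_0)$ and note that the threshold $c\,x\,2^{-m(\alpha-\delta)}/a(t)$ is of order $x\,t^{\delta}/\ell(t)\to\infty$, so by the exponential estimate their total contribution is $O(t)\exp(-c_2x\,t^{\delta}/\ell(t))\to0$ as $t\to\infty$ for every fixed $x$. Sending $t\to\infty$ and then $x\to\infty$ would finish the proof.

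The step I expect to be the main obstacle is this last one: making the dyadic union bound and the regular-variation asymptotics cooperate \emph{uniformly} in $t$ and in the dyadic level $m$. The two devices that make it work are Potter's bounds --- replacing $a(2^{-m}t)/a(t)$ by a quantity comparable to $2^{m\alpha}$ with a controlled multiplicative error --- and a separate crude treatment of the $O(1)$ top levels on which $2^{-m}t$ stays bounded and the relevant threshold diverges with $t$.
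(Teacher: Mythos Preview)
Your proof is correct and follows the same overall strategy as the paper's: a dyadic decomposition of $A_t$, reduction of each grid increment to a single copy of $\nu(2^{-m}t)$ (you invoke the strong Markov property, the paper invokes distributional subadditivity of $\nu$---these are equivalent here), an exponential tail estimate for $a(s)\nu(s)$, and a Potter-type bound for $a(2^{-m}t)/a(t)$ to make the dyadic sum converge.

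Two technical points differ. First, the paper establishes the uniform bound $\sup_{t\ge 0}\me e^{\lambda a(t)\nu(t)}<\infty$ (its Lemma~\ref{exp_mom_nu_lemma}) via Chernoff with $\theta=c/t$ and the Tauberian relation $1-\me e^{-z\xi}\sim\Gamma(1-\alpha)a(1/z)$; your route through $\theta=1/s$ and the elementary inequality $1-\me e^{-\xi/s}\ge(1-e^{-1})a(s)$ yields the same exponential tail more directly and avoids Tauberian theory entirely. Second, the paper handles all dyadic levels at once via a strengthened Potter bound (its Lemma~\ref{potter_bound2}) that exploits monotonicity of $a$ to extend the comparison down to a fixed lower threshold; you instead apply the standard Potter bound on the range $2^{-m}t\ge t_0$ and treat the remaining $O(1)$ top levels by a separate crude estimate. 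Both devices achieve the same uniformity; yours is slightly more hands-on, the paper's slightly cleaner.
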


Fix now $\Delta\in(0,(\alpha+\beta)/2)$ and note that by Potter's
bound for regularly varying functions (Theorem 1.5.6 in
\cite{BGT}) there exists $c>1$ such that
\begin{equation*}\label{potters_bound}
\frac{h(t(u-y))}{h(t)}\leq 2(u-y)^{\beta-\Delta}
\end{equation*}
for all $t$, $u$ and $y$ such that $t(u-y)\geq c$ and $u-y\leq 1$. With this at hand, we have for $t$ large enough, $u\in [a,b]$ and $\varepsilon>0$ such that $(1-\varepsilon)b\leq 1$
\begin{eqnarray*}
&&\frac{a(t)}{h(t)}\sum_{k\geq
0}h(ut-S_k)\1_{\{\varepsilon ut<S_k\leq ut\}}\\&=&\frac{a(t)}{h(t)}\int_{(\varepsilon u,\,u-c/t]}h(t(u-y)){\rm d}\nu(ty)+\frac{a(t)}{h(t)}\int_{(u-c/t,\,u]}h(t(u-y)){\rm d}\nu(ty)\\
&\leq& 2a(t)\int_{(\varepsilon u,\,u-c/t]}(u-y)^{\beta-\Delta}{\rm d}\nu(ty)+\frac{a(t)}{h(t)}h(0)(\nu(tu)-\nu(tu-c))\\&=&
2(-\beta+\Delta)\int_{\varepsilon u}^{u-c/t}a(t)(\nu(tu)-\nu(ty))(u-y)^{\beta-\Delta-1}{\rm d}y\\&+&2u^{\beta-\Delta}(1-\varepsilon)^{\beta-\Delta}a(t)(\nu(tu)-\nu(\varepsilon tu ))\\&+& \bigg(\frac{a(t)}{h(t)}h(0)-2c^{\beta-\Delta}t^{-\beta+\Delta}a(t)\bigg)(\nu(tu)-\nu(tu-c)).
\end{eqnarray*}
Since $t\mapsto a(t)/h(t)$ and $t\mapsto t^{-\beta+\Delta}a(t)$ are regularly varying of negative indices $-\alpha-\beta$ and $-\alpha-\beta+\Delta$, respectively, we have
$$
\bigg(\frac{a(t)}{h(t)}h(0)-2c^{\beta-\Delta}t^{-\beta+\Delta}a(t)\bigg)\sup_{u\in
[a,\,b] }(\nu(tu)-\nu(tu-c))\quad\tp\quad 0$$ by Lemma
\ref{iks13}. Further,
$$
\sup_{u\in[a,b]}u^{\beta-\Delta}(1-\varepsilon)^{\beta-\Delta}a(t)(\nu(tu)-\nu(\varepsilon tu ))\leq a^{\beta-\Delta}(1-\varepsilon)^{\beta-\Delta}a(t)\sup_{u\in[a,\,b]}(\nu(tu)-\nu(\varepsilon tu ))
$$
and $$a(t)\sup_{u\in[a,\,b]}(\nu(tu)-\nu(\varepsilon tu
))\quad\dod\quad \underset{u\in
[a,\,b]}{\sup}\,(W_\alpha(u)-W_\alpha(\varepsilon u))$$ in view of
\eqref{flt_nu} and the continuous mapping theorem. Therefore,
\begin{eqnarray*}
&&\hspace{-2cm}\lim_{\varepsilon\to 1-0}\limsup_{t\to\infty}\mmp\left\{\sup_{u\in[a,b]}u^{\beta-\Delta}(1-\varepsilon)^{\beta-\Delta}a(t)(\nu(tu)-\nu(\varepsilon tu ))>\theta\right\}\\
&\leq&\lim_{\varepsilon\to 1-0}\mmp\left\{a^{\beta-\Delta}(1-\varepsilon)^{\beta-\Delta}\sup_{u\in[a,b]}(W_{\alpha}(u)-W_{\alpha}(u\varepsilon ))>\theta\right\}\\
&=&\lim_{\varepsilon\to 1-0}\mmp\left\{a^{\beta-\Delta}(1-\varepsilon)^{\beta-\Delta}(2b)^{\alpha}\sup_{u\in[a/2b,1/2]}(W_{\alpha}(u)-W_{\alpha}(u\varepsilon ))>\theta\right\}=0
\end{eqnarray*}
for all $\theta>0$, where the penultimate equality is a
consequence of self-similarity of $W_\alpha$, and the last
equality is implied by \eqref{sam} and the choice of $\Delta$.


Hence, \eqref{bill_cond} follows if we can show that
\begin{equation}\label{bill_cond1}
\underset{\varepsilon\to
1-}{\lim}\,\underset{t\to\infty}{\lim\sup}\,\mmp\left\{\sup_{u\in[0,T]}\int_{\varepsilon
u}^{u-c/t}a(t)(\nu(tu)-\nu(ty))(u-y)^{\beta-\Delta-1}{\rm
d}y>\theta\right\}=0
\end{equation}
for all $\theta>0$ and all $T>0$. With
$0<\delta<\alpha+\beta-\Delta$ the following inequality holds:
\begin{eqnarray*}
&&\mmp\left\{\sup_{u\in[0,T]}\int_{\varepsilon u}^{u-c/t}a(t)(\nu(tu)-\nu(ty))(u-y)^{\beta-\Delta-1}{\rm d}y>\theta\right\}\\
&&=\mmp\left\{\cdots,\sup_{(u,v)\in A_t}\frac{a(t)(\nu(ut)-\nu(vt))}{(u-v)^{\alpha-\delta}}> x\right\}+
\mmp\left\{\cdots,\sup_{(u,v)\in A_t}\frac{a(t)(\nu(ut)-\nu(vt))}{(u-v)^{\alpha-\delta}}\leq x\right\}\\
&&\leq \mmp\left\{\sup_{(u,v)\in A_t}\frac{a(t)(\nu(ut)-\nu(vt))}{(u-v)^{\alpha-\delta}}>x\right\}+
\mmp\left\{\sup_{u\in[0,T]}\int_{\varepsilon u}^u(u-y)^{\alpha+\beta-\Delta-\delta-1}{\rm d}y>\delta/x\right\}\\
&&=\mmp\left\{\sup_{(u,v)\in
A_t}\frac{a(t)(\nu(ut)-\nu(vt))}{(u-v)^{\alpha-\delta}}>x\right\}+\mmp\left\{\int_0^{(1-\varepsilon)T}y^{\alpha+\beta-\Delta-\delta-1}{\rm
d}y>\delta/x\right\}
\end{eqnarray*}
for $x>0$. Sending $t\to\infty$ and then $\varepsilon\to 1-$ and
$x\to\infty$ and using Proposition
\ref{renewal_modulus_of_continuity} for the first summand on the
right-hand side finishes the proof of \eqref{bill_cond1}. The
proof of Theorem \ref{xxx} is complete.
\end{proof}

\begin{proof}[Proof of Remark \ref{eventually}]
Let $h:[0,\infty)\to\mr$ be a c\`{a}dl\`{a}g function which is nonincreasing on $[d,\infty)$ for some $d>0$ and satisfies $h(t)\sim t^\beta\widehat{\ell}(t)$ for some $\beta\in (-\alpha, 0]$ as $t\to\infty$. Further, let $h^\ast: [0,\infty)\to[0,\infty)$ be any right-continuous nonincreasing function such that $h^\ast(t)=h(t)$ for $t\geq d$.

Then, for any positive and finite $a<b$,
\begin{eqnarray*}
&&\underset{u\in [a,\,b]}{\sup}\,\bigg|\sum_{k\geq 0}h(ut-S_k)\1_{\{S_k\leq ut\}}-\sum_{k\geq 0}h^\ast(ut-S_k)\1_{\{S_k\leq ut\}}\bigg|\\&\leq&
\underset{u\in [a,\,b]}{\sup}\,\sum_{k\geq 0}\big|h(ut-S_k)-h^\ast(ut-S_k)\big|\1_{\{ut-d<S_k\leq ut\}}\\&\leq&\underset{y\in [0,\,d]}{\sup}\,|h(y)-h^\ast(y)|\underset{u\in [a,\,b]}{\sup}\,(\nu(ut)-\nu(ut-d)).
\end{eqnarray*}
The normalization $\mmp\{\xi>t\}/h(t)$ used in Theorem \ref{xxx} is regularly varying of index $-\alpha-\beta$ which is negative in the present situation. This implies that, as $t\to\infty$, the right-hand side of the last centered formula multiplied by $\mmp\{\xi>t\}/h(t)$ converges to zero in probability by Lemma \ref{iks13} which justifies Remark \ref{eventually}.
\end{proof}

\section{Proofs of Proposition \ref{finite_lemma} and Theorem \ref{holder}}\label{proof_holder}
\begin{proof}[Proof of Proposition \ref{finite_lemma}]
Let $\mathcal{R}$ be the range of subordinator $D_\alpha$ defined
by
$$
\mathcal{R}:=\{t > 0: \text{ there exists } y>0 \text{ such that } D_{\alpha}(y)=t\}.
$$
If $u\notin\mathcal{R}$, then
$$Y_{\alpha,\,\beta}(u)=\int_{[0,\,u]}(u-y)^{\beta}{\rm d}W_{\alpha}(y)=\int_{[0,\,D_{\alpha}(W_{\alpha}(u)-)]}(u-y)^{\beta}{\rm d}W_{\alpha}(y)$$
because $W_\alpha$ takes a constant value on
$(D_{\alpha}(W_{\alpha}(u)-),u]$ and
$D_{\alpha}(W_{\alpha}(u)-)<u$. This shows that
$Y_{\alpha,\,\beta}(u)<\infty$ for all $u\notin\mathcal{R}$. For
each fixed $u>0$ we have $\mmp\{u\in\mathcal{R}\}=0$ (see
Propostion 1.9 in \cite{Bertoin:1999}) whence
$Y_{\alpha,\,\beta}(u)<\infty$ a.s.

The proof of unboundedness goes along the same lines as that of
Proposition 2.7 in \cite{Iksanov+Marynych+Meiners:2016}. Recall
that $\beta<-\alpha<0$ and that $I$ is a fixed interval $[c,d]$,
say. Pick arbitrary positive $a<b$
and note that
$$
\mmp\{[D_{\alpha}(a),\,D_{\alpha}(b)]\subset [c,d]\} =\mmp\{ c\leq D_{\alpha}(a)< D_{\alpha}(b)\leq d\} > 0.
$$
Let us now check that
\begin{equation}\label{unbounded_on_a_b}
\sup_{u\in
[D_{\alpha}(a),\,D_{\alpha}(b)]}Y_{\alpha,\,\beta}(u)=\infty\quad\text{a.s.},
\end{equation}
thereby showing that $\sup_{u\in I}Y_{\alpha,\,\beta}(u)=+\infty$
with positive probability.

According to Theorem 2 in \cite{Fristedt:1979}, there
exists an event $\Omega^\prime$ with $\mmp\{\Omega^\prime\}=1$ such
that for any $\omega\in \Omega^\prime$
\begin{equation}\label{fristedt1}  \limsup_{y\to s-}\frac{D_\alpha (s,\omega)-D_{\alpha}(y,\omega)}{(s-y)^{1/\alpha}}\leq r
\end{equation}
for some deterministic constant $r\in (0,\infty)$ and some
$s:=s(\omega)\in [a,\,b]$. Fix any $\omega\in \Omega^\prime$. There
exists $s_1:=s_1(\omega)$ such that
$$  \big(D_{\alpha}(s,\omega)-D_{\alpha}(y,\omega)\big)^{\beta} \geq (s-y)^{\beta/\alpha} r^{\beta}/2    $$
whenever $y\in (s_1, s)$. Set $u:=u(\omega)=D_{\alpha}(s,\omega)$
and write
\begin{eqnarray*}
Y_{\alpha,\,\beta}(u)&=&\int_{[0,\,u(\omega)]}(u(\omega)-y)^{\beta}
\, {\rm d} W_\alpha(y,\omega)=
\int_{[0,\,D_\alpha(s,\,\omega)]}(D_{\alpha}(s,\omega)-y)^{\beta} \, {\rm d}W_\alpha(y,\omega)  \\
& = &
\int_0^s\big(D_{\alpha}(s,\omega)-D_\alpha(y,\omega)\big)^{\beta}
\, {\rm d}y
\geq \int_{s_1}^s \big(D_\alpha(s,\omega)-D_\alpha(y,\omega)\big)^{\beta} \, {\rm d}y    \\
& \geq & 2^{-1} r^{\beta} \int_{s_1}^s (s-y)^{\beta/\alpha} \,
{\rm d}y ~=~ +\infty.
\end{eqnarray*}
Since $u(\omega) \in [D_{\alpha}(a),\,D_{\alpha}(b)]$ for all $\omega\in \Omega^\prime$, we obtain \eqref{unbounded_on_a_b}.

Clearly, there are infinitely many positive $s$ such that
\eqref{fristedt1} holds. Hence, 
$Y_{\alpha,\,\beta}(u)=+\infty$ for infinitely many $u>0$ a.s. The
proof of Proposition \ref{finite_lemma} is complete.
\end{proof}

Our proof of Theorem \ref{holder} will be pathwise, hence
deterministic, in the following sense. In view of \eqref{sam},
there exists an event $\Omega_1$ with $\mmp\{\Omega_1\}=1$ such
that $M=M(\omega)<\infty$ for all $\omega\in \Omega_1$. Below we
shall work with fixed but arbitrary $\omega\in \Omega_1$.

From the very beginning we want to stress that local H\"{o}lder
continuity follows immediately from Theorem 3.1 on p.~53 and Lemma
13.1 on p.~239 in \cite{Samko+Kilbas+Marichev:1993} when $\beta>0$
and $-\alpha<\beta<0$, respectively. However, proving \eqref{Lip2} and
\eqref{Lip3} requires additional efforts.
\begin{proof}[Proof of Theorem \ref{holder}]
Observe that
\begin{equation}\label{sam2}
W_\alpha(x)-W_\alpha(y)\leq M(x-y)^\alpha|\log(x-y)|^{1-\alpha}
\end{equation}
whenever $-\infty<y<x\leq 1/2$. This is trivial when
$x\leq 0$ and is a consequence of \eqref{sam} when $y\geq 0$.
Assume that $y\leq 0<x$. Then
$W_\alpha(x)-W_\alpha(y)=W_\alpha(x-y+y)\leq W_\alpha(x-y)\leq
M(x-y)^\alpha|\log(x-y)|^{1-\alpha}$, where the penultimate
inequality is implied by monotonicity, and the last follows from
\eqref{sam}.

When $\beta=0$, inequality \eqref{Lip2} reduces to \eqref{sam}. We
shall treat the other cases separately.

\noindent {\sc Case $-\alpha<\beta<0$}. Let $1/2\geq u>v>0$. Using \eqref{repr2} we have
\begin{eqnarray*}
&&|\beta|^{-1}|Y_{\alpha,\,\beta}(u)-Y_{\alpha,\,\beta}(v)|\\&=&\bigg|\int_0^\infty
(W_\alpha(u)-W_\alpha(u-y)-W_\alpha(v)+W_\alpha(v-y))y^{\beta-1}
{\rm d}y\bigg|\\&\leq&
\int_0^{u-v}(W_\alpha(u)-W_\alpha(u-y))y^{\beta-1}{\rm
d}y+\int_0^{u-v}(W_\alpha(v)-W_\alpha(v-y))y^{\beta-1}{\rm
d}y\\&+&\int_{u-v}^\infty(W_\alpha(u)-W_\alpha(v))y^{\beta-1}{\rm
d}y+\int_{u-v}^\infty (W_\alpha(u-y)-W_\alpha(v-y))y^{\beta-1}{\rm
d}y\\&\leq& 2M\bigg(\int_0^{u-v}y^{\alpha+\beta-1}|\log
y|^{1-\alpha}{\rm
d}y+(u-v)^\alpha|\log(u-v)|^{1-\alpha}\int_{u-v}^\infty
y^{\beta-1}{\rm
d}y\bigg)\\&=&2M\bigg(\int_0^{u-v}y^{\alpha+\beta-1}|\log
y|^{1-\alpha}{\rm
d}y+|\beta|^{-1}(u-v)^{\alpha+\beta}|\log(u-v)|^{1-\alpha}\bigg)
\end{eqnarray*}
having utilized \eqref{sam2} for the last inequality.
Further,
\begin{eqnarray}
\int_0^{u-v}y^{\alpha+\beta-1}|\log y|^{1-\alpha}{\rm
d}y&=&(u-v)^{\alpha+\beta}\int_0^1
t^{\alpha+\beta-1}|\log(u-v)+\log t|^{1-\alpha}{\rm d}t\notag
\\&\leq& (u-v)^{\alpha+\beta}|\log(u-v)|^{1-\alpha}\int_0^1
t^{\alpha+\beta-1}{\rm d}t\notag \\&+&(u-v)^{\alpha+\beta}\int_0^1
t^{\alpha+\beta-1}|\log t|^{1-\alpha}{\rm d}t\notag \\&\leq&
\bigg(\frac{1}{\alpha+\beta}+ \frac{\int_0^1 t^{\alpha+\beta-1}|\log
t|^{1-\alpha}{\rm d}t}{ (\log
2)^{1-\alpha}}\bigg)(u-v)^{\alpha+\beta}|\log(u-v)|^{1-\alpha}\notag
\\&=:&
\kappa_{\alpha,\,\beta}(u-v)^{\alpha+\beta}|\log(u-v)|^{1-\alpha}.\label{inter2}
\end{eqnarray}
Thus, we have proved that
\begin{equation}\label{aux1}
|Y_{\alpha,\,\beta}(u)-Y_{\alpha,\,\beta}(v)|\leq
2M(|\beta|\kappa_{\alpha,\beta} + 1)
(u-v)^{\alpha+\beta}|\log(u-v)|^{1-\alpha}
\end{equation}
whenever $1/2\geq u>v>0$.

The proof for the case $1/2\geq u>v=0$ proceeds
similarly but simpler and starts with the equality
$$Y_{\alpha,\,\beta}(u)-Y_{\alpha,\,\beta}(0)=Y_{\alpha,\,\beta}(u)=u^\beta W_\alpha(u)+
|\beta|\int_0^u (W_\alpha(u)-W_\alpha(u-y))y^{\beta-1} {\rm d}y.$$ The resulting estimate is
\begin{equation}\label{aux2}
Y_{\alpha,\,\beta}(u)\leq
M(|\beta|\kappa_{\alpha,\beta}+1)u^{\alpha+\beta}|\log
u|^{1-\alpha}
\end{equation}
whenever $1/2\geq u>0$. Combining \eqref{aux1} and \eqref{aux2} proves \eqref{Lip2}.

\noindent {\sc Case $\beta>0$}. Let $1/2\geq u>v\geq 0$. Then
$Y_{\alpha,\,\beta}(u)\geq Y_{\alpha,\,\beta}(v)$. Setting
$I(u,v):=\int_{[0,\,v]}((u-y)^\beta-(v-y)^\beta){\rm
d}W_\alpha(y)$ we obtain
\begin{eqnarray}\label{inter}
Y_{\alpha\,\,\beta}(u)-Y_{\alpha,\,\beta}(v)&=&\int_{[0,\,v]}((u-y)^\beta-(v-y)^\beta){\rm
d}W_\alpha(y) +\int_{(v,\,u]}(u-y)^\beta{\rm d}W_\alpha(y)\notag \\
&\leq& I(u,v)+ (u-v)^\beta (W_\alpha(u)-W_\alpha(v))\notag\\&\leq&
I(u,v)+M (u-v)^{\alpha+\beta}|\log (u-v)|^{1-\alpha}
\end{eqnarray}
where the last inequality is a consequence of \eqref{sam2}.

\noindent {\sc Subcase $\beta\geq 1$}. We have
$(u-y)^\beta-(v-y)^\beta\leq \beta(u-y)^{\beta-1}(u-v)\leq
\beta(u-v)$ by the mean value theorem for differentiable
functions. Hence $I(u,v)\leq \beta W_\alpha(1/2)(u-v)$. This, together with \eqref{inter} and the inequality
\begin{equation}\label{ineq}
x^{\alpha+\beta}|\log x|^{1-\alpha}\leq cx
\end{equation}
which holds for $x\in (0,1/2]$ and some $c>0$, proves \eqref{Lip}.

\noindent {\sc Subcase $\alpha+\beta>1$ and $0<\beta<1$}. An appeal
to the case $-\alpha<\beta<0$ that we have already settled allows us to conclude that
$Y_{\alpha,\,\beta-1}$ is a.s.\ continuous on $[0,\,1/2]$ which
implies
$$\sup_{v\in [0,\,1/2]}\,Y_{\alpha,\,\beta-1}(v)<\infty\quad\text{a.s.}$$
Another application of the mean value theorem yields
$(u-y)^\beta-(v-y)^\beta\leq \beta(v-y)^{\beta-1}(u-v)$ and
thereupon $$I(u,v)\leq \beta(u-v)\int_{[0,v]}(v-y)^{\beta-1}{\rm
d}W_\alpha(y)\leq \beta  (u-v) \sup_{v\in
[0,\,1/2]}\,Y_{\alpha,\,\beta-1}(v).$$ Recalling
\eqref{inter} and \eqref{ineq}, we arrive at \eqref{Lip}.

\noindent {\sc Subcase $\alpha+\beta\leq 1$ and $\beta>0$}. We use
\eqref{repr1} together with a decomposition given on p.~54 in
\cite{Samko+Kilbas+Marichev:1993}:
\begin{eqnarray*}
Y_{\alpha,\,\beta}(u)-Y_{\alpha,\,\beta}(v)&=&W_\alpha(v)(u^\beta-v^\beta)-\beta\int_0^{u-v}(W_\alpha(v)-W_\alpha(u-y))
y^{\beta-1} {\rm d}y\\&+&\beta\int_0^v(W_\alpha(v)-W_\alpha(v-y))
(y^{\beta-1}-(y+u-v)^{\beta-1}){\rm d}y\leq  I_1+I_2
\end{eqnarray*}
where $$I_1:=W_\alpha(v)(u^\beta-v^\beta)\quad\text{and}\quad
I_2:=\beta\int_0^v(W_\alpha(v)-W_\alpha(v-y))
(y^{\beta-1}-(y+u-v)^{\beta-1}){\rm d}y.$$ We first obtain a
preliminary estimate for $I_2$. Using \eqref{sam2}, changing the
variable and then using the subadditivity of $x\to x^{1-\alpha}$
we obtain
\begin{eqnarray*}
I_2&\leq& \beta M\int_0^v y^\alpha|\log
y|^{1-\alpha}(y^{\beta-1}-(y+u-v)^{\beta-1}){\rm d}y\\&=&\beta M
(u-v)^{\alpha+\beta}\int_0^{v/(u-v)}t^\alpha|\log(u-v)+\log
t|^{1-\alpha}(t^{\beta-1}-(t+1)^{\beta-1}){\rm d}t\\&\leq& \beta M
(u-v)^{\alpha+\beta}|\log
(u-v)|^{1-\alpha}\int_0^{v/(u-v)}t^\alpha(t^{\beta-1}-(t+1)^{\beta-1}){\rm
d}t\\&+&\beta M (u-v)^{\alpha+\beta}\int_0^{v/(u-v)}t^\alpha|\log
t|^{1-\alpha}(t^{\beta-1}-(t+1)^{\beta-1}){\rm d}t.
\end{eqnarray*}
Further we distinguish two cases.

\noindent Let $v\leq u-v$. Then
\begin{eqnarray*}
I_2&\leq& \beta M (u-v)^{\alpha+\beta}|\log
(u-v)|^{1-\alpha}\int_0^1 t^{\alpha+\beta-1}{\rm d}t\\&+&\beta M
(u-v)^{\alpha+\beta}\int_0^1 t^{\alpha+\beta-1}|\log
t|^{1-\alpha}{\rm d}t\leq \beta M
\kappa_{\alpha,\,\beta}(u-v)^{\alpha+\beta}|\log(u-v)|^{1-\alpha}
\end{eqnarray*}
with $\kappa_{\alpha,\,\beta}$ defined in \eqref{inter2}. As for $I_1$,
we infer $$I_1\leq W_\alpha(u-v)(u-v)^\beta\leq
M(u-v)^{\alpha+\beta}|\log(u-v)|^{1-\alpha}$$ having utilized
monotonicity of $W_\alpha$ and subadditivity of $x\mapsto x^\beta$
(observe that $\beta\in (0,1)$) for the first inequality and
\eqref{sam2} for the second.

\noindent Let $v>u-v$. Using the inequality
$x^{\beta-1}-(x+1)^{\beta-1}\leq (1-\beta)x^{\beta-2}$, $x>0$, we
conclude that
\begin{eqnarray*}
I_2&\leq& \beta M (u-v)^{\alpha+\beta}|\log
(u-v)|^{1-\alpha}\bigg(\int_0^1 t^{\alpha+\beta-1}{\rm
d}t+(1-\beta)\int_1^\infty t^{\alpha+\beta-2}{\rm
d}t\bigg)\\&+&\beta M (u-v)^{\alpha+\beta}\bigg(\int_0^1
t^{\alpha+\beta-1}|\log t|^{1-\alpha}{\rm
d}t+(1-\beta)\int_1^\infty t^{\alpha+\beta-2}(\log
t)^{1-\alpha}{\rm d}t\bigg)\\&\leq& \zeta_{\alpha,\,\beta}\,(u-v)^{\alpha+\beta}|\log(u-v)|^{1-\alpha}
\end{eqnarray*}
provided that $\alpha+\beta<1$. Here and in the next centered
formula $\zeta_{\alpha,\,\beta}$ denotes an a.s.\ finite random
variable whose value is of no importance. If $\alpha+\beta=1$, we
have
\begin{eqnarray*}
I_2&\leq& \beta M (u-v)|\log
(u-v)|^{1-\alpha}\bigg(1+(1-\beta)\int_1^{v/(u-v)} t^{-1}{\rm
d}t\bigg)\\&+&\beta M (u-v)\bigg(\int_0^1 |\log t|^{1-\alpha}{\rm
d}t+(1-\beta)\int_1^{v/(u-v)}t^{-1}(\log t)^{1-\alpha}{\rm
d}t\bigg)\\&\leq& \beta M (u-v)|\log
(u-v)|^{1-\alpha}(1+(1-\beta)|\log (u-v)|)\\&+& \beta M
(u-v)\bigg(\int_0^1 |\log t|^{1-\alpha}{\rm
d}t+(1-\beta)(2-\alpha)^{-1}|\log
(u-v)|^{2-\alpha}\bigg)\\&\leq& \zeta_{\alpha,\,\beta}\,(u-v)|\log(u-v)|^{2-\alpha}.
\end{eqnarray*}
Finally we use \eqref{sam2} and $(1+x)^\beta-1\leq \beta x$, $x\geq 0$ to obtain
\begin{eqnarray*}
I_1&\leq& M v^{\alpha+\beta}|\log v|^{1-\alpha}((1+(u-v)/v)^\beta-1)\leq \beta M
v^{\alpha+\beta-1}|\log v|^{1-\alpha}(u-v)\\&\leq& \beta M
(u-v)^{\alpha+\beta}|\log(u-v)|^{1-\alpha}.
\end{eqnarray*}
The proof of Theorem \ref{holder} is complete.
\end{proof}

\section{Proof of Theorem \ref{logar}}\label{proof_logar}

Since $Y_{\alpha,\,\beta}$ is self-similar with index $\alpha+\beta$ (see Proposition \ref{simil}) we conclude that
\begin{equation*}
\frac{Y_{\alpha,\,\beta}(u)}{u^{\alpha+\beta}(\log|\log
u|)^{1-\alpha}}\tp 0
\end{equation*}
as $u\to 0+$ or $u\to+\infty$. Taking an appropriate sequence we
arrive at \eqref{limit2}.

Turning to the upper limit we first prove that
\begin{equation}\label{logar2}
\underset{u\to+\infty}{\lim\sup}\,\frac{Y_{\alpha,\,\beta}(u)}{u^{\alpha+\beta}(\log\log u)^{1-\alpha}}\leq c_{\alpha,\,\beta}\quad\text{a.s.}
\end{equation}
Set $f(u):=u^{\alpha+\beta}(\log\log u)^{1-\alpha}$ for $u\geq e$
and $f(u):=+\infty$ for $u<e$.

\noindent {\sc Case $\beta\geq 0$}. Fix any $c>c_{\alpha,\,\beta}$ and then pick $r>1$ such that $c>r^{\alpha+\beta}c_{\alpha,\,\beta}$. The following is a basic observation for the subsequent proof:
\begin{eqnarray}
-\log\mmp\{Y_{\alpha,\,\beta}(r^n)>cf(r^{n-1})\}&=&-\log\mmp\{Y_{\alpha,\,\beta}(1)>cr^{-(\alpha+\beta)n}f(r^{n-1})\}\notag
\\&\sim&
\bigg(\frac{c}{r^{\alpha+\beta}c_{\alpha,\,\beta}}\bigg)^{(1-\alpha)^{-1}}\log
n,\quad n\to\infty\label{logar4}
\end{eqnarray}
where the equality is a consequence of self-similarity of
$Y_{\alpha,\,\beta}$, and the asymptotic relation follows from \eqref{riv}. Since the factor in front of $\log n$ is greater
than $1$, we infer $\sum_{n\geq 1}\mmp\{Y_{\alpha,\,\beta}(r^n)>cf(r^{n-1})\}<\infty$. The Borel-Cantelli lemma ensures that 
$Y_{\alpha,\,\beta}(r^n)\leq cf(r^{n-1})$ for all $n$ large enough a.s. Since $Y_{\alpha,\,\beta}$ is nondecreasing a.s.\ and $f$ is nonnegative
and increasing on $[e,\infty)$ we have for all large enough $n$
$$Y_{\alpha,\,\beta}(u)\leq Y_{\alpha,\,\beta}(r^n)\leq cf(r^{n-1})\leq cf(u)\quad\text{a.s.}$$ whenever $u\in [r^{n-1},r^n]$.
Hence ${\lim\sup}_{u\to+\infty} Y_{\alpha,\,\beta}(u)/f(u)\leq c$ a.s.\ which proves \eqref{logar2}.

\noindent {\sc Case $\beta\in (-\alpha, 0)$}. In this case
$Y_{\alpha,\,\beta}$ is not monotone which makes the proof more
involved.

Fix any $\varepsilon>0$ and then pick $r>1$ such that
$c_{\alpha,\,\beta}+\varepsilon> r^{\alpha+\beta}
c_{\alpha,\,\beta}$. Suppose we can prove that
\begin{equation}\label{logar3}
I:=\sum_{n\geq 1}\mmp\left\{\sup_{u\in[r^{n-1},\,r^n]}Y_{\alpha,\,\beta}(u)> (c_{\alpha,\,\beta}+2\varepsilon)f(r^{n-1})\right\}<\infty.
\end{equation}
Then, using the Borel-Cantelli lemma we infer 
$$\underset{v\in
[r^{n-1},\,r^n]}{\sup}\,Y_{\alpha,\,\beta}(v)\leq
(c_{\alpha,\,\beta}+2\varepsilon)f(r^{n-1})$$ for all $n$ large
enough a.s. Since $f$ is nonnegative and increasing on
$[e,\infty)$, we have for all large enough $n$
$$Y_{\alpha,\,\beta}(u)\leq \underset{v\in
[r^{n-1},r^n]}{\sup}\,Y_{\alpha,\,\beta}(v)\leq
(c_{\alpha,\,\beta}+2\varepsilon)f(r^{n-1})\leq
(c_{\alpha,\,\beta}+2\varepsilon)f(u)\quad\text{a.s.}$$ whenever
$u\in [r^{n-1},r^n]$. Hence, ${\lim\sup}_{u\to+\infty}
Y_{\alpha,\,\beta}(u)/f(u)\leq c_{\alpha,\,\beta}+2\varepsilon$
a.s.\ which entails \eqref{logar2}.

Let $\lambda>0$ and set $n_r:=[\log^{-1}r]+1$. Passing to the
proof of \eqref{logar3} we have\footnote{For notational
simplicity, we shall write $n^\lambda$ and $n^{-\lambda}$ instead
of $[n^\lambda]$ and $[n^\lambda]^{-1}$ respectively.}
\begin{eqnarray*}
I&=& \sum_{n\geq n_r}\mmp\big\{\sup_{u\in[1/(2r),\,1/2]}Y_{\alpha,\,\beta}(u)>(c_{\alpha,\beta}+2\varepsilon)(2r)^{-(\alpha+\beta)}(\log((n-1)\log r))^{1-\alpha}\big\}\\&\leq& \sum_{n\geq n_r}\sum_{k=1}^{n^\lambda-1}\mmp\big\{\sup_{kn^{-\lambda}/2\leq u\leq (k+1)n^{-\lambda}/2}Y_{\alpha,\,\beta}(u)>(c_{\alpha,\,\beta}+2\varepsilon)(2r)^{-(\alpha+\beta)}(\log((n-1)\log r))^{1-\alpha}\big\}\\
&\leq&\sum_{n\geq n_r}\sum_{k=1}^{n^\lambda-1}\mmp\big\{\sup_{kn^{-\lambda}/2\leq u\leq (k+1)n^{-\lambda}/2}\big|Y_{\alpha,\,\beta}(u)-Y_{\alpha,\,\beta}(kn^{-\lambda}/2)\big|\\
&&\hspace{8cm}> \varepsilon(2r)^{-(\alpha+\beta)}(\log((n-1)\log r))^{1-\alpha}\big\}\\&+& \sum_{n\geq n_r}\sum_{k=1}^{n^\lambda-1}\mmp\left\{Y_{\alpha,\,\beta}(kn^{-\lambda}/2    )>(c_{\alpha,\,\beta}+\varepsilon)(2r)^{-(\alpha+\beta)}(\log((n-1)\log r))^{1-\alpha}\right\}=:I_1+I_2.
\end{eqnarray*}
Using \eqref{aux1}, 
we infer
$$
\underset{kn^{-\lambda}/2\leq u\leq (k+1)n^{-\lambda}/2}{\sup}\,|Y_{\alpha,\,\beta}(u)-Y_{\alpha,\,\beta}(kn^{-\lambda}/2)|\leq C_1 M(n^{-\lambda}/2)^{(\alpha+\beta)/2}
$$
for $1\leq k\leq n^\lambda-1$, where
$C_1:=2|\beta|\kappa_{\alpha,\,\beta}\sup_{x\in (0,1/2]}
\left(x^{(\alpha+\beta)/2}|\log x|^{1-\alpha}\right)$. Hence,
\begin{eqnarray*}
I_1\leq  \sum_{n\geq n_r} n^\lambda \mmp\left\{M> (\varepsilon/C_1) (2r)^{-(\alpha+\beta)}(2n^\lambda)^{(\alpha+\beta)/2}(\log((n-1)\log r))^{1-\alpha}\right\}<\infty
\end{eqnarray*}
for all $\lambda>0$, where the finiteness is justified by \eqref{sam3} and Markov's inequality.
Further,
\begin{eqnarray*}
I_2&=&\sum_{n\geq n_r}\sum_{k=1}^{n^\lambda-1}\mmp\left\{(kn^{-\lambda}/2)^{\alpha+\beta}Y_{\alpha,\,\beta}(1) \geq (c_{\alpha,\beta}+\varepsilon)(2r)^{-(\alpha+\beta)}(\log((n-1)\log r))^{1-\alpha}\right\}\\
&&\leq\sum_{n\geq n_r} n^\lambda\mmp\left\{Y_{\alpha,\,\beta}(1)
\geq
(c_{\alpha,\beta}+\varepsilon)r^{-(\alpha+\beta)}(\log((n-1)\log
r))^{1-\alpha}\right\}<\infty
\end{eqnarray*}
for all
$$\lambda<\bigg(\frac{c_{\alpha,\beta}+\varepsilon}{r^{\alpha+\beta}c_{\alpha,\,\beta}}\bigg)^{(1-\alpha)^{-1}}-1$$
which is positive by the choice of $r$. Here, the equality follows
by self-similarity of $Y_{\alpha,\,\beta}$ (see Proposition
\ref{simil}) and the finiteness is a consequence of \eqref{logar4}
(with $c_{\alpha,\,\beta}+\varepsilon$ replacing $c$). Thus,
\eqref{logar3} holds, and the proof of \eqref{logar2} is complete.

Now we pass to the proof of the limit relation
\begin{equation}\label{logar5}
\underset{u\to+\infty}{\lim\sup}\,\frac{Y_{\alpha,\,\beta}(u)}{u^{\alpha+\beta}(\log\log u)^{1-\alpha}}\geq c_{\alpha,\,\beta}\quad\text{a.s.}
\end{equation}
To this end, we define $\widetilde{D}_\alpha:=(\widetilde{D}_\alpha(y))_{y\geq 0}$ by $$\widetilde{D}_\alpha(y):=D_\alpha(W_\alpha(1)+y)-D_\alpha(W_\alpha(1)),\quad y\geq 0.$$ By the strong Markov property of $D_\alpha$, the process $\widetilde{D}_\alpha$ is a copy of $D_\alpha$ which is further independent of $(D_\alpha(y))_{0\leq y\leq W_\alpha(1)}$. This particularly implies that $\widetilde{Y}_{\alpha,\,\beta}:=(\widetilde{Y}_{\alpha,\,\beta}(u))_{u\geq 0}$ defined by $$\widetilde{Y}_{\alpha,\,\beta}(u):=\int_0^\infty(u-\widetilde{D}_\alpha(y))^\beta\1_{\{\widetilde{D}_\alpha(y)\leq u\}}{\rm d}y,\quad u\geq 0$$ is a copy of $Y_{\alpha,\,\beta}$ which is independent of $\big(D_\alpha(W_\alpha(1)), \int_0^\infty(v-D_\alpha(y))^\beta\1_{\{D_\alpha(y)\leq 1\}}{\rm d}y\big)$ for each $v\geq 1$.
We shall use the following decomposition
\begin{eqnarray}
Y_{\alpha,\,\beta}(u)&=&\int_0^\infty(u-D_\alpha(y))^\beta\1_{\{D_\alpha(y)\leq u\}}{\rm d}y=
\int_0^\infty(u-D_\alpha(y))^\beta\1_{\{D_\alpha(y)\leq 1\}}{\rm d}y\notag\\&+&
\widetilde{Y}_{\alpha,\,\beta}(u-D_\alpha(W_\alpha(1))\1_{\{D_\alpha(W_\alpha(1))\leq u\}}\label{decomposition_y}
\end{eqnarray}
which holds for $u>1$ and can be justified as follows:
\begin{eqnarray*}
&&\int_0^\infty(u-D_\alpha(y))^\beta\1_{\{1<D_\alpha(y)\leq u\}}{\rm d}y\\&=& \int_0^\infty(u-D_\alpha(y+W_\alpha(1)))^\beta\1_{\{D_\alpha(y+W_{\alpha}(1))\leq u\}}{\rm d}y\notag\\
&=&\int_0^\infty(u-D_\alpha(W_\alpha(1))-\widetilde{D}_\alpha(y))^\beta\1_{\{\widetilde{D}_\alpha(y)\leq u-D_\alpha(W_\alpha(1))\}}{\rm d}y\1_{\{D_\alpha(W_\alpha(1))\leq u\}}\\
&=&\widetilde{Y}_{\alpha,\,\beta}(u-D_\alpha(W_\alpha(1))\1_{\{D_\alpha(W_\alpha(1))\leq u\}}. 
\end{eqnarray*}

Our proof of \eqref{logar5} will be based on the following extension of the Borel-Cantelli lemma due to Erd\"{o}s and R\'{e}nyi (Lemma C in \cite{Erdos+Renyi:1959}).
\begin{lemma}\label{bc_reverse_lemma}
Let $(A_k)_{k\in\mn}$ be a sequence of random events such that $\sum_{k\geq 1}\mmp\{A_k\}=\infty$. If
$$
\liminf_{n\to\infty}\frac{\sum_{i=1}^n\sum_{j=1}^{n}\mmp\{A_i\cap A_j\}}{\left(\sum_{k=1}^n\mmp\{A_k\}\right)^2}\leq 1,
$$
then $\mmp\{\underset{k\to\infty}{\lim\sup}\, A_k\}=1$.
\end{lemma}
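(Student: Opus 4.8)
This is the classical Erd\"{o}s--R\'{e}nyi (Kochen--Stone) partial converse to the Borel--Cantelli lemma, and the plan is to prove it by the second-moment method applied to the counting variables of the events. First I would set $S_n:=\sum_{k=1}^n\1_{A_k}$, so that $a_n:=\me[S_n]=\sum_{k=1}^n\mmp\{A_k\}\to\infty$ by hypothesis, while $\me[S_n^2]=\sum_{i=1}^n\sum_{j=1}^n\mmp\{A_i\cap A_j\}$ is exactly the numerator in the condition. The sequence $(S_n)$ is nondecreasing, hence it has an a.s.\ limit $S_\infty:=\lin S_n\in[0,\infty]$, and the event of interest is precisely $\{\limsup_{k\to\infty}A_k\}=\{S_\infty=\infty\}$.

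The engine of the argument is the Paley--Zygmund inequality: for every $\theta\in(0,1)$ and every $n$ with $a_n>0$,
$$\mmp\{S_n\geq\theta a_n\}\geq(1-\theta)^2\,\frac{a_n^2}{\me[S_n^2]}.$$
This follows from the decomposition $a_n=\me[S_n\1_{\{S_n<\theta a_n\}}]+\me[S_n\1_{\{S_n\geq\theta a_n\}}]\leq\theta a_n+\sqrt{\me[S_n^2]}\,\sqrt{\mmp\{S_n\geq\theta a_n\}}$ (Cauchy--Schwarz on the second term) followed by a rearrangement. Since $\me[S_n^2]\geq a_n^2$ holds for every $n$, the one-sided hypothesis $\liminf_n\me[S_n^2]/a_n^2\leq1$ in fact forces $\liminf_n\me[S_n^2]/a_n^2=1$, so there is a subsequence $(n_j)$ along which $a_{n_j}^2/\me[S_{n_j}^2]\to1$, and therefore $\liminf_{j\to\infty}\mmp\{S_{n_j}\geq\theta a_{n_j}\}\geq(1-\theta)^2$.

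To finish, I would transfer this lower bound onto the event $\{S_\infty=\infty\}$. On $\{S_\infty<\infty\}$ the integer-valued nondecreasing sequence $(S_{n_j})$ is eventually equal to the finite constant $S_\infty$, whereas $\theta a_{n_j}\to\infty$; hence $\1_{\{S_{n_j}\geq\theta a_{n_j}\}}\to0$ pointwise on $\{S_\infty<\infty\}$, and bounded convergence gives $\mmp\{S_{n_j}\geq\theta a_{n_j},\,S_\infty<\infty\}\to0$. Consequently $\mmp\{S_\infty=\infty\}\geq\limsup_{j\to\infty}\mmp\{S_{n_j}\geq\theta a_{n_j}\}\geq(1-\theta)^2$, and letting $\theta\downarrow0$ yields $\mmp\{S_\infty=\infty\}=1$, which is the assertion.

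I do not expect a genuine obstacle here, this being a standard lemma. The only two points that need a little care are the elementary observation that $\me[S_n^2]\geq a_n^2$ upgrades the one-sided hypothesis into actual convergence of $a_{n_j}^2/\me[S_{n_j}^2]$ to $1$ along a subsequence (without which the Paley--Zygmund bound could degenerate to $0$), and the bounded-convergence step that carries the lower bound from $\{S_{n_j}\geq\theta a_{n_j}\}$ over to the limit event $\{S_\infty=\infty\}$.
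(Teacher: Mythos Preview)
Your argument is correct and complete: the Paley--Zygmund bound applied to $S_n=\sum_{k\le n}\1_{A_k}$, the observation that $\me[S_n^2]\ge a_n^2$ forces the $\liminf$ in the hypothesis to equal~$1$ and hence yields a subsequence along which $a_{n_j}^2/\me[S_{n_j}^2]\to1$, and the bounded-convergence passage to $\{S_\infty=\infty\}$ all work exactly as you describe.

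As for comparison with the paper: there is nothing to compare. The paper does not prove this lemma at all; it merely states it and attributes it to Erd\H{o}s and R\'enyi (Lemma~C in \cite{Erdos+Renyi:1959}), then immediately uses it in the proof of the lower bound in the law of the iterated logarithm. Your second-moment proof is the standard modern route to this Kochen--Stone-type result and would serve perfectly well as a self-contained justification in place of the citation.
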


Fix any $c\in (0,c_{\alpha,\,\beta})$ and some $r>1$ to be specified later. 
Putting $A_k:=\{Y_{\alpha,\,\beta}(r^k)\geq c f(r^k)\}$ for $k\in\mn$ and using \eqref{riv}, we obtain
$$
-\log\mmp\{A_n\}\quad \sim\quad (c/c_{\alpha,\,\beta})^{(1-\alpha)^{-1}}\log n=:c_0\log n,\quad n\to\infty
$$
which entails $\sum_{k\geq 1}\mmp\{A_k\}=\infty$ because $c_0<1$. Also, for any $\delta>0$ there exists $n_0\in\mn$ such that
\begin{equation}\label{prob_a_k_bounds}
n^{-c_0-\delta}\leq \mmp\{A_n\}\leq n^{-c_0+\delta}
\end{equation}
for all $n\geq n_0$.
Now we have to find an appropriate upper bound for
\begin{eqnarray*}
\mmp\{A_i\cap A_{i+n}\}&=&\mmp\bigg\{Y_{\alpha,\,\beta}(1)\geq c (\log(i\log r))^{1-\alpha}, Y_{\alpha,\,\beta}(r^n)\geq c r^{n(\alpha+\beta)}(\log((n+i)\log r))^{1-\alpha}\bigg\}\\
&&\hspace{-2cm}=\mmp\bigg\{Y_{\alpha,\,\beta}(1) \geq c (\log(i\log r))^{1-\alpha},\int_0^\infty (r^n-D_\alpha(y))^\beta\1_{\{D_\alpha(y)\leq 1\}}{\rm d}y\\
&&\hspace{-2cm}+\widetilde{Y}_{\alpha,\,\beta}(r^n-D_{\alpha}(W_{\alpha}(1))\1_{\{D_{\alpha}(W_{\alpha}(1))\leq r^n\}}\geq c r^{n(\alpha+\beta)}(\log((n+i)\log r))^{1-\alpha}\bigg\}\\
&&\hspace{-2cm}=\mmp\bigg\{Y_{\alpha,\,\beta}(1)\geq c (\log(i\log r))^{1-\alpha},\int_0^\infty (r^n-D_\alpha(y))^\beta\1_{\{D_\alpha(y)\leq 1\}}{\rm d}y\\
&&\hspace{-2cm}+(r^n-D_\alpha(W_\alpha(1)))_{+}^{\alpha+\beta}\widetilde{Y}_{\alpha,\,\beta}(1)\geq c r^{n(\alpha+\beta)}(\log((n+i)\log r))^{1-\alpha}\bigg\}\\
&&\hspace{-2cm}\leq \mmp\bigg\{Y_{\alpha,\,\beta}(1)\geq c(\log(i\log r))^{1-\alpha},r^{-\alpha n}\int_0^\infty (1-r^{-n}D_\alpha(y))^\beta\1_{\{D_\alpha(y)\leq 1 \}}{\rm d}y\\
&&\hspace{-2cm}+\widetilde{Y}_{\alpha,\,\beta}(1)\geq c (\log((n+i)\log r))^{1-\alpha}\bigg\}\\
&&\hspace{-2cm}\leq \mmp\bigg\{Y_{\alpha,\,\beta}(1)\geq c (\log(i\log r))^{1-\alpha}, \Delta_n+\widetilde{Y}_{\alpha,\,\beta}(1)\geq c(\log((n+i)\log r))^{1-\alpha}\bigg\}
\end{eqnarray*}
for $i\geq [\log^{-1}r]$ and $n\in\mn$, where $\Delta_n:=\gamma_r r^{-\alpha n}W_\alpha(1)$ and $\gamma_r:=(1-r^{-1})^\beta\vee 1$. For the first equality we have used self-similarity of $Y_{\alpha,\,\beta}$ (see Proposition \ref{simil}); the second equality is equivalent to \eqref{decomposition_y}; the third equality is a consequence of self-similarity of $\widetilde{Y}_{\alpha,\,\beta}$ together with independence of $\widetilde{Y}_{\alpha,\,\beta}$ and all the other random variables which appear in that equality; the last inequality follows from
$$\int_0^\infty (1-r^{-n}D_\alpha(y))^\beta\1_{\{D_\alpha(y)\leq 1\}}{\rm d}y \leq \gamma_r W_\alpha(1).$$
Further,
\begin{eqnarray*}
&& \mmp\{A_i\cap A_{i+n}\}-\mmp\{A_i\}\mmp\{A_{i+n}\}\\&\leq& \mmp\bigg\{c(\log((n+i)\log r))^{1-\alpha}-\Delta_n\leq \widetilde{Y}_{\alpha,\,\beta}(1)< c(\log((n+i)\log r))^{1-\alpha},\\&& \Delta_n\leq c(\log((n+i)\log r))^{1-\alpha}\bigg\}+\mmp\{\Delta_n> c(\log((n+i)\log r))^{1-\alpha}\}\\&:=&J_1(n,i)+J_2(n,i)=:J(n,i)
\end{eqnarray*}
for $i\geq [\log^{-1}r]$ and $n\in\mn$.

Suppose we can prove that
\begin{equation}\label{lil_lower_bound1}
\phi_i:=\sum_{n\geq 1} J(n,i)=O(i^{-c_0+\delta}),\quad i\to\infty
\end{equation}
for $\delta$ in \eqref{prob_a_k_bounds}, which further satisfies $c_0+3\delta<1$. 
Then
\begin{eqnarray*}
&&\liminf_{n\to\infty}\frac{\sum_{i=1}^n\sum_{j=1}^n\mmp\{A_i\cap A_j\}}{\left(\sum_{k=1}^n\mmp\{A_k\}\right)^2}=\liminf_{n\to\infty}\frac{2\sum_{i=1}^n\sum_{j=1}^{n-i}\mmp\{A_i\cap A_{i+j}\}}{\left(\sum_{k=1}^n\mmp\{A_k\}\right)^2}\\
&&\hspace{1cm}\leq\liminf_{n\to\infty}\frac{2\sum_{i=1}^n\sum_{j=1}^{n-i}\mmp\{A_i\}\mmp\{A_{i+j}\}+2\sum_{i=1}^n\phi_i}{\left(\sum_{k=1}^n\mmp\{A_k\}\right)^2}\leq 1+2\liminf_{n\to\infty}\frac{\sum_{i=1}^
n\phi_i}{\left(\sum_{k=1}^n\mmp\{A_k\}\right)^2}=1,
\end{eqnarray*}
thereby proving that $\mmp\{\underset{k\to\infty}{\lim\sup}\, A_k\}=1$ by Lemma \ref{bc_reverse_lemma}. Thus, $$\underset{u\to+\infty}{\lim\sup}\,\frac{Y_{\alpha,\,\beta}(u)}{u^{\alpha+\beta}(\log\log u)^{1-\alpha}}\geq \underset{k\to\infty}{\lim\sup}\,\frac{Y_{\alpha,\,\beta}(r^k)}{r^{k(\alpha+\beta)}(\log\log r^k)^{1-\alpha}}\geq c$$ which shows that \eqref{lil_lower_bound1} entails \eqref{logar5}.


\noindent {\sc Proof of \eqref{lil_lower_bound1}}. Pick both $\delta_1$ in \eqref{riv2} and some $\varepsilon>0$ so small that
\begin{equation}\label{aux3}
c_0(1-\delta_1)(1-\varepsilon)^{(1-\alpha)^{-1}}=(1-\delta_1)((c/c_{\alpha,\,\beta})(1-\varepsilon))^{(1-\alpha)^{-1}}\geq c_0-\delta.
\end{equation}
Using now \eqref{riv2} and recalling that the density $f_{\alpha,\,\beta}$ of $\widetilde{Y}_{\alpha,\,\beta}$ is nonincreasing we infer
\begin{eqnarray*}
J_1(n,i)&\leq&\me \left(\Delta_n f_{\alpha,\,\beta}\left(c (\log((n+i)\log r))^{1-\alpha}-\Delta_n\right)\right)\1_{\{\Delta_n\leq c(\log((n+i)\log r))^{1-\alpha}\}}\\
&\leq& c_1 \me \left(\Delta_n \exp\left(-(1-\delta_1)c_{\alpha,\,\beta}^{-(1-\alpha)^{-1}}\left(\left(c(\log((n+i)\log r))^{1-\alpha}-\Delta_n\right)^{(1-\alpha)^{-1}}\right)\right)\right)\\&\times& (\1_{\{\Delta_n\leq \varepsilon c(\log((n+i)\log r))^{1-\alpha}\}}+\1_{\{\Delta_n>\varepsilon c(\log((n+i)\log r))^{1-\alpha}\}})\\&=&J_{11}(n,i)+J_{12}(n,i).
\end{eqnarray*}
An application of \eqref{aux3} yields
\begin{eqnarray*}
J_{11}(n,i)\leq c_1 \me \big(\Delta_n \exp\big(-(c_0-\delta)\log((n+i)\log r)\big)\leq \frac{c_1}{(\log r)^{c_0-\delta}}\frac{\me\Delta_n}{ (n+i)^{c_0-\delta}}.
\end{eqnarray*}
In view of \eqref{riv} with $\beta=0$, for any $\delta_2\in (0,1)$ there exists $c_2>0$ such that
\begin{equation}\label{aux4}
\mmp\{W_\alpha(1)>x\}\leq c_2\exp\big(-(1-\delta_2)(x/c_{\alpha,\,\beta})^{(1-\alpha)^{-1}}\big)
\end{equation}
for all $x\geq 0$. Let $r>1$ satisfy $\varepsilon r^\alpha>1$ with $\varepsilon$ as in \eqref{aux3}. With this choice of $r$, we can pick $\delta_2>0$ so small and $q>1$ so close to $1$ that $$c_0(1/q)(1-\delta_2)(\varepsilon\gamma_rr^\alpha)^{(1-\alpha)^{-1}}=(1/q)(1-\delta_2)(\varepsilon\gamma_r(c/c_{\alpha,\,\beta})r^\alpha )^{(1-\alpha)^{-1}}\geq c_0-\delta.$$ Then, using H\"{o}lder's inequality with $q$ as above and $p>1$ satisfying $1/p+1/q=1$ gives
\begin{eqnarray*}
J_{12}(n,i)&\leq& c_1 \me \Delta_n\1_{\{\Delta_n > \varepsilon c (\log((n+i)\log r))^{1-\alpha}\}}\\&\leq& c_1(\me \Delta_n^p)^{1/p}
\left(\mmp\{W_\alpha(1) > \varepsilon\gamma_r c r^{\alpha  n}(\log((n+i)\log r))^{1-\alpha}\}\right)^{1/q}\\&\leq&
c_1c_2^{1/q}(\me \Delta_n^p)^{1/p}\exp\big(-(1/q)(1-\delta_2)(\varepsilon\gamma_r(c/c_{\alpha,\,\beta})r^{\alpha n})^{(1-\alpha)^{-1}}\log((n+i)\log r)\big)\\&\leq& \frac{c_1c_2^{1/q}}{ (\log r)^{c_0-\delta}}\frac{(\me \Delta_n^p)^{1/p}}{(n+i)^{c_0-\delta}}.
\end{eqnarray*}
Put $c_3:=c_1(c_2^{1/q}+1)(\log r)^{-c_0+\delta}$. Since $c_4:=\sum_{n\geq 1}\left(\me \Delta_n^p \right)^{1/p}<\infty$, we infer $$\sum_{n\geq 1}J_1(n,i)\leq c_3\sum_{n\geq 1}\frac{\left(\me \Delta_n^p \right)^{1/p}}{ (n+i)^{c_0-\delta}}\leq \frac{c_3c_4}{i^{c_0-\delta}}$$ for each $i\in\mn$.   
It remains to treat $J_2(n,i)$. Increasing $r$ if needed, we can assume that
$$(1-\delta_2)(\gamma_r(c/c_{\alpha,\,\beta})r^\alpha )^{(1-\alpha)^{-1}}\geq 2.$$
Then, in view of \eqref{aux4}, $$J_2(n,i)\leq
c_2\exp\big(-(1-\delta_2)(\gamma_r(c/c_{\alpha,\,\beta})r^{\alpha
n})^{(1-\alpha)^{-1}}\log((n+i)\log r)\big)\leq \frac{c_2}{(\log
r)^2}\frac{1}{(n+i)^2},$$ whence
$$\sum_{n\geq 1}J_2(n,i)\leq \frac{c_2}{(\log r)^2}\sum_{n\geq i+1}\frac{1}{n^2}=O(i^{-1})=O(i^{-c_0+\delta}).$$
Thus, relation \eqref{lil_lower_bound1} has been checked, and the proof of the law of iterated logarithm for large times is complete.

A perusal of the proof above reveals that the proof for small
times can be done along similar lines. When defining sequences
$(r^n)$ just take $r\in (0,1)$ rather than $r>1$. Self-similarity
of $Y_{\alpha,\,\beta}$ does the rest. 
We omit further details.

\section{Appendix}

Lemma \ref{rivero} is a consequence of Proposition 2 in \cite{Rivero:2003} and Corollary 2.2 in \cite{Pardo+Rivero+Schaik:2013}.
\begin{lemma}\label{rivero}
For $R:=(R(t))_{t\geq 0}$ a subordinator with positive killing
rate, the random variable $\int_0^\infty \exp(-R(t)){\rm d}t$ has
bounded and nonincreasing density $f$. If the Laplace exponent
$\Psi$ of $R$ is regularly varying at $\infty$ of index $\gamma\in
(0,1)$, then
\begin{equation*}\label{riv11}
-\log\mmp\bigg\{\int_0^\infty \exp(-R(t)){\rm d}t>x\bigg\}\quad \sim\quad -\log f(x)\quad\sim\quad (1-\gamma)\Phi(x),\quad x\to\infty,
\end{equation*}
where $\Phi(t)$ is generalized inverse of $t/\Psi(t)$. 
\end{lemma}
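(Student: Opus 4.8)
The plan is to assemble the lemma from the two quoted references plus a short monotonicity interpolation. Throughout write $I:=\int_0^\infty e^{-R(t)}\,{\rm d}t$ and $\overline{F}(x):=\mmp\{I>x\}$. Since $R$ has positive killing rate it is sent to $+\infty$ at an a.s.\ finite time, so $I<\infty$ a.s.; moreover the subordinators to which the lemma is applied are driftless (which under the second hypothesis is automatic, a positive drift forcing $\Psi$ to be regularly varying of index $1$), and for a driftless subordinator the law of $I$ has unbounded support, so $\overline{F}(x)>0$ for every $x>0$. For the first assertion I would invoke Corollary 2.2 in \cite{Pardo+Rivero+Schaik:2013}, which yields that $I$ has a bounded, nonincreasing density $f$; since $\overline{F}(x)=\int_x^\infty f(y)\,{\rm d}y>0$ and $f$ is nonincreasing, $f(x)>0$ for all $x>0$. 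For the tail rate I would apply Proposition 2 in \cite{Rivero:2003}: as $\Psi$ is regularly varying at $\infty$ of index $\gamma\in(0,1)$, the function $t\mapsto t/\Psi(t)$ is regularly varying of index $1-\gamma\in(0,1)$ and increases to $\infty$, whence its generalized inverse $\Phi$ is regularly varying of index $1/(1-\gamma)$ and Rivero's result gives $-\log\overline{F}(x)\sim(1-\gamma)\Phi(x)$ as $x\to\infty$. Two consequences of regular variation I use below are $\log x=o(\Phi(x))$ and $\Phi(cx)\sim c^{1/(1-\gamma)}\Phi(x)$ for fixed $c>0$.

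It remains to transfer the logarithmic asymptotics from $\overline{F}$ to $f$, and this is exactly where monotonicity of $f$ is used. Fix $\varepsilon\in(0,1)$. Since $f$ is nonincreasing, for every $x>0$
\begin{gather*}
\overline{F}(x)-\overline{F}((1+\varepsilon)x)=\int_x^{(1+\varepsilon)x}f(y)\,{\rm d}y\leq\varepsilon x\,f(x),\\
\overline{F}\big(x/(1+\varepsilon)\big)-\overline{F}(x)=\int_{x/(1+\varepsilon)}^{x}f(y)\,{\rm d}y\geq\frac{\varepsilon x}{1+\varepsilon}\,f(x).
\end{gather*}
Because $-\log\big(\overline{F}((1+\varepsilon)x)/\overline{F}(x)\big)\sim(1-\gamma)\Phi(x)\big((1+\varepsilon)^{1/(1-\gamma)}-1\big)\to\infty$, the first display forces $\overline{F}(x)-\overline{F}((1+\varepsilon)x)\geq\tfrac12\overline{F}(x)$ for large $x$, hence $-\log f(x)\leq-\log\overline{F}(x)+\log(2\varepsilon x)$; the second display gives $-\log f(x)\geq-\log\overline{F}\big(x/(1+\varepsilon)\big)+\log\big(\varepsilon x/(1+\varepsilon)\big)$. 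Dividing by $(1-\gamma)\Phi(x)$ and letting $x\to\infty$, using $-\log\overline{F}(x)\sim(1-\gamma)\Phi(x)$, $-\log\overline{F}(x/(1+\varepsilon))\sim(1-\gamma)(1+\varepsilon)^{-1/(1-\gamma)}\Phi(x)$ and $\log x=o(\Phi(x))$, we obtain
\[
(1+\varepsilon)^{-1/(1-\gamma)}\leq\liminf_{x\to\infty}\frac{-\log f(x)}{(1-\gamma)\Phi(x)}\leq\limsup_{x\to\infty}\frac{-\log f(x)}{(1-\gamma)\Phi(x)}\leq1.
\]
Letting $\varepsilon\to0$ gives $-\log f(x)\sim(1-\gamma)\Phi(x)$, which combined with the rate for $\overline{F}$ is the claim.

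The main obstacle is not in the write-up above, where everything substantial is imported from the two references: the only new ingredient is the elementary sandwiching of the preceding paragraph, which is routine once one has a nonincreasing density together with the precise tail rate. Were a self-contained proof wanted, the two genuinely hard points are precisely those that are cited. First, the existence, boundedness and monotonicity of the density $f$: this rests on the structure of exponential functionals of subordinators --- e.g.\ the distributional identity $I\overset{d}{=}A+e^{-R(s)}I'$ with $I'$ an independent copy of $I$, and the integro-differential equation satisfied by $f$ --- and, for the monotonicity, on the relevant subordinators being driftless with infinite L\'evy measure. Second, the sharp logarithmic tail $-\log\overline{F}(x)\sim(1-\gamma)\Phi(x)$: this requires a Mellin-transform analysis of the law of $I$ combined with Tauberian-type estimates, as carried out in \cite{Rivero:2003}.
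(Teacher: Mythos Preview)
Your proposal is correct and follows the paper's own approach: the paper simply records that the lemma ``is a consequence of Proposition 2 in \cite{Rivero:2003} and Corollary 2.2 in \cite{Pardo+Rivero+Schaik:2013}'' without further argument, and you invoke precisely the same two results for the density properties and the tail rate respectively. The only addition is your explicit monotone sandwiching to pass from $-\log\overline{F}(x)$ to $-\log f(x)$, which the paper leaves absorbed in the citations; your argument there is standard and correct.
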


The following result is an important ingredient for the proofs of
Theorems \ref{xxx} and \ref{fin-dim}.
\begin{lemma}\label{continuity_of_convolution_in_D}
Assume that $f_n$ are right-continuous and nondecreasing for each
$n\in\mn_0$ and that $\lim_{n\to\infty} f_n= f_0$ locally
uniformly on $[0,\infty)$. Then, for any $\varepsilon\in(0,1)$ and
any $\beta\in\mr$,
$$
\lim_{n\to\infty} \int_{[0,\,\varepsilon u]}(u-y)^\beta {\rm
d}f_n(y)= \int_{[0,\,\varepsilon u]}(u-y)^{\beta}{\rm d}f_0(y)
$$
locally uniformly on $(0,\infty)$.
\end{lemma}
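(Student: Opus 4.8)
The plan is to get rid of the Lebesgue--Stieltjes measure by integrating by parts, turning the integral into an ordinary Riemann integral of $f_n$, after which the conclusion drops out of the local uniform convergence $f_n\to f_0$. The point of restricting $u$ to a compact subset of $(0,\infty)$ and of truncating the integral at $\varepsilon u$ with $\varepsilon<1$ (rather than at $u$) is precisely that on $[0,\varepsilon u]$ the integrand $g(y):=(u-y)^\beta$ is continuously differentiable and both $g$ and $g'(y)=-\beta(u-y)^{\beta-1}$ are bounded uniformly in $u$ ranging over a fixed compact $[a,b]\subset(0,\infty)$; near $y=u$, or near $u=0$, these quantities would blow up when $\beta<0$, so the restrictions are exactly what the argument needs.

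First I would record the integration-by-parts identity for a right-continuous nondecreasing $F$ and a $C^1$ function $G$,
$$\int_{[0,\,b]}G(y)\,{\rm d}F(y)=G(b)F(b)-G(0)F(0-)-\int_0^b F(y)G'(y)\,{\rm d}y,$$
(the term $G(0)F(0-)$ accounting for a possible atom of ${\rm d}F$ at $0$), and apply it with $F=f_n$, $G(y)=(u-y)^\beta$, $b=\varepsilon u$ to obtain
$$\int_{[0,\,\varepsilon u]}(u-y)^\beta\,{\rm d}f_n(y)=\big((1-\varepsilon)u\big)^\beta f_n(\varepsilon u)-u^\beta f_n(0-)+\beta\int_0^{\varepsilon u}f_n(y)(u-y)^{\beta-1}\,{\rm d}y,$$
together with the same identity for $f_0$. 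In the applications to Theorems~\ref{xxx} and \ref{fin-dim} one has $f_n(0-)=f_0(0-)=0$ (matching $\nu(y)=0$ and $W_\alpha(y)=0$ for $y<0$), so the middle term disappears; in general it is harmless since local uniform convergence forces $f_n(0-)\to f_0(0-)$. Subtracting the two identities writes $\int_{[0,\varepsilon u]}(u-y)^\beta\,{\rm d}(f_n-f_0)(y)$ as two boundary terms plus $\beta\int_0^{\varepsilon u}\big(f_n(y)-f_0(y)\big)(u-y)^{\beta-1}\,{\rm d}y$.

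Finally I would fix $0<a<b<\infty$, set $\delta_n:=\sup_{0\le y\le\varepsilon b}|f_n(y)-f_0(y)|$, which tends to $0$ by hypothesis, and bound $\sup_{u\in[a,b]}$ of the three pieces: the first by $\delta_n\sup_{u\in[a,b]}\big((1-\varepsilon)u\big)^\beta$, the second by $|f_n(0-)-f_0(0-)|\sup_{u\in[a,b]}u^\beta$, and the third by $|\beta|\,\delta_n\sup_{u\in[a,b]}\int_0^{\varepsilon u}(u-y)^{\beta-1}\,{\rm d}y$, where $\int_0^{\varepsilon u}(u-y)^{\beta-1}\,{\rm d}y=\int_{(1-\varepsilon)u}^{u}z^{\beta-1}\,{\rm d}z$ equals $\beta^{-1}u^\beta\big(1-(1-\varepsilon)^\beta\big)$ for $\beta\ne0$ and $\log\frac{1}{1-\varepsilon}$ for $\beta=0$, hence is bounded for $u\in[a,b]$. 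All three bounds are $O(\delta_n)+o(1)$ uniformly in $u\in[a,b]$, which is the asserted local uniform convergence on $(0,\infty)$. There is no serious obstacle; the one place requiring care is the correct bookkeeping of the left endpoint $0$ in the integration-by-parts step (the role of $f_n(0-)$ and of a possible atom of ${\rm d}f_n$ at $0$), and checking that the bounding constants depend only on $a,b,\beta,\varepsilon$ and not on $n$.
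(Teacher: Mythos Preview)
Your proposal is correct and follows essentially the same route as the paper: integrate by parts to obtain the boundary terms $((1-\varepsilon)u)^\beta f_n(\varepsilon u)$, $u^\beta f_n(0-)$ and the ordinary integral $\beta\int_0^{\varepsilon u}(u-y)^{\beta-1}f_n(y)\,{\rm d}y$, then bound each difference by $\delta_n$ times a constant depending only on $a,b,\beta,\varepsilon$. The paper writes $f_n(0)$ rather than $f_n(0-)$ (in the intended applications these coincide and vanish) and does not single out $\beta=0$, but otherwise the arguments are line-for-line the same.
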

\begin{proof}
Fix positive $a<b$. Integrating by parts, we obtain
\begin{eqnarray*}
\int_{[0,\,\varepsilon u]}(u-y)^\beta{\rm
d}f_n(y)&=&(1-\varepsilon)^\beta u^\beta f_n(\varepsilon
u)-u^\beta f_n(0)+\beta\int_0^{\varepsilon
u}(u-y)^{\beta-1}f_n(y){\rm d}y
\end{eqnarray*}
for $n\in\mn_0$. The claim follows from the relations
\begin{eqnarray*}
\sup_{u\in[a,\,b]}\left|u^\beta f_n(\varepsilon u)-u^\beta f_0(\varepsilon u)\right|&\leq& (a^{\beta}\vee b^{\beta})\sup_{u\in[0,\,b]}
\left|f_n(u)-f_0(u)\right|\to 0;\\
\sup_{u\in[a,\,b]}\left|u^{\beta}f_n(0)-u^{\beta}f_0(0)\right|&\leq&
(a^{\beta}\vee b^{\beta})\left|f_n(0)-f_0(0)\right|\to 0
\end{eqnarray*}
and
\begin{eqnarray*}
&&\sup_{u\in[a,\,b]}\bigg|\int_0^{\varepsilon
u}(u-y)^{\beta-1}f_n(y){\rm d}y-\int_0^{\varepsilon
u}(u-y)^{\beta-1}f_0(y){\rm d}y\bigg|\\&\leq&
\sup_{u\in[a,\,b]}\int_0^{\varepsilon
u}(u-y)^{\beta-1}\left|f_n(y)-f_0(y)\right|{\rm
d}y\\&\leq&\sup_{u\in[0,\,b]}\left|f_n(u)-f_0(u)\right|\sup_{u\in[a,\,b]}\int_0^{\varepsilon
u}(u-y)^{\beta-1}{\rm
d}y\\&=&\sup_{u\in[0,\,b]}\left|f_n(u)-f_0(u)\right|(a^{\beta}\vee
b^{\beta})|\beta|^{-1}|1-(1-\varepsilon)^\beta|\to 0
\end{eqnarray*}
as $n\to\infty$.
\end{proof}

Recall that $(\nu(t))_{t\geq 0}$ is the first-passage time process defined by $\nu(t)=\inf\{k\in\mn: S_k>t\}$ for $t\geq 0$, where $(S_k)_{k\in\mn_0}$ is a zero-delayed standard random walk with jumps distributed as a positive random variable $\xi$. Lemma \ref{iks13} is Lemma A.1 in \cite{Iksanov:2013}.
\begin{lemma}\label{iks13}
For any finite $d>c\ge 0$, any $T>0$ and any $r>0$
$$t^{-r}\underset{u\in [0,\,T]}{\sup}\,\big(\nu(ut-c)-\nu(ut-d)\big)\quad\tp\quad 0,\quad t\to\infty.$$
\end{lemma}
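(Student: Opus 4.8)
The plan is to show that the increments of $\nu$ over windows of fixed length $\delta:=d-c$ cannot be large, uniformly over $[0,Tt]$: with probability tending to one their maximum grows only logarithmically in $t$, and $t^{-r}\log t\to 0$ for every $r>0$. The key reduction is combinatorial. Since $\nu$ is nondecreasing and vanishes on $(-\infty,0)$, for each $u\in[0,T]$ the quantity $\nu(ut-c)-\nu(ut-d)$ counts the indices $k$ with $ut-d<S_k\le ut-c$; as the $S_k$ are strictly increasing these indices are consecutive, say $k,\dots,k+j-1$, they satisfy $S_{k+j-1}\le ut-c\le Tt$ hence $k+j-1<\nu(Tt)$, and if $j\ge 2$ then $\xi_{k+1}+\dots+\xi_{k+j-1}=S_{k+j-1}-S_k<\delta$, so the $j-1$ consecutive jumps $\xi_{k+1},\dots,\xi_{k+j-1}$ are each $<\delta$. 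Writing $L_m$ for the length of the longest run of consecutive indices $i\le m$ with $\xi_i<\delta$ (and $L_m:=0$ if there is none), this yields
\begin{equation}\label{iks13:reduction}
\sup_{u\in[0,T]}\big(\nu(ut-c)-\nu(ut-d)\big)\le 1+L_{\nu(Tt)}\qquad\text{a.s.}
\end{equation}

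Next I would control the two quantities on the right. Fix $\alpha'\in(\alpha,1)$. Since $a(t):=\mmp\{\xi>t\}$ is regularly varying of index $-\alpha$, we have $a(t)\,t^{\alpha'}\to\infty$, and since $a(t)\nu(Tt)$ is tight by the functional limit theorem \eqref{flt_nu},
$$\mmp\{\nu(Tt)>t^{\alpha'}\}\le\mmp\{a(t)\nu(Tt)>a(t)\,t^{\alpha'}\}\longrightarrow 0,\qquad t\to\infty.$$
(One may equally invoke $\mmp\{\nu(Tt)>n\}=\mmp\{S_n\le Tt\}\le(1-a(Tt))^n\le e^{-na(Tt)}$ with $n=\lceil t^{\alpha'}\rceil$.) For the run, put $p:=\mmp\{\xi<\delta\}$, which is $<1$ because $\xi$ is unbounded under \eqref{1}; if $p=0$ the right-hand side of \eqref{iks13:reduction} is at most $1$ and we are done, so let $p\in(0,1)$. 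A union bound over the starting position of a run gives $\mmp\{L_m\ge j\}\le m\,p^{j}$ for integers $m,j\ge 1$, so with $m=\lceil t^{\alpha'}\rceil$ and $j=j_t:=\lceil C\log t\rceil$ for a fixed $C>\alpha'/\log(1/p)$,
$$\mmp\big\{L_{\lceil t^{\alpha'}\rceil}\ge j_t\big\}\le\lceil t^{\alpha'}\rceil\,p^{j_t}=O\big(t^{\alpha'-C\log(1/p)}\big)\longrightarrow 0,\qquad t\to\infty.$$

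Combining, on the event $\{\nu(Tt)\le\lceil t^{\alpha'}\rceil\}\cap\{L_{\lceil t^{\alpha'}\rceil}<j_t\}$, which has probability tending to $1$ (using monotonicity of $m\mapsto L_m$), inequality \eqref{iks13:reduction} gives $\sup_{u\in[0,T]}(\nu(ut-c)-\nu(ut-d))\le 1+j_t=O(\log t)$; multiplying by $t^{-r}$ and sending $t\to\infty$ finishes the proof. The only genuinely delicate point is the bookkeeping in the combinatorial reduction — identifying exactly which jumps are forced to be small and checking that their indices remain below $\nu(Tt)$ (including the boundary index $k=0$ that occurs when $ut-d<0\le ut-c$); everything else is routine. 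I would also note that the heavy-tail hypothesis enters only through a polynomial-in-$t$ high-probability upper bound on $\nu(Tt)$, which is available whenever $\xi>0$ almost surely, so the lemma remains valid well beyond the regime \eqref{1}.
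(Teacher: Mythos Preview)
Your argument is correct. The paper itself does not prove this lemma; it simply cites it as Lemma~A.1 of \cite{Iksanov:2013}. Your approach supplies a self-contained and elementary derivation: you reduce the supremum of increments of $\nu$ over windows of fixed length $d-c$ to $1$ plus the longest run of consecutive jumps smaller than $d-c$ among the first $\nu(Tt)$ steps, then bound $\nu(Tt)$ polynomially in $t$ with high probability and the longest run by $O(\log t)$ via a standard union bound on i.i.d.\ Bernoulli runs. The combinatorial bookkeeping---identifying the consecutive block of indices $k$ with $S_k\in(ut-d,ut-c]$, extracting the $j-1$ small jumps, and checking that the relevant indices lie below $\nu(Tt)$, including the boundary case $ut-d<0\le ut-c$---is handled correctly. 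As you note, the heavy-tail assumption \eqref{1} enters only through a polynomial-in-$t$ high-probability bound on $\nu(Tt)$, which is available for any a.s.\ positive $\xi$ (for instance via $\mmp\{\nu(Tt)>n\}=\mmp\{S_n\le Tt\}\le(\mmp\{\xi\le Tt\})^n$ together with a linear lower bound on $S_n$), so your proof is valid in greater generality than the paper's standing hypotheses.
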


The two results given next are needed for the proof of Proposition \ref{renewal_modulus_of_continuity}

\begin{lemma}\label{exp_mom_nu_lemma}
Assume that $\mmp\{\xi>t\}\sim t^{-\alpha}\ell(t)$ for some
$\alpha\in (0,1)$ and some $\ell$ slowly varying at infinity. Then
$\sup_{t\geq 0}\me e^{\lambda\mmp\{\xi>t\}\nu(t)}<\infty$ for
every $\lambda>0$.
\end{lemma}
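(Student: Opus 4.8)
The plan is to bound $\me e^{\lambda\mmp\{\xi>t\}\nu(t)}$ uniformly in $t$ by treating small and large $t$ separately. Write $a(t):=\mmp\{\xi>t\}$, so that $0\le a(t)\le 1$ for every $t\ge 0$. For $t$ in a bounded interval $[0,t_0]$ the factor $a(t)$ is harmless: since $\nu$ is nondecreasing, $a(t)\nu(t)\le\nu(t_0)$, so it is enough to know that $\me e^{\lambda\nu(t_0)}<\infty$. Because $\xi>0$ almost surely, $\me e^{-s\xi}\to\mmp\{\xi=0\}=0$ as $s\to\infty$, so one may fix $s>0$ with $e^{\lambda}\me e^{-s\xi}<1$; then $\mmp\{\nu(t_0)\ge n\}=\mmp\{S_{n-1}\le t_0\}\le e^{st_0}(\me e^{-s\xi})^{n-1}$ by Markov's inequality applied to $e^{-sS_{n-1}}$, and summing a geometric series yields $\me e^{\lambda\nu(t_0)}<\infty$.

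The core of the argument is a tail estimate for $\nu(t)$ at large $t$. Since the increments $\xi_k$ are strictly positive, the event $\{S_m\le t\}$ forces $\xi_k\wedge(\delta t)\le\xi_k$ for every $k\le m$, whence $\mmp\{S_m\le t\}\le\mmp\{\sum_{k=1}^m(\xi_k\wedge\delta t)\le t\}$ for any $\delta\in(0,1)$. A Chernoff bound with parameter $h>0$ gives $\mmp\{S_m\le t\}\le e^{h}\bigl(\me e^{-h(\xi\wedge\delta t)/t}\bigr)^m$, and bounding $1-e^{-y}$ from below by the chord $\frac{1-e^{-h\delta}}{h\delta}y$ on $[0,h\delta]$, together with the Karamata asymptotics $\me(\xi\wedge\delta t)=\int_0^{\delta t}\mmp\{\xi>u\}\,{\rm d}u\sim(1-\alpha)^{-1}\delta^{1-\alpha}\,t\,a(t)$, gives, after choosing $h=1/\delta$, a finite constant $K=e^{1/\delta}$ and a threshold $t_0=t_0(\delta)$ such that
$$
\mmp\{S_m\le t\}\le K\,e^{-c_\delta\,a(t)\,m},\qquad t\ge t_0,\ m\ge 0,\qquad c_\delta:=\frac{(1-e^{-1})\,\delta^{-\alpha}}{2(1-\alpha)}.
$$
The decisive feature is that $c_\delta\to\infty$ as $\delta\to 0$, so, given $\lambda$, I fix once and for all a $\delta$ so small that $c_\delta>\lambda+1$.

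Finally I would use the elementary identity
$$
\me z^{\nu(t)}=1+(z-1)\sum_{m\ge 0}z^m\,\mmp\{S_m\le t\}\qquad(z\ge 1),
$$
which follows from $z^{\nu(t)}=1+(z-1)\sum_{j=1}^{\nu(t)}z^{j-1}$ on taking expectations and using $\mmp\{\nu(t)\ge j\}=\mmp\{S_{j-1}\le t\}$. Inserting $z=e^{\lambda a(t)}\ge 1$ and the bound above, the geometric series converges because $c_\delta>\lambda$, so that for $t\ge t_0$
$$
\me e^{\lambda a(t)\nu(t)}\le 1+\bigl(e^{\lambda a(t)}-1\bigr)\,\frac{K}{1-e^{-(c_\delta-\lambda)a(t)}} .
$$
As $t\to\infty$ we have $a(t)\to 0$, hence $(e^{\lambda a(t)}-1)/(1-e^{-(c_\delta-\lambda)a(t)})\to\lambda/(c_\delta-\lambda)$, and the right-hand side stays bounded; combined with the $[0,t_0]$ bound this yields $\sup_{t\ge 0}\me e^{\lambda a(t)\nu(t)}<\infty$. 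The only delicate point is the balancing act in the second paragraph: the truncation level $\delta t$, the Chernoff parameter $h=1/\delta$, and the slowly varying part of $\mmp\{\xi>t\}$ must be coordinated so that the exponential rate $c_\delta$ can be pushed above the prescribed $\lambda$ while the constant $K$ remains finite; note also that without the factor $z-1$ supplied by the summation-by-parts identity the estimate would carry a spurious factor $1/a(t)$ and fail to be uniform.
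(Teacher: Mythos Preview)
Your proof is correct and follows essentially the same route as the paper: both arguments write $\me e^{\lambda a(t)\nu(t)}$ via the summation-by-parts identity $\me z^{\nu(t)}=1+(z-1)\sum_{m\ge 0}z^m\mmp\{S_m\le t\}$, bound $\mmp\{S_m\le t\}$ by a Chernoff/Markov inequality with exponent of order $1/t$, and then use regular variation to show the resulting geometric ratio is strictly below $1$ with the prefactor $(z-1)\asymp a(t)$ cancelling the $1/a(t)$ from the geometric sum. The only cosmetic difference is that the paper applies the Tauberian theorem directly to $1-\me e^{-s\xi}$ with $s=c/t$, whereas you truncate at level $\delta t$ and invoke Karamata's theorem on $\me(\xi\wedge\delta t)$; both devices produce the same decay rate $\exp(-c\,a(t)\,m)$ with $c$ tunable above $\lambda$.
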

\begin{proof}
As before, we shall use the notation $a(t)=\mmp\{\xi>t\}$. Fix any
$\lambda>0$. Since $\nu(t)$ has finite exponential moments of all
orders for all $t\geq 0$, it suffices to show that
$$
\underset{t\to\infty}{\lim\sup}\,\me e^{\lambda
a(t)\nu(t)}<\infty.
$$
We have
\begin{eqnarray*}
\frac{\me e^{\lambda a(t)\nu(t)}-1}{e^{\lambda a(t)}-1}e^{\lambda a(t)}&=&\sum_{k\geq 1}e^{\lambda a(t) k}\mmp\{\nu(t)\geq k\}=\sum_{k\geq 1}e^{\lambda a(t) k}\mmp\{S_{k-1}\leq t\}\\
&=&\sum_{k\geq 1}e^{\lambda a(t) k}\mmp\{e^{-sS_{k-1}}\geq e^{-st}\}\leq e^{st}\sum_{k\geq 1}e^{\lambda a(t) k}(\phi(s))^{k-1}\\
&=&\frac{e^{st}e^{\lambda a(t)}}{1-e^{\lambda a(t)}\phi(s)}
\end{eqnarray*}
for any $s>0$ such that $e^{\lambda a(t)}\phi(s)<1$. Pick an
arbitrary $c>(\lambda/\Gamma(1-\alpha))^{1/\alpha}$ and note that
\begin{equation}\label{exp_mom_nu_proof1}
\frac{1-e^{-\lambda a(t)}}{1-\phi(c/t)}\sim \frac{\lambda a(t)}{\Gamma(1-\alpha)a(t/c)}\to \frac{\lambda c^{-\alpha}}{\Gamma(1-\alpha)}<1
\end{equation}
as $t\to\infty$, where the asymptotics $1-\phi(z)\sim \Gamma(1-\alpha)a(1/z)$ as $z\to 0+$ follows from Karamata's Tauberian theorem (Theorem 1.7.1 in
\cite{BGT}). From \eqref{exp_mom_nu_proof1} we infer $e^{\lambda a(t)}\phi(c/t)<1$ for all $t>0$ large enough. Therefore,
$$
\me e^{\lambda a(t)\nu(t)}-1\leq e^c\frac{e^{\lambda a(t)}-1}{1-e^{\lambda a(t)}\phi(c/t)}.$$ Since, by \eqref{exp_mom_nu_proof1}, the right-hand side converges to $\frac{e^c\lambda}{\Gamma(1-\alpha)c^{\alpha}-\lambda}$ as $t\to\infty$, the proof of Lemma \ref{exp_mom_nu_lemma} is complete.
\end{proof}
The following slightly strengthened version of Potter's bound (Theorem 1.5.6 in \cite{BGT}) takes advantage of additional monotonicity.
\begin{lemma}\label{potter_bound2}
Let $f:[0,\infty)\to(0,\infty)$ be a nonincreasing function which is regularly varying at $\infty$ of index $-\rho<0$. Then, for any chosen $\gamma\in(0,\rho)$ and $x_0>0$, there exist $t_0>0$ and $c>0$ such that
\begin{equation}\label{potter_bound2_ineq}
f(y)/f(x)\geq c(x/y)^{\rho-\gamma}
\end{equation}
for all $x\geq t_0$ and all $y\in[x_0,x]$.
\end{lemma}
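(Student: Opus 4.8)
The plan is to combine the classical Potter bound --- which controls $f(y)/f(x)$ when both $x$ and $y$ are large --- with the monotonicity of $f$, which takes care of the remaining regime in which $y$ merely stays in the fixed compact set $[x_0,X]$. Throughout, $\gamma\in(0,\rho)$ and $x_0>0$ are the quantities fixed in the statement, and note that $\rho-\gamma>0$.

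First I would invoke Potter's theorem (Theorem 1.5.6 in \cite{BGT}) for $f$, which is regularly varying at $\infty$ of index $-\rho$: with the chosen $\gamma$ there exist $X>0$ (which, enlarging it if necessary, we may assume satisfies $X\ge x_0$) and $A\ge 1$ such that $f(x)/f(y)\le A\,(x/y)^{\gamma-\rho}$ whenever $X\le y\le x$, equivalently $f(y)/f(x)\ge A^{-1}(x/y)^{\rho-\gamma}$ on that set. This already yields \eqref{potter_bound2_ineq} with constant $A^{-1}$ for $X\le y\le x$.

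Next I would deal with $x_0\le y\le X$. Put $g(t):=t^{\rho-\gamma}f(t)$; since $f$ is regularly varying of index $-\rho$ and $\rho-\gamma>0$, the function $g$ is regularly varying at $\infty$ of the \emph{negative} index $-\gamma$, so $g(t)\to 0$ as $t\to\infty$, and since $g$ is also locally bounded (a monotone function times a continuous power), $C_1:=\sup_{t\ge X}g(t)<\infty$. On the other hand, because $f$ is nonincreasing and $\rho-\gamma>0$, for $x_0\le y\le X$ we have the crude lower bound $g(y)=y^{\rho-\gamma}f(y)\ge x_0^{\rho-\gamma}f(X)=:C_2>0$. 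Writing $f(y)/f(x)=\big(g(y)/g(x)\big)(x/y)^{\rho-\gamma}$ then gives $f(y)/f(x)\ge (C_2/C_1)\,(x/y)^{\rho-\gamma}$ for all $x\ge X$ and all $y\in[x_0,X]$.

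Finally I would glue the two cases together. Take $t_0:=X$ and $c:=\min\{A^{-1},\,C_2/C_1\}>0$. For $x\ge t_0$ and $y\in[x_0,x]$, either $y\ge X$ (and then $x\ge t_0=X$, so the Potter estimate applies) or $y\in[x_0,X)$ (so the compact-range estimate applies), and in both cases \eqref{potter_bound2_ineq} follows. The argument is elementary; the only points that need care are getting the orientation of the inequality in Potter's theorem right and the observation that $t^{\rho-\gamma}f(t)$, being regularly varying of a negative index, is bounded for large $t$ --- it is precisely this boundedness, paired with the trivial monotonicity bound $g(y)\ge x_0^{\rho-\gamma}f(X)$, that lets the small-$y$ range be absorbed into the same inequality, so there is no serious obstacle beyond this bookkeeping.
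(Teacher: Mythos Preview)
Your proof is correct and follows essentially the same two-regime strategy as the paper: Potter's bound when $y\ge X$, monotonicity of $f$ when $y\in[x_0,X)$, then take the minimum of the two constants. The only cosmetic difference is in the small-$y$ step: the paper, instead of introducing $g(t)=t^{\rho-\gamma}f(t)$ and bounding $\sup_{t\ge X}g(t)$, simply chains $f(y)/f(x)\ge f(t_0)/f(x)\ge c_1(x/t_0)^{\rho-\gamma}\ge c_1(x_0/t_0)^{\rho-\gamma}(x/y)^{\rho-\gamma}$, i.e.\ it reuses the Potter estimate at the fixed boundary point $t_0$ rather than appealing to the negative-index regular variation of $g$.
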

\begin{proof}
Fix $\gamma\in (0,\rho)$, $x_0>0$ and $c_1>0$. By Potter's bound, there exists $t_0> x_0$ such that
$$
f(y)/f(x)\geq c_1(x/y)^{\rho-\gamma}
$$
for all $x\geq y\geq t_0$.
On the other hand, monotonicity of $f$ entails
$$
f(y)/f(x)\geq f(t_0)/f(x)\geq c_1(x/t_0)^{\rho-\gamma}\geq c_1(x_0/t_0)^{\rho-\gamma}(x/y)^{\rho-\gamma}
$$
for $x\geq t_0$ and $y\in[x_0,t_0)$, and \eqref{potter_bound2_ineq} follows upon setting $c:=c_1(x_0/t_0)^{\rho-\gamma}$.
\end{proof}

\begin{proof}[Proof of Proposition \ref{renewal_modulus_of_continuity}]
Since $a(t)=\mmp\{\xi>t\}$ is regularly varying, we can assume that $T=1$. We start by noting that (see Figure~\ref{square_division})
\begin{eqnarray*}
&&\hspace{-1.5cm}\sup_{(u,v)\in A_t}\frac{a(t)(\nu(ut)-\nu(vt))}{(u-v)^{\alpha-\delta}}\leq \sup_{1/t\leq h\leq 1}\sup_{0\leq u\leq 1}\frac{a(t)(\nu(ut)-\nu((u-h)t))}{h^{\alpha-\delta}}\\
&&\hspace{-1cm}\leq \sup_{j=1,\ldots,\lceil\log_2 t\rceil}\sup_{2^{-j}\leq h\leq 2^{-j+1}}\sup_{k=1,\ldots,2^{j-1}}\sup_{(k-1)2^{-j+1}\leq u\leq k2^{-j+1}}\frac{a(t)(\nu(ut)-\nu((u-h)t))}{h^{\alpha-\delta}}\\
&&\hspace{-1cm}\leq \sup_{j=1,\ldots,\lceil\log_2
t\rceil}\sup_{k=1,\ldots,2^{j-1}}\frac{a(t)(\nu(tk2^{-j+1})-\nu(t((k-2)2^{-j+1})))}{2^{-j(\alpha-\delta)}}
\end{eqnarray*}
having utilized a.s.\ monotonicity of $(\nu(t))_{t\geq 0}$ for the
last inequality. Here, $\lceil\cdot\rceil$ denotes the ceiling
function. 

\begin{figure}
\begin{center}
\includegraphics[scale=0.4]{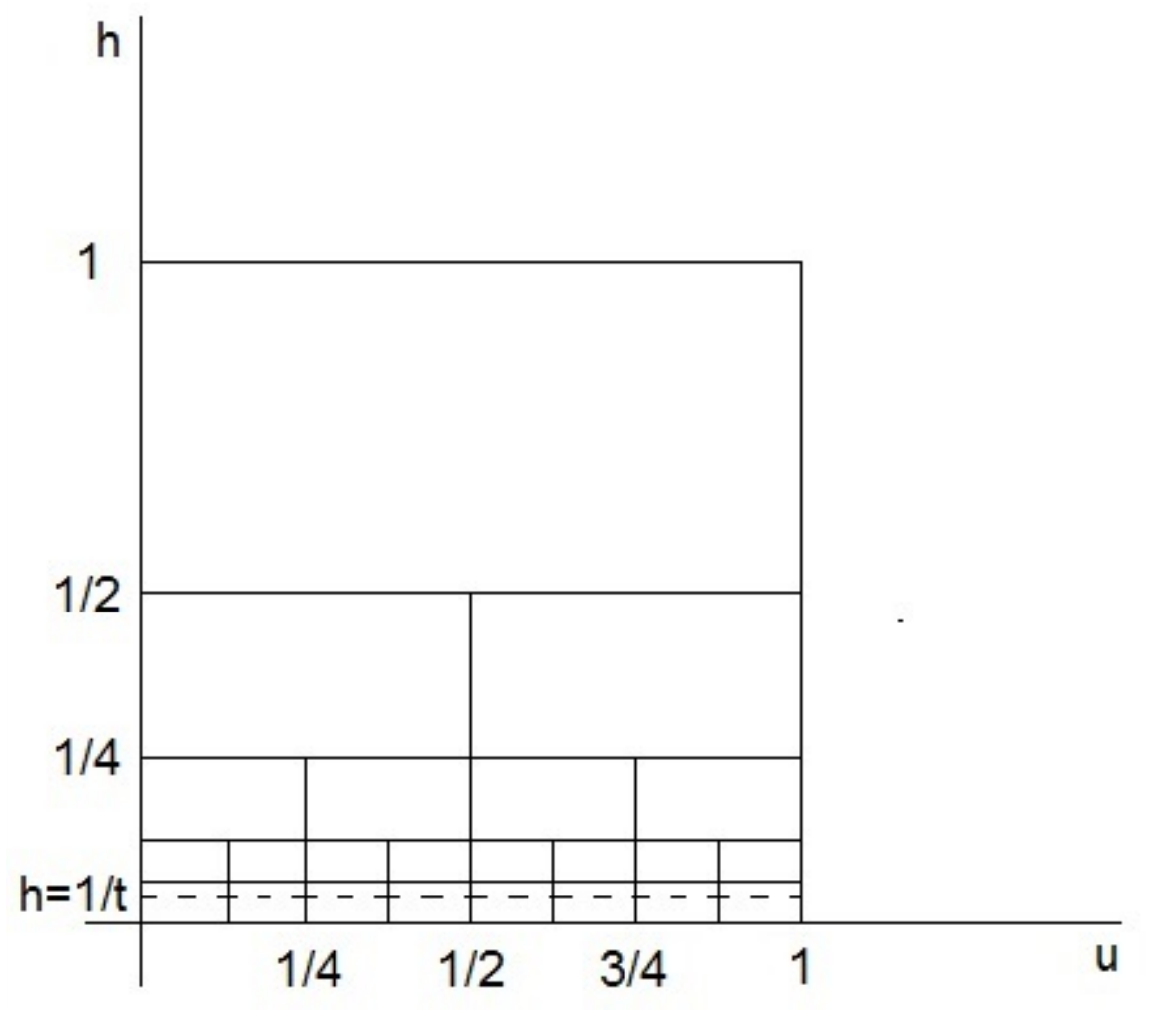}
\end{center}
\caption{Square division in the proof of Proposition \ref{renewal_modulus_of_continuity}}
\label{square_division}
\end{figure}

An application of Boole's inequality yields
\begin{eqnarray*}
&&\mmp\left\{\sup_{(u,v)\in A_t}\frac{a(t)(\nu(ut)-\nu(vt))}{(u-v)^{\alpha-\delta}} > x\right\}\\
&&\leq\sum_{j=1}^{\lceil\log_2 t\rceil}\sum_{k=1}^{2^{j-1}}\mmp\left\{\frac{a(t)(\nu(tk2^{-j+1})-\nu(t((k-2)2^{-j+1})))}{2^{-j(\alpha-\delta)}} > x\right\}.
\end{eqnarray*}
By distributional subadditivity (see formula (5.7) on p.~58 in
\cite{Gut:2009}) of $(\nu(t))_{t\geq 0}$ (for $k\geq 3$) and by
monotonicity of $(\nu(t))_{t\geq 0}$ (for $k=1,2$)
\begin{eqnarray*}
\mmp\left\{\frac{a(t)(\nu(tk2^{-j+1})-\nu(t((k-2)2^{-j+1})))}{2^{-j(\alpha-\delta)}}> x\right\}\leq \mmp\{a(t)\nu(t2^{-j+2})> x2^{-j(\alpha-\delta)}\}
\end{eqnarray*}
whence
\begin{eqnarray*}
&&\hspace{-1cm}\mmp\left\{\sup_{(u,v)\in A_t}\frac{a(t)(\nu(ut)-\nu(vt))}{(u-v)^{\alpha-\delta}}> x\right\}\leq \sum_{j=1}^{\lceil\log_2 t\rceil}2^{j-1}\mmp\{a(t)\nu(t2^{-j+2})> x2^{-j(\alpha-\delta)}\}\\
&&=\sum_{j=1}^{\lceil\log_2 t\rceil}2^{j-1}\mmp\{\exp(a(t2^{-j+2})\nu(t2^{-j+2}))> \exp(x2^{-j(\alpha-\delta)}a(t2^{-j+2})/a(t))\}\\
&&\leq \sum_{j=1}^{\lceil\log_2 t\rceil}2^{j-1}\exp(-x2^{-j(\alpha-\delta)}a(t2^{-j+2})/a(t)) \, \me \exp(a(t2^{-j+2})\nu(t2^{-j+2}))\\
&&\leq C \sum_{j=1}^{\lceil\log_2 t\rceil}2^{j-1}\exp(-x2^{-j(\alpha-\delta)}a(t2^{-j+2})/a(t)),
\end{eqnarray*}
where the penultimate line is a consequence of Markov's inequality, and the boundedness of $\me \exp(a(t2^{-j+2})\nu(t2^{-j+2}))$ follows from Lemma \ref{exp_mom_nu_lemma}. Applying Lemma \ref{potter_bound2} to the function $a$ we obtain
$$
a(t2^{-j+2})/a(t)\geq c2^{(j-2)(\alpha-\delta/2)}
$$
for some $c>0$, all $t>0$ large enough and all $j=2,\ldots,\lceil\log_2 t\rceil$. Hence,
$$
\limsup_{t\to\infty}\mmp\left\{\sup_{(u,v)\in
A_t}\frac{a(t)(\nu(ut)-\nu(vt))}{(u-v)^{\alpha-\delta}}>
x\right\}\leq C\bigg(\exp(-x2^{\delta- \alpha})+ \sum_{j\geq
2}2^{j-1}\exp(-c_1x2^{\delta j/2})\bigg),
$$
where $c_1:=c2^{\delta-2\alpha}>0$. The last series converges uniformly in $x\in [1,\infty)$. Sending $x\to\infty$ finishes the proof.
\end{proof}

\vspace{1cm} \noindent   {\bf Acknowledgments}  \quad
\footnotesize A part of this work was done while A.~Iksanov was
visiting University of M\"{u}nster in January--February 2016. He
gratefully acknowledges hospitality and the financial support by
DFG SFB 878 ``Geometry, Groups and Actions''. The work of
A.~Marynych was supported by the Alexander von Humboldt
Foundation.

\normalsize

\end{document}